\documentclass[11pt]{article}



\usepackage[a4paper]{geometry}
\geometry{left={2cm}, right={2cm}, top={2cm}, bottom={2cm}}

\parskip   1ex







\usepackage{soul}  



\newcommand{\bdelta}{\boldsymbol{\rho}}
\newcommand{\brho}{\boldsymbol{\rho}}
\newcommand{\bsigma}{\boldsymbol{\sigma}}
\newcommand{\bff}{\mathbf{f}}
\newcommand{\bd}{\mathbf{d}}
\newcommand{\bfe}{\mathbf{e}}
\newcommand{\bb}{\mathbf{b}}

\newcommand{\bu}{\mathbf{u}}
\newcommand{\cS}{\mathcal{S}}
\newcommand{\cG}{\mathcal{G}}
\newcommand{\cP}{\mathcal{P}}

\usepackage{graphicx}
\usepackage{epstopdf}   
\usepackage{amsmath,amsfonts,amsthm,amssymb}
\usepackage{xspace,mathtools,lmodern,microtype}
\usepackage{mathrsfs}
\usepackage{trfsigns}
\usepackage{color}
\usepackage{ifpdf}
\usepackage{cite}
\usepackage{tikz}
\usepackage{xcolor}
\usepackage{float}
\usepackage{caption}
\usepackage{bigints}

\usepackage{lmodern}
\usepackage{psfrag}
\usepackage{pifont}
\usepackage{wrapfig}

\usepackage{titlesec}
\setcounter{secnumdepth}{6}

\usepackage{mwe}
\usepackage{subcaption}

\usepackage[utf8]{inputenc}
\DeclareUnicodeCharacter{200E}{ }

\usepackage{color}
\usepackage[hypertexnames=false]{hyperref}
\usepackage{nameref}
\usepackage{cleveref}

\definecolor{myblue}{rgb}{0,0,0.6}
\hypersetup{colorlinks=true,
linkcolor=myblue,citecolor=myblue,filecolor=myblue,urlcolor=myblue}

\usepackage{abstract}

\usepackage{caption}
\usepackage{listings}
\lstset{language=Matlab}
\lstset{ 
  backgroundcolor=\color{white}, 
  basicstyle=\footnotesize\ttfamily,        
  breakatwhitespace=false,         
  breaklines=true,                 
  extendedchars=true,              
  frame=single,                    
  title=\lstname
}
\captionsetup[lstlisting]{  font={tt} }

\newtheorem{theorem}{Theorem}[]
 
 \newtheorem{definition}[theorem]{Definition}
 \newtheorem{lemma}[theorem]{Lemma}
 \newtheorem{proposition}[theorem]{Proposition}

 \newtheorem{notation}[theorem]{Notation}
\newtheorem{discussion}[theorem]{Discussion}

 \newtheorem{remark}[theorem]{Remark}

\usepackage{algorithm}
\usepackage{algpseudocode}
\usepackage{algpascal}
\usepackage{setspace}

\usepackage{array,multirow}

\newcommand{\cR}{{\mathcal R}}
\newcommand{\ri}{{\rm i}}
\newcommand{\hm}{{m}}
\newcommand{\bm}{\boldsymbol{m}}
\newcommand{\tm}{\widetilde{m}}
\newcommand{\tbm}{\widetilde{\boldsymbol{m}}}
\newcommand{\bp}{\boldsymbol{p}}
\newcommand{\beps}{\boldsymbol{\varepsilon}}
\newcommand{\blambda}{\boldsymbol{\lambda}}

\numberwithin{equation}{section}
\numberwithin{theorem}{section}

\newcommand{\ben}{\begin{enumerate}}
\newcommand{\een}{\end{enumerate}}
\newcommand{\be}{\begin{equation}}
\newcommand{\ee}{\end{equation}}
\newcommand{\bdm}{\begin{displaymath}}
\newcommand{\edm}{\end{displaymath}}
\newcommand{\bea}{\begin{eqnarray}}
\newcommand{\eea}{\end{eqnarray}}

\def\={&=& }

\newcommand{\beq}{\begin{equation}}
\newcommand{\eeq}{\end{equation}}
\newcommand{\beqr}{\begin{eqnarray}}
\newcommand{\eeqr}{\end{eqnarray}}
\newcommand{\bef}{\begin{figure}}
\newcommand{\enf}{\end{figure}}
\newcommand{\bec}{\begin{center}}
\newcommand{\enc}{\end{center}}

\newcommand{\eps}{\varepsilon}

\usetikzlibrary{decorations.pathmorphing}
\usetikzlibrary{decorations.markings}
\usetikzlibrary{arrows,decorations.markings}
\usetikzlibrary{arrows.meta}
\tikzset{snake it/.style={decorate, decoration=snake}}
\usetikzlibrary{positioning,automata}

\tikzstyle{decision} = [diamond, draw, fill=blue!20, 
    text width=4.5em, text badly centered, node distance=3cm, inner sep=0pt]
\tikzstyle{block} = [rectangle, draw, fill=blue!5!white, 
    text width=18em, text centered, rounded corners, minimum height=8em, thick]
    \tikzstyle{blocks} = [rectangle, draw, fill=blue!5!white, 
    text width=8.8em, text centered, rounded corners, minimum height=6.5em, thick]
    \tikzstyle{blockf} = [rectangle, draw, fill=blue!5!white, 
    text width=6em, text centered, rounded corners, minimum height=3em, thick]
     \tikzstyle{blockc} = [rectangle, draw, fill=blue!5!white, 
    text width=18em, text centered, rounded corners, minimum height=8em, thick]
     \tikzstyle{blockm} = [rectangle, draw, fill=blue!5!white, 
    text width=18em, text centered, rounded corners, minimum height=8em, thick]
    \tikzstyle{blockr} = [rectangle, draw, fill=red!10!white, 
    text width=10em, text centered, rounded corners, minimum height=4em, thick]
    \tikzstyle{smallblock} = [rectangle, draw, fill=blue!5!white, 
    text width=18em, text centered, rounded corners, minimum height=8em, thick]
\tikzstyle{line} = [draw, -latex']
\tikzstyle{cloud} = [draw, ellipse,fill=red!20, node distance=3cm,
    minimum height=2em]

     \tikzstyle{blocksred} = [rectangle, draw, fill=red!5!white, 
    text width=8.8em, text centered, rounded corners, minimum height=6.5em, thick]

  \tikzstyle{blocklarge} = [rectangle, draw, fill=blue!5!white, 
    text width=15em, text centered, rounded corners, minimum height=8em, thick]
\tikzstyle{blocklargered} = [rectangle, draw, fill=red!5!white, 
    text width=15em, text centered, rounded corners, minimum height=8em, thick]

 \tikzstyle{blocklargeh} = [rectangle, draw, fill=blue!5!white, 
    text width=11em, text centered, rounded corners, minimum height=6em, thick]
\tikzstyle{blocklargeredh} = [rectangle, draw, fill=red!5!white, 
    text width=11em, text centered, rounded corners, minimum height=6em, thick]  
  
\usetikzlibrary{automata,positioning}
\usepackage[export]{adjustbox}

\usepackage{wrapfig}

\title{{Optimising seismic imaging design parameters via bilevel
learning}}
\author{Shaunagh Downing,    Silvia Gazzola,  Ivan G. Graham,  Euan ~A.~Spence  \\[1ex]
  \small{\tt S.Downing@bath.ac.uk, S.Gazzola@bath.ac.uk, 
     I.G.Graham@bath.ac.uk, E.A.Spence@bath.ac.uk}\\[1ex]
  Department of Mathematical Sciences, University of Bath, Bath, BA2 7AY, UK
}

\date{
    \today
}

\begin{document}

\maketitle

\noindent
\begin{abstract}
  Full Waveform Inversion (FWI) is a standard algorithm in seismic imaging. {It solves the inverse problem of computing} a model of the physical properties of  the earth's
  subsurface by minimising the misfit between actual  measurements of scattered seismic waves
  and numerical predictions of these, with the latter obtained  by solving the (forward) wave equation.
The   implementation of FWI requires the a priori choice of a number of ``design parameters'', such as the positions of sensors for the actual measurements and one (or more) regularisation weights.
In this paper we describe a  novel 
algorithm for determining these design parameters automatically from a set of training images, using a (supervised) bilevel learning approach. In our algorithm, the upper level objective function measures the quality of the reconstructions of the training images,  where  the reconstructions are obtained by solving  the lower level optimisation problem  -- in this case FWI. Our algorithm employs  (variants of)  the BFGS quasi-Newton method to perform the optimisation at each level,   and thus requires the repeated solution of the forward problem -- here taken to be the Helmholtz equation.
 This paper focuses on the implementation of the  algorithm. {The novel contributions are: (i)  an adjoint-state method for the efficient computation of the upper-level gradient; (ii) a complexity analysis for the bilevel algorithm, which counts the number of  Helmholtz solves needed and shows this number is independent of the number of design parameters optimised;  (iii) an effective  preconditioning strategy for iteratively
solving
the linear systems required at each step of the bilevel algorithm; (iv) a smoothed extraction process for point values of the discretised wavefield, necessary for ensuring a smooth upper level objective function.
The algorithm also uses an extension to the
bilevel setting of  classical frequency-continuation strategies, helping avoid convergence to  spurious stationary points. } 
The advantage of our algorithm is demonstrated on a problem derived from the standard Marmousi test problem.
\end{abstract}

\section{Introduction}

\textit{Seismic Imaging} is the process of  computing  a structural image of the interior of a body (e.g., a part of the earth's subsurface),  from \textit{seismic data} (i.e., measurements of artificially-generated waves that have propagated within it).  Seismic imaging is used widely 
in  searching  for mineral deposits 
and  archaeological sites underground, or to acquire geological information; see, e.g. \cite[Chapter 14]{sheriff1995exploration}. 
The generation of   seismic data
typically requires fixing a configuration of \textit{sources} (to generate the waves)
and \textit{sensors} (to measure the scattered field).  
The subsurface properties  being reconstructed   (often called \textit{model parameters}) can include  density, velocity or moduli of elasticity.  
There are various reconstruction methods  -- the method we choose for this paper is Full Waveform Inversion (FWI).  FWI is widely used in geophysics and has been applied also  in  $\mathrm{CO}_2$ sequestration (e.g., \cite{brown2009monitoring}) and  in medical imaging (e.g., \cite{guasch2020full, lucka2021high}).

\smallskip 

\noindent  {\bf Motivation for application of  bilevel learning.}\  
The quality of the result produced by FWI depends on the choice of  various \textit{design parameters}, 
 related both to the experimental
 set-up (e.g. placement of  sources or sensors) and to the design of the inversion algorithm  (e.g., the choice of FWI objective function, regularization strategy, etc.). In  exploration seismology, careful planning of seismic surveys is essential in achieving cost-effective acquisition and processing, as well as high quality data. Therefore in this paper we explore the potential for optimisation of these design parameters using a bilevel learning  approach, driven by a set of pre-chosen  `training models'. 
 Although there are many design parameters that could be optimised, we  restrict attention here to 
 `optimal' sensor placement and `optimal' choice of regularisation weight. 
However the general principles of our approach  apply more broadly. 
  
  There are many applications of FWI where  optimal sensor placement can be important.
  For example,  carbon sequestration  involves first characterising a candidate site via seismic imaging,
  and then monitoring the site over a number of years to ensure that the storage is performing
  effectively and safely. 
  Here 
  accuracy of images is imperative and optimisation of sensor locations could be very useful in
  the long term.
  The  topic of the present paper is thus of practical interest, but also fits with the growth of contemporary 
  interest in bilevel learning in other areas of  inverse problems. A related  approach has  been used to learn sampling patterns for MRI \cite{sherry2020learning}. Reviews of bilevel optimisation in general can be found, for example,  in \cite{crockett2021bilevel, dempe2018bilevel, DeZe:20}.

  The
  application of bilevel learning in seismic imaging does not appear to have had much attention  in the literature. An exception is \cite{haber2003learning}, where the regularisation functional is optimised using a supervised learning approach on the upper-level and the
  model is reconstructed on the lower-level using a simpler linear forward operator (see \cite[Equation 12 and Section 4]{haber2003learning}).  To our knowledge, the current paper is the first  to study bilevel learning in the context of FWI.

   We optimise the design parameters by exploiting \textit{prior information} in the
form of \textit{training models}. 
In practical situations,  such prior information may be available due to 2D surveys, previous drilling or exploratory wells. {Methods for deriving ground truth models from seismic data are outlined in \cite{jones2012building}. }

We use these training models  to \textit{learn} `optimal' design parameters, meaning  that these give the best reconstructions of the training models, {according to some quality measure,} over all   possible choices of design parameters.  In the
bilevel learning framework, the upper level objective involves the misfit between the training models and their reconstructions, obtained  via FWI,  and the lower level is  FWI itself.

\smallskip

{Although different to  the bilevel learing approach considered here, the application  of more general experimental design techniques  in  the context of FWI has a much wider literature.  For example,  in \cite{HuDe:13} an improved method for detecting steep subsurface structures,  based on seismic interferometry,  is described.
  In \cite{LoMoTe:15} the problem of identification of position and properties of a seismic source in
  an elastodynamic model is considered, where the number of parameters to be identified is small relative to the number of observations. The problem is formulated in a  Bayesian setting, seeking the posterior probability
  distributions of the unknown parameters, given prior information.   An algorithm for computing the
  optimal number and positions of receivers is presented, based on maximising the expected  information gain in the solution of the inverse problem.  
  In \cite{MaNuetal:17} the  general tools of Optimised Experimental Design are reviewed and
  applied to optimse the benefit/cost ratio in various applications of FWI, including examples in seismology and medical tomography.   Emphasis is placed on determining experimental designs which maximise the size of the resolved model space, based on an analysis of the spectrum of the approximate Hessian of the model to observation map.
  The cost of implementing such design techniques is further addressed in the more recent paper
  \cite{KrEdMa:21}, where optimising the spectral properties is replaced by a  goodness measure based on the determinant of the approximate Hessian.    }


  \smallskip
  
 \noindent{\bf Contribution of this Paper.}\  
 This paper formulates a bilevel learning problem for optimising design parameters in FWI,
 and proposes a solution  using  quasi-Newton methods at both upper and lower level.
We derive a novel formula for the gradient of the upper level objective function with respect to the design parameters, and analyse the complexity of the resulting algorithm 
in terms of the  number of forward
solves needed. In this paper we work in the frequency domain, so  the forward problem is the Helmholtz equation.
The efficient running of the algorithm depends on several implementation techniques:~a  bilevel frequency-continuation technique, which helps avoid stagnation in spurious stationary points, and a novel extraction process, which ensures that the computed wavefield is a smooth function of sensor  position,
irrespective of the numerical grid used to solve the Helmholtz problem.  
Since the gradient of the upper level objective function involves the inverse of the Hessian of the lower-level objective function,
Hessian systems have to be solved at each step of the bilevel method; we present an effective   preconditioning technique for solving these systems via Krylov methods.

{While the number of Helmholtz solves required by the bilevel algorithm can be
substantial, we emphasise that the learning process  should be regarded as an off-line phase in the reconstruction process, i.e., it is computed once and then the output (sensor position, regularisation parameter) is used in subsequent data acquisition and associated standard FWI computations.}

Finally, we apply our novel bilevel algorithm to an inverse problem involving the reconstruction of a smoothed version of the Marmousi model. These  show that the  design parameters  obtained using  the  bilevel  algorithm provide better  FWI reconstructions (on test problems lying outside the space of training models) than those reconstructions  obtained with {\em a priori} choices of design parameters. A specific list of
contributions of the paper are given as items (i) -- (iv) in the abstract. 

\smallskip

\noindent {\bf Outline of paper.} \  In Section \ref{sec:Formulation}, we discuss the formulation of the bilevel problem, while Section \ref{sec:bilevel}  presents our  approach for solving   it, including its  reduction to a single-level problem, the derivation of the upper-level gradient formula,  
and  the   complexity analysis. {Section  \ref{sec:Numerical} presents numerical results for the
  Marmousi-type  test problem followed by a short concluding section. Some implementation details  are given in the  Appendix (Section \ref{app:Details}).}

\section{Formulation of the  Bilevel Problem}
\label{sec:Formulation}

Since one of our ultimate aims is to optimise sensor positions,
we formulate the FWI objective function in terms of  wavefields that solve the continuous  (and not discrete) Helmholtz problem, thus ensuring that the  wavefield depends smoothly on sensor position.
This is different from many papers about FWI,  where the wavefield is postulated to be the solution of  a discrete system; see, e.g., \cite{aghamiry2021full, HeQinglong2020INmb, MetivierLudovic2011A2ni, metivier2017full, van_Leeuwen_2010, operto_2009}. 
  When,   in practice, we work at the discrete level,  special measures are taken to ensure that
  the smoothness with respect to sensor positions is preserved as well as possible -- see  Section 
  \ref{subsec:restrict}. 
  
\subsection{The Wave Equation in the Frequency Domain}
While the FWI problem can be formulated using a  forward problem defined by any wave equation in either the time or frequency domain, we focus here  on the acoustic wave equation in the frequency domain, i.e.,  the Helmholtz equation. 
We  study this in a bounded  domain {$\Omega\subset \mathbb{R}^d$, $d = 2,3$} with boundary $\partial \Omega$ and with classical impedance first-order absorbing boundary condition (ABC). However it  is straightforward to extend to more general domains and boundary conditions (e.g. with obstacles, {perfectly-matched layer (PML)}  boundary condition, etc.).       

In this paper the model to be recovered in FWI is taken to be the `squared-slowness' (i.e.,  the inverse of the velocity squared),   specified by a vector of parameters  $\bm=(m_1,\dots,m_M) \in \mathbb{R}^M_+\linebreak[4]:= \{ \bm \in \mathbb{R}^M: m_k > 0, \, k = 1, \ldots, M\}$. We  assume that  $\bm$
determines a function on the domain $\Omega$ through a relationship of the form:  
 \begin{align} \label{disc_to_cont} m(x)=\sum_{k=1}^M m_k \beta_k(x),\end{align}
 for some basis functions $\{\beta_k\}$, assumed to have local support in $\Omega$. For example, we may choose
 $\beta_k$ as nodal finite element basis functions with respect to a mesh defined on $\Omega$ and then $\bm$ contains the nodal values.
 The simplest case is where {$\Omega \subset \mathbb{R}^2$} is a rectangle, discretised by a uniform {rectangular grid (subdivided into triangles)}  and $\beta_k$ are the continuous piecewise linear basis, is used in the experiments in this paper.
 
\begin{definition}[Solution operator and its adjoint]
  \label{def:solnop}
  For a given model $\bm$ and frequency $\omega$, 
  we define  the solution operator  $\mathscr{S}_{\bm, \omega}$ 
  by requiring 
  \begin{align}  
    \left(\begin{array}{l}u\\ u_b\end{array}\right) = {\mathscr{S}}_{\bm, \omega} \left(\begin{array}{l}f\\ f_b\end{array}\right) \quad \iff \quad \left\{ \begin{array}{rl} - (\Delta + \omega^2 \hm ) u & = f \quad \text{on} \quad \Omega ,  \\ (\partial  /\partial n - \ri \sqrt{m} \omega ) u & = f_b \quad \text{on} \quad \partial \Omega , \\
                                                                                                                                                         u_b & = u\vert_{\partial \Omega} \end{array} \right. , 
\label{forward}
  \end{align}
  for all $(f, f_b)^\top  \in L^2(\Omega) \times L^2(\partial \Omega)$ (where $L^2$ denotes square-integrable functions).  We also define the adjoint solution operator $\mathscr{S}^*_{\bm, \omega}$ by 
 \begin{align}  
       \left(\begin{array}{r}v\\ v_b\end{array}\right) = \mathscr{S}^*_{\bm, \omega}  \left(\begin{array}{l}g\\ g_b\end{array}\right) \quad \iff \quad \left\{ \begin{array}{rl} - (\Delta + \omega^2 \hm ) v & = g \quad \text{on} \quad \Omega \\ (\partial  /\partial n + \ri \sqrt{m} \omega ) v & = g_b \quad \text{on} \quad \partial \Omega, \\
v_b &= v\vert_{\partial \Omega}                                                                                                              \end{array} \right. ,
 \label{adjoint}
 \end{align}
 for all $(g, g_b)^\top \in L^2(\Omega) \times L^2(\partial \Omega)$.
\end{definition} 

  \begin{remark}\label{rem:firstarg}
 (i)  The solution operator $\mathscr{S}_{\bm, \omega}$ returns a vector with   two components, one being the solution of a Helmholtz problem on the domain $\Omega$ and
    the other being its restriction to  the boundary $\partial \Omega$.
    Pre-multiplication of this vector with the {(row)} vector $(1,0)$  just returns the solution on the domain.

    (ii)  When $\Omega$ has a Lipschitz boundary the solution operators are well-understood mathematically, and it can be shown that they both map   $L^2(\Omega) \times L^2(\partial \Omega)$ to the Sobolev space  $H^1(\Omega) \times H^{1/2}(\partial \Omega)$, but we do not need that theory here. \end{remark}

    We denote the $L^2$ inner products on $\Omega$ and $\partial \Omega$ by
    $( \cdot , \cdot)_{\Omega}$, $( \cdot , \cdot)_{\partial \Omega}$ and also 
    introduce the inner product on the product  space:
    \begin{align*}
\left(\left(\begin{array}{l}f\\ f_b\end{array}\right),  \left(\begin{array}{l}g\\ g_b\end{array}\right)\right)_{\Omega \times \partial \Omega} = (f,g)_{\Omega} + (f_b, g_b)_{\partial \Omega}. 
\end{align*}
Then,  integrating by parts twice (i.e., using Green's identity), one can easily obtain the property:
\begin{align} \label{Green}
\left(\mathscr{S}_{\bm, \omega} \left(\begin{array}{l}f\\ f_b\end{array}\right),  \left(\begin{array}{l}g\\ g_b\end{array}\right)\right)_{\Omega \times \partial \Omega} = \left(\left(\begin{array}{l}f\\ f_b\end{array}\right),  \mathscr{S}^*_{\bm, \omega} \left(\begin{array}{l}g\\ g_b\end{array}\right)\right)_{\Omega \times \partial \Omega} . 
  \end{align}

  Considerable simplifications could be obtained by assuming that  $m$ is constant on $\partial \Omega$; this is a natural assumption used  in theoretically justifying the absorbing boundary
    condition in  \eqref{forward} and \eqref{adjoint},  or for  more sophisticated ABCs such as a PML.
    However in some of the literature (e.g. \cite[Section 6.2]{VaHe:16})  $m$ is allowed to vary on $\partial \Omega$, leading to problems that depend nonlinearly on $m$ on $\partial \Omega$, as in \eqref{forward} and \eqref{adjoint}; we therefore 
    cover
  the most general case here.  
  
 We consider below wavefields generated by sources; i.e., solutions of the Helmholtz equation with the right-hand side a delta function.
  
  \begin{definition}[The delta function and its derivative]\label{def:part_delta}
    For any  point $r \in \Omega$ we define the delta function $\delta_r$ by
    $$(f, \delta_r) = f(r),$$
for all $f$ continuous in a neighbourhood of $r$.      Then, for $l = 1, \ldots , d$, we define the generalised function $\frac{\partial}{\partial x_l}(\delta_r)$ by 
 \begin{align*}
   \left(f, \frac{\partial}{\partial x_l}(\delta_r) \right)  :=  - \left(\frac{\partial f}{\partial x_l}, \delta_r \right), 
 \end{align*}
 for all $f$ continuously differentiable in a neighbourhood of $r$. \end{definition}

\subsection{The lower-level problem}
The lower-level objective  of our bilevel problem is the classical FWI objective function:
\begin{align}
\phi(\bm, \cP, \alpha)= \frac12 \sum_{s \in \mathcal{S}} \sum_{\omega \in \mathcal{W}} \Vert \beps(\bm,\cP, \omega, s) \Vert_2^2   + \frac{1}{2} \bm^\top \Gamma(\alpha,\mu) \bm . 
\label{phi}
\end{align}
{In the notation for $\phi$,}  we distinguish three {of its}  independent variables:   (i)  $\bm \in \mathbb{R}^M$ denotes the model (and the lower level problem  consists of   minimising $\phi$ over all such $\bm$); (ii) $\cP = \{p_j: j = 1, \ldots, N_r\}$  denotes the set of $N_r$ sensor positions, with each $p_j \in \Omega$,  and  (iii) $\alpha$ is a regularisation parameter.  
In \eqref{phi},  {$\phi$ also depends on other parameters but we do not list these as independent variables}:     $ \mathcal{S}$ is a finite set of  source positions, $ \mathcal{W}$ is a finite set of  frequencies, $\Gamma(\alpha,\mu)$ is a real symmetric positive semi-definite regularisation matrix (to be defined below). {We assume throughout that sensors cannot coincide with sources.} Moreover, $\Vert \cdot \Vert_2$ denotes  the usual Euclidean norm on $\mathbb{C}^{N_r}$ and  $\beps\in \mathbb{C}^{N_r}$ is the vector of    ``data misfits'' at  the $N_r$ sensors, defined by    
 \begin{align} 
\label{resd}
   \beps(\bm, \bp,\omega, s)  = \textbf{d}(\bp,\omega,s) - \mathcal{R}(\cP) u(\bm,\omega, s) \in \mathbb{C}^{N_r}, 
   \end{align} 
where $\textbf{d} \in \mathbb{C}^{N_r}$ is the data,  
$u(\bm, \omega,s)$ is the wavefield obtained by solving the Helmholtz equation with model $\bm$, frequency $\omega$, source $s$ and zero impedance data, i.e.,
\begin{align} \label{defu}  u(\bm, \omega,s) = \mathscr{S}_{\bm,\omega}\left(\begin{array}{l}\delta_s\\0 \end{array} \right), \end{align}
and $\mathcal{R}(\cP)$ is the restriction operator, which  evaluates  the wavefield at sensor positions, i.e., 
 \begin{align}
   \mathcal{R}(\cP) u &= \left[ u(p_{1}), u(p_{2}), \ldots, u(p_{N_r}) \right]^\top  =  \left[ (u, \delta_{p_1}) , (u, \delta_{p_{2}}), \ldots, (u, \delta_{p_{N_r}}) \right]^\top  \in \mathbb{C}^{N_r}. 
         \label{Rv}
 \end{align}
We also need the adjoint operator $\mathcal{R}(\cP)^*$ defined by  
      \begin{align}
       \mathcal{R}(\cP)^* \boldsymbol{z} = \sum_{j = 1}^{N_r} \delta _{p_j} z_j, \quad \text{for} \quad \boldsymbol{z} \in \mathbb{C}^{N_r} .
         \label{Rstar}
      \end{align}
      It is then easy to see that, with $\langle \cdot, \cdot \rangle$ denoting the Euclidean inner product on $\mathbb{C}^{N_r}$,
      \begin{align}  
      \langle \mathcal{R}(\cP)u , \boldsymbol{z} \rangle = (u, \mathcal{R}(\cP)^* \boldsymbol{z})_\Omega .
      \label{I1} 
      \end{align} 

       \paragraph*{Regularisation}
       The general form of the regularisation matrix in  \eqref{phi}  is 
       \begin{align} \Gamma(\alpha, \mu) = \alpha \mathsf{R} + \mu I \label{Gamma}
       \end{align}
       where $I$ is the $M \times M$ identity,  $\mathsf{R}$ is an $M \times M$ real  positive semidefinite matrix that approximates the action of the negative Laplacian on the model space and $\alpha, \mu$ are positive parameters to be chosen.
       In the computations in this paper,  $\Omega\subset \mathbb{R}^2$  is a rectangular domain discretised by a rectangular grid with $n_1$ nodes in the horizontal ($x$) direction and $n_2$ nodes in the vertical ($z$) direction, in  which case  we  make the particular choice
       \begin{align} \label{ThisGamma}
         \mathsf{R}  = D_x^T D_x + D_z ^T D_z,
       \end{align}
       where $D_x:=D_{n_1} \otimes I_{n_2} $,  $D_z:= I_{n_1} \otimes D_{n_2}$,
          $\otimes$ is the  Kronecker product and $D_n$ is  the  difference matrix
       \begin{align*} 
D_n&= (n-1)\left(  \begin{matrix}
1 & -1 &  & & & &\text{\rm \large 0}\\
  & 1 & -1 & & & &\\
  &  & \ddots & \ddots & & & &\\
  & & & & 1& -1 &\\
 \text{\rm \large 0} & & & & & 1 &-1
 \end{matrix}
                                   \right) \in \mathbb{R}^{\left(n-1 \right) \times n},
\end{align*}

For general domains and discretisations, the relation \eqref{disc_to_cont} could be exploited to provide the matrix $\mathsf{R} $  by defining $\mathsf{R}_{k,k'}  = \int_\Omega \nabla \beta_k \cdot \nabla \beta_{k'}$, so that $\bm^T \mathsf{R}  \bm = \Vert \nabla m
\Vert_{L^2(\Omega)}^2$.  

In this paper,  $\alpha$ and   (some of) the coordinates  of the points in $\cP$  are designated {\em design parameters}, to  be found by optimising the upper level objective   $\psi$  (defined  below).
We  also tested algorithms that included $\mu$ in  the list of design parameters, 
but these failed to substantially improve the reconstruction of $\bm$.
However  the choice of a small fixed   $\mu>0$ (typically of the order of $10^{-6}$)  ensured the stability of the algorithm in practice. Nevertheless,   $\mu$ plays an important role in the theory, {since large enough $\mu$
ensures the positive-definiteness of the Hessian of $\phi$ and hence strict convexity of $\phi$.}
The inclusion of the term $\mu \Vert \bm \Vert_2^2$ in the regulariser
typically appears in FWI
theory and practice,  
 sometimes  in the more general form   $\mu ||\bm-\bm_0||_2^2$, where $\bm_0$ is a `prior model'; for example see \cite{asnaashari2013regularized}, \cite[Section 3.2]{tarantola2005inverse} and \cite[Equation 4]{aghamiry2021full}. 

  \subsection{Training models and the bilevel problem }
  \label{subsec:training}

  The main purpose of this paper is to show that, given a carefully chosen set $\mathcal{M}'$ of training models, one can learn good  choices of design parameters, which then provide an FWI algorithm with enhanced performance in more general applications.
  The good  design parameters are found by minimising the misfit between the  ``ground truth''   training
  models  and their FWI reconstructions. Thus,  we  are applying FWI in the special situation
  where the data $\mathbf{d}$ in \eqref{resd} is synthetic,  given by
  $\mathbf{d}(\bm',\omega , s) = \cR(\cP)u(\bm', \omega, s)$,   and so, {for each training model
    $\bm' \in \mathcal{M}'$}, 
    we rewrite
  $\phi$ in \eqref{phi} as:
  \begin{align}
    \phi(\bm, \cP, \alpha, \bm')& = \frac12 \sum_{s \in \mathcal{S}} \sum_{\omega \in \mathcal{W}} \Vert \beps(\bm,\cP, \omega, s, \bm') \Vert_2^2   + \frac{1}{2} \bm^\top \Gamma(\alpha,\mu) \bm ,
                                  \label{newphi}
  \end{align}
     \text{with}        \begin{align}  \beps(\bm, \cP,\omega, s, \bm')      & := \mathcal{R}(\cP) (u(\bm',\omega, s) - u(\bm,\omega, s)) , 
\label{resdbi}
                        \end{align}
                        where we have now added $\bm'$ to the independent variables of $\beps$ to emphasise  dependence on $\bm'$.
                        
 Then, letting  $\bm^{\rm FWI}(\cP,\alpha, \bm'$) denote a minimiser (over all models $\bm$) of $\phi$ (given by  \eqref{newphi}, \eqref{resdbi})),
   for each training model $\bm' \in \mathcal{M'}$, sensor position $\cP$ and regularisation  parameter $\alpha$, 
   the upper level objective function is defined to be  
 \begin{align}
\psi(\cP, \alpha)&:=\frac{1}{2N_{m'}}\sum_{ \bm' \in \mathcal{M'}}||\bm'-\bm^{\rm FWI}(\cP,\alpha,  \bm')||_2^2,
\label{SOobj} 
 \end{align}
 {where $N_{\bm'}$ denotes the number of training models in $\mathcal{M}'$.}
 
\begin{definition}[General bilevel problem] \label{def:bilevel} \ With $\psi$  defined by \eqref{SOobj} and $\phi$  defined by \eqref{newphi}:
\begin{align}
  \rm{Find} \quad  \cP_{\min}, \alpha_{\min} &=\underset{\cP, \alpha}{\mathrm{argmin}} \, \, \, \psi (\cP, \alpha), \label{upper}\\
 \textrm{subject to }\, \, \bm^{\rm FWI}(\cP,\alpha, \bm' ) & \in  \underset{\bm}{\mathrm{argmin}}\,\, \,  \phi (\bm, \cP,\alpha,\bm') \quad \mbox{ for each } \bm' \in \mathcal{M'}\label{lower}
\end{align}
\end{definition}

{Using the theory of the Helmholtz equation  it can be shown that
  the solution $u(\bm,\omega,s)$ depends continuously on $\bm$ in any domain which does not include the sensors
  (details of this are in  \cite[Section 3.4.3]{thesis}).  Since   $\phi$  is non-negative, it follows that}  $\phi$ has at least one minimiser with respect to  $\bm$.     However  since $\phi$ is not necessarily convex {(we do not know in practice if the value of $\mu$ chosen guarantees this)}, the $\mathrm{argmin}$ function in \eqref{lower} is potentially multi-valued.  This leads to an ambiguity in the defintion of $\psi$ in \eqref{upper}. To deal with this, 
  we replace \eqref{lower} by its first-order optimality condition (necessarily satisfied by any minimiser in $\mathbb{R}_+^M$).  While this in itself does not guarantee a unique solution at the lower level, it does  allow us to compute the gradient of $\psi$  with respect to any coordinate of the points in $\cP$ or with respect to   $\alpha$,  {\em under the assumption of uniqueness at the lower level}.
 {Such an approach  for  dealing with the  non-convexity of the lower level problem
is widely used in bilevel learning -- see also sources cited in the recent review \cite[Section 4.2]{crockett2021bilevel} (where it is called `the Minimizer Approach') and \cite{Matthias23, sherry2020learning}. 
Definitions \ref{def:bilevel} and the following Definition \ref{def:reduced} are  equivalent when $\mu$ is sufficiently large.}

\begin{definition}[Reduced  single level problem]\label{def:reduced}   \begin{align}
\textrm{Find} \quad  &\cP_{\min}, \alpha_{\min}=\underset{\cP, \alpha}{\mathrm{argmin}} \,\psi (\cP, \alpha)  \nonumber \\
\mbox{subject to} \quad &\nabla \phi(\bm^{\rm FWI}(\cP,\alpha, \bm'),  \cP, \alpha, \bm')=\mathbf{0} \quad \mbox{for each } \bm' \in \mathcal{M'},
\label{single}
  \end{align}
  where  $\nabla \phi$  denotes the gradient of $\phi$ with respect to  $\bm$. 
\end{definition} 

More generally, the  literature on bilevel optimisation contains several approaches to deal with  the non-convexity of the lower level problem.
For example in \cite[Section II]{sinha}, the  `optimistic' (respectively  `pessimistic') approaches are discussed, which 
means that one fixes  the lower-level minimiser as being one  providing the smallest (respectively largest)    value of  $\psi$. However it is not obvious how to implement  this scheme in practice.  

    We now denote the gradient and Hessian of $\phi$ with respect to $\bm$ by $\nabla \phi$ and $H$ respectively.
     {These are both functions of $\bm, \cP$,  $\alpha$  and $\bm'$ although
     and  $H$ is independent of $\bm'$,  so we write $\phi = \phi(\bm, \cP, \alpha, \bm')$ and  $H=H(\bm, \cP, \alpha)$.}
     An  explicit  formula for $\nabla \phi$ is given in the following proposition.
     It involves the operator
       $\cG_{\bm, \omega}$ on $L^2(\Omega)\times  L^2(\partial \Omega)$ defined  by
     \begin{align*} 
     \cG_{\bm, \omega} \left(\begin{array}{l} v\\ v_b\end{array} \right)  := \left(\begin{array}{l}\omega^2 v \\ \frac{\ri \omega}{2} \left(\frac{v_b} {\sqrt{m}} \right)  \vert_{\partial \Omega}\end{array} \right).
     \end{align*} 
   \begin{proposition}[{Derivative of $\phi$ with respect to $m_k$}]\label{prop:elem} 
Let $\Re$ denote the real part of a complex number. Then
     \begin{align}
  \frac{\partial \phi }{\partial m_k}(\bm, \cP, \alpha, \bm') \ & = \ - \Re  \sum_{s \in \mathcal{S}} \sum_{\omega \in \mathcal{W}} \left\langle \mathcal{R}(\cP) \frac{\partial u }{\partial m_k}(\bm,  \omega, s) , \beps(\bm, \cP, \omega, s, \bm') \right\rangle   + \Gamma(\alpha,\mu) \bm,    
                                                                  \label{graddirect_here}\end{align}
    and
    \begin{align}                                                                                                                         
  \left( \begin{array}{l} \frac{\partial u }{\partial m_k}\\\frac{\partial u }{\partial m_k}\vert_{\partial \Omega}\end{array} \right) & =  \, \mathscr{S}_{\bm, \omega}  \cG_{\bm, \omega} \left(\begin{array}{l} \beta_k u(\bm, \omega,s)\\ \beta_k u(\bm, \omega,s)\vert_{\partial \Omega} \end{array} \right) .    \label{oneMgrad2a}\end{align}
\end{proposition}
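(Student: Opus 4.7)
The plan is to derive the two formulas in turn, first the gradient of $\phi$ by direct differentiation, and then the sensitivity formula for $u$ by implicitly differentiating the PDE that defines $u$.

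\textbf{Step 1: formula for $\partial \phi / \partial m_k$.} Since the regulariser $\tfrac12 \bm^\top \Gamma(\alpha,\mu)\bm$ is quadratic in $\bm$ with $\Gamma$ symmetric, its gradient with respect to $\bm$ is $\Gamma(\alpha,\mu)\bm$, whose $k$-th component is what appears in \eqref{graddirect_here}. For the misfit term I would note that in \eqref{resdbi}, $\bm'$ is fixed, so $u(\bm',\omega,s)$ is independent of $\bm$, and only $u(\bm,\omega,s)$ depends on $m_k$. Thus
\begin{align*}
\frac{\partial \beps}{\partial m_k} = - \mathcal{R}(\cP) \frac{\partial u}{\partial m_k}(\bm,\omega,s).
\end{align*}
Using $\tfrac12\|\beps\|_2^2 = \tfrac12 \langle \beps, \beps\rangle$ and the fact that $\partial \|\beps\|_2^2 / \partial m_k = 2\Re \langle \partial \beps/\partial m_k, \beps\rangle$ (because $m_k$ is real and the inner product is sesquilinear), summation over $s$ and $\omega$ produces \eqref{graddirect_here}. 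The only preliminary required here is that $u(\bm,\omega,s)$ depends differentiably on $\bm$, which in turn is needed for Step 2.

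\textbf{Step 2: formula for $\partial u / \partial m_k$.} I would differentiate the defining system \eqref{forward} for $u(\bm,\omega,s) = \mathscr{S}_{\bm,\omega}(\delta_s,0)^\top$ implicitly with respect to $m_k$. Since $m(x) = \sum_{k'} m_{k'}\beta_{k'}(x)$, we have $\partial m/\partial m_k = \beta_k$; the right-hand side $\delta_s$ is independent of $\bm$, and on the boundary $(\partial/\partial n - \ri \sqrt{m}\omega)u = 0$ contributes via $\partial \sqrt{m}/\partial m_k = \beta_k/(2\sqrt{m})$. Differentiating termwise yields
\begin{align*}
-(\Delta + \omega^2 m)\frac{\partial u}{\partial m_k} &= \omega^2 \beta_k u \quad \text{on } \Omega, \\
\Bigl(\frac{\partial}{\partial n} - \ri \sqrt{m}\omega\Bigr)\frac{\partial u}{\partial m_k} &= \frac{\ri\omega}{2\sqrt{m}} \beta_k u \quad \text{on } \partial\Omega.
\end{align*}
Comparing with Definition~\ref{def:solnop} and the definition of $\cG_{\bm,\omega}$, the right-hand side here is precisely $\cG_{\bm,\omega}(\beta_k u, \beta_k u|_{\partial\Omega})^\top$, so applying $\mathscr{S}_{\bm,\omega}$ gives \eqref{oneMgrad2a}.

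\textbf{Main obstacle.} The routine calculations above are straightforward; the delicate point is justifying formal differentiation of $u$ with respect to $m_k$, in particular that $u(\bm,\omega,s)$ is a differentiable function of $\bm$ even though $u$ is a distributional solution (its right-hand side is a delta source). A clean way to handle this is to observe that $u(\bm,\omega,s)$ is characterised as a (distributional) solution away from $s$ and to apply the implicit function theorem to the affine map $\bm \mapsto (-(\Delta+\omega^2 m), (\partial/\partial n - \ri\sqrt{m}\omega))$ acting between appropriate Sobolev spaces on any subdomain not containing $s$; this map is smooth in $\bm$ and has a bounded inverse by the well-posedness of the Helmholtz problem (Remark~\ref{rem:firstarg}(ii)). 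Away from $s$ this gives differentiability of $u$ in $\bm$, and the test functions $\beps$ and $\mathcal{R}(\cP) u$ only sample $u$ at sensor positions, which by assumption are distinct from sources. I would relegate these regularity issues to a reference (the paper points to the author's thesis for continuous dependence) and present the calculation above as the core of the proof.
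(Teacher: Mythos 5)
Your proposal is correct and takes essentially the same route as the paper: the paper's proof is a one-line instruction to differentiate \eqref{newphi} for the first formula and to differentiate \eqref{defu} using \eqref{disc_to_cont} for the second, and your two steps carry out exactly those differentiations (with the boundary term correctly producing the $\tfrac{\ri\omega}{2}\beta_k u/\sqrt{m}$ component of $\cG_{\bm,\omega}$). Your added discussion of why $u$ is differentiable in $\bm$ away from the source goes beyond what the paper records here, but it is consistent with the regularity remarks the paper defers to \cite[Section 3.4.3]{thesis}.
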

\begin{proof}  \eqref{graddirect_here} follows from  differentiating \eqref{newphi}.   \eqref{oneMgrad2a}  is obtained by differentiating \eqref{defu} 
 and using   \eqref{disc_to_cont}. \end{proof} 
 
 \begin{remark} \label{rem:note}In what follows, it is useful to note that, for any $\bsigma=(\sigma_1,\dots,\sigma_M) \in \mathbb{R}^M$,   using 
 the first row of \eqref{oneMgrad2a} and the  linearity of $\mathscr{S}_{\bm, \omega} $ and $\cG_{\bm, \omega}$, we obtain
 \begin{align} \sum_{k=1}^M \sigma_k \frac{\partial u(\bm, \omega,s) }{\partial m_k} = (1,0)\,  \mathscr{S}_{\bm, \omega} \cG_{\bm, \omega} (\sigma u(\bm, \omega,s)),  \quad \text{where} \quad   \sigma := \sum_{k = 1}^M \sigma_k \beta_k.      \label{oneMgrad2b}\end{align}
\end{remark}

\section{Solving the Bilevel Problem}
\label{sec:bilevel}

We apply  a quasi-Newton  method to solve the Reduced   Problem in Definition  \ref{def:reduced}. To implement this,   we need formulae for the derivative  of $\psi$ with respect to (some subset)  of the coordinates $\{p_{j,\ell}: j = 1, \ldots, N_r, \ \ell = 1,\ldots, d\}$
{of the points in $\mathcal{P}$,} as well as the parameter $\alpha$. {In the optimisation we may choose to constrain some of these coordinates, for example if the sensors lie in a well or on the surface.} In deriving these formulae, we use  the fact that  $\psi$ is a $C^1$ function of these variables;  this can be proved (for sufficiently large $\mu$) using the Implicit Function Theorem. {More precisely, the equation \eqref{single} can be thought of as a system of $M$ equations determining $\bm^{\rm FWI}$ as a $C^1$ function of the parameters in $\mathcal{P}$ and/or $\alpha$. The positive definiteness of the Hessian of $\phi$ allows an application of the Implicit Function Theorem to this system. This argument also justifies the formula \eqref{dmdpinter} used below. More details are in   
  \cite[Corollary 3.4.30]{thesis}.}  

  \subsection{Derivative of $\psi$ with respect to position coordinate  $p_{j,\ell}$  } 
\label{sect:gradient}

 The formulae  derived in Theorems \ref{sogradt1} and \ref{sogradta} below   involve the solution $\brho$ of the system \eqref{rho} below.   {In \eqref{rho} the system matrix is the  Hessian of $\phi$ and the right-hand side is  given by   the discrepancy between the training
 model $\bm'$ and its FWI reconstruction $\bm^{\rm FWI}(\cP, \alpha, \bm')$.} The existence of  $\brho $ is guaranteed by the following proposition.     
 \begin{proposition} \label{lem_sogradt}
   Provided 
   $\mu$ is sufficiently large, then,  for any   collection of sensors  $\cP$, regularisation parameter   $\alpha > 0 $, and   $ \bm, \bm' \in \mathbb{R}^M$,  the Hessian
   $H(\bm, \cP , \alpha) $
  is non-singular and 
  there is a unique  $\bm^{\rm FWI}(\cP,\alpha, \bm')\in \mathbb{R}^M$ satisfying
   \eqref{single}. {From now on,  we abbreviate this}   by writing  
   \begin{align}
   \bm^{\rm FWI}=\bm^{\rm FWI}(\cP,\alpha, \bm'). 
    \label{eq:abbrev} \end{align} 
\end{proposition}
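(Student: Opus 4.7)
The plan is to show that for $\mu$ sufficiently large the Hessian $H(\bm, \cP, \alpha)$ is uniformly positive definite over the admissible parameter range, whence its non-singularity follows immediately and, together with strict convexity and coercivity of $\phi(\cdot, \cP, \alpha, \bm')$, one obtains the unique solution of \eqref{single} by standard convex analysis.

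First I would differentiate the gradient formula \eqref{graddirect_here} once more with respect to $\bm$ to obtain the decomposition
\[
H(\bm, \cP, \alpha) \;=\; H_{\rm GN}(\bm, \cP) \;+\; H_{\rm res}(\bm, \cP, \bm') \;+\; \Gamma(\alpha, \mu),
\]
where the Gauss--Newton piece $H_{\rm GN}$ has entries $\sum_{s,\omega} \Re\langle \mathcal{R}(\cP)\,\partial u/\partial m_{k'},\, \mathcal{R}(\cP)\,\partial u/\partial m_{k}\rangle$ and is real symmetric positive semi-definite, while the residual piece $H_{\rm res}$ has entries $-\sum_{s,\omega}\Re\langle \beps,\, \mathcal{R}(\cP)\,\partial^2 u/\partial m_k\partial m_{k'}\rangle$ and carries no intrinsic sign control. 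Since $\mathsf{R}\succeq 0$, one has $\Gamma(\alpha,\mu) \succeq \mu I$. The next step would be to bound $\|H_{\rm res}\|$ uniformly: iterating \eqref{oneMgrad2a} gives a representation of $\partial^2 u/\partial m_k\partial m_{k'}$ as two nested applications of $\mathscr{S}_{\bm,\omega}\cG_{\bm,\omega}$ to products of the form $\beta_k\beta_{k'} u$, and the $L^2\to H^1$ mapping property of $\mathscr{S}_{\bm,\omega}$ noted in Remark \ref{rem:firstarg}(ii), together with interior regularity away from the sources (so that $\mathcal{R}(\cP)$ safely evaluates at sensor points, as is implicit in the continuity argument cited from \cite[Section 3.4.3]{thesis}), yield a constant $C$ depending only on the bounded ranges of $\bm, \bm', \cP, \alpha$ with $\|H_{\rm res}\|\le C$. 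Combining these facts gives, for every $\bsigma\in\mathbb{R}^M$,
\[
\bsigma^\top H \bsigma \;\ge\; \alpha\,\bsigma^\top \mathsf{R}\bsigma + (\mu - C)\,\|\bsigma\|_2^2,
\]
so choosing $\mu>C$ makes $H$ uniformly positive definite and in particular non-singular.

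Finally, positive definiteness of $H$ gives strict convexity of $\phi(\cdot,\cP,\alpha,\bm')$, while the $\tfrac12 \mu \|\bm\|_2^2$ contribution makes $\phi$ coercive in $\bm$; combined with the $C^2$ dependence of $\phi$ on $\bm$ (inherited from the smoothness of $u$ in $\bm$), these three properties force a unique global minimiser $\bm^{\rm FWI}\in\mathbb{R}^M$, which by strict convexity is also the unique critical point and hence the unique solution of \eqref{single}, validating the abbreviation \eqref{eq:abbrev}. The main obstacle in this plan is obtaining the uniform operator-norm bound on $H_{\rm res}$: quantifying the second Fr\'echet derivative of the Helmholtz solution with respect to its coefficient and then evaluating it safely at the sensor positions is the only non-routine ingredient, and determines quantitatively how large $\mu$ must be.
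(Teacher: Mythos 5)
Your proposal is correct and takes essentially the same route as the paper, whose entire proof is the single assertion that ``$H$ is symmetric and positive definite when $\mu$ is sufficiently large''; you have simply supplied the supporting details, and your decomposition into a Gauss--Newton part, a residual part and $\Gamma(\alpha,\mu)$ is exactly the splitting $H = H^{(1)} + H^{(2)} + \Gamma(\alpha,\mu)$ that the paper itself writes down later in the appendix (Section \ref{subsec:MVHess}). The only caveat is that your uniform bound $C$ on the residual block is obtained over a \emph{bounded} range of $\bm$, whereas the global strict convexity and unique-critical-point conclusion needs positive definiteness for all admissible $\bm$ --- but the paper glosses over precisely the same point, so this is a shared rather than a new gap.
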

\begin{proof}
  The result follows because  $H$  is symmetric and  positive definite
  when  $\mu$ sufficiently large.
\end{proof}

Under the conditions of Proposition \ref{lem_sogradt},   the linear   system
  \begin{align} \label{rho}
 H(\bm^{\rm FWI}, \cP, \alpha)\, \brho =  \bm'-\bm^{\rm FWI}  
\end{align}
has a unique solution  $\brho = \brho(\cP,\alpha, \bm')  \in \mathbb{R}^M$,
and we can  define the corresponding
function $\rho = \rho(\cP,\alpha, \bm')$ on $\Omega$ by
\begin{align} \label{rho_fn}
                                                \rho  = \sum_{k = 1}^{M} \rho_k \beta_k .
\end{align}

\begin{theorem}[Derivative of $\psi$ with respect to $p_{j,\ell}$]
    \label{sogradt1}
    If $\mu$ is sufficiently large,  then, for   $ j = 1, \ldots, N_r$,  and
    $\ell = 1, \ldots , d$,   $\partial \psi/\partial p_{j,\ell}$  exists and can be written
  \begin{align}  \frac{\partial \psi }{\partial p_{j,\ell}} (\cP, \alpha)\  =  
   \  & \frac{1}{N_{m'}}\sum_{\bm' \in \mathcal{M'}}\sum_{s \in \mathcal{S}}\sum_{\omega \in \mathcal{W}} \Re \,  a_\ell(\bm^{\rm FWI},\omega,s, \bm'; p_j) . \label{gradALT} 
  \end{align}
  Here, for each $\ell$, $a_\ell(\bm^{\rm FWI},\omega,s, \bm';p_j)$ denotes the  evaluation of the function
$a_\ell(\bm^{\rm FWI},\omega,s, \bm')$  at the point $p_j$, where   
  \begin{align} a_\ell(\bm^{\rm FWI},\omega,s, \bm') & = 
      \tau(\bm^{\rm FWI}, \omega,s, \bm') \left(\frac{\partial u}{\partial x_\ell} (\bm^{\rm FWI},\omega,s) - \frac{\partial u}{\partial x_\ell} (\bm',\omega,s)\right)\nonumber \\
    &  \qquad+ \frac{\partial \tau}{\partial x_\ell} (\bm^{\rm FWI}, \omega, s, \bm')
      \left( u (\bm^{\rm FWI},\omega,s) - u (\bm',\omega,s)\right), 
      \label{gradALT1}  \end{align}
 and     $\tau$ is given  by 
    \begin{align} \label{tau_def}
      \tau(\bm^{\rm FWI}, \omega,s, \bm') := (1,0)\,  \mathscr{S}_{\bm^{\rm FWI},\omega} \cG_{\bm^{\rm FWI}, \omega}(\rho u),      \end{align} where  $u = u(\bm^{\rm FWI}, \omega,s)$ {and the function $\rho = \rho(\cP,\alpha, \bm')$ is given by}    \eqref{rho_fn},  \eqref{rho}. {(For the meaning of the notation in \eqref{tau_def}, recall Remark  \ref{rem:firstarg} (i).) }
   \end{theorem}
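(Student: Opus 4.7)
The plan is to combine the chain rule with the Implicit Function Theorem and the classical adjoint-state trick. Applying the chain rule directly to the definition \eqref{SOobj} of $\psi$ gives
\begin{align*}
\frac{\partial \psi}{\partial p_{j,\ell}}(\cP,\alpha) = \frac{1}{N_{m'}}\sum_{\bm'\in\mathcal{M}'} \left\langle \bm^{\rm FWI}-\bm',\, \frac{\partial \bm^{\rm FWI}}{\partial p_{j,\ell}}\right\rangle,
\end{align*}
so the task reduces to obtaining a usable expression for $\partial \bm^{\rm FWI}/\partial p_{j,\ell}$. Under the hypothesis of Proposition \ref{lem_sogradt} (i.e., $\mu$ sufficiently large), $\bm^{\rm FWI}$ is defined implicitly by the first-order condition $\nabla\phi(\bm^{\rm FWI},\cP,\alpha,\bm')=\mathbf{0}$, and the positive definiteness of $H$ lets me apply the Implicit Function Theorem to obtain
\begin{align*}
H(\bm^{\rm FWI},\cP,\alpha)\,\frac{\partial \bm^{\rm FWI}}{\partial p_{j,\ell}} = -\frac{\partial (\nabla\phi)}{\partial p_{j,\ell}}(\bm^{\rm FWI},\cP,\alpha,\bm').
\end{align*}
Rather than solving this linear system directly for every coordinate $p_{j,\ell}$, I would invoke the adjoint-state trick: by the definition \eqref{rho} of $\brho$ and symmetry of $H$,
\begin{align*}
\frac{\partial \psi}{\partial p_{j,\ell}} = \frac{1}{N_{m'}}\sum_{\bm'}\left\langle \brho,\,\frac{\partial(\nabla\phi)}{\partial p_{j,\ell}}\right\rangle = \frac{1}{N_{m'}}\sum_{\bm'}\frac{\partial}{\partial p_{j,\ell}}\sum_{k=1}^M \rho_k\,\frac{\partial\phi}{\partial m_k},
\end{align*}
where the second equality uses that $\brho$ is held fixed during this differentiation.

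The second step is to simplify the $\brho$-weighted inner sum. By Proposition \ref{prop:elem}, the regularisation part of $\nabla\phi$ contributes only the $\cP$-independent term $\Gamma(\alpha,\mu)\bm$, which drops out upon differentiation in $p_{j,\ell}$. For the misfit term I would invoke the linearity identity in Remark \ref{rem:note} with $\bsigma=\brho$ to collapse the sum into a single auxiliary wavefield:
\begin{align*}
\sum_{k=1}^M \rho_k\,\frac{\partial u}{\partial m_k}(\bm^{\rm FWI},\omega,s) = (1,0)\,\mathscr{S}_{\bm^{\rm FWI},\omega}\,\cG_{\bm^{\rm FWI},\omega}(\rho u) = \tau(\bm^{\rm FWI},\omega,s,\bm').
\end{align*}
Substituting this into $\sum_k \rho_k\,\partial\phi/\partial m_k$ and using the definition of $\mathcal{R}(\cP)$, the inner sum reduces (after the standard juggling of complex conjugates inside $\Re$) to a sum over sources, frequencies, and sensors of a pointwise product of $\tau$ and the data misfit evaluated at each sensor position $p_i$.

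The main obstacle, and the reason the final answer has the two-term structure \eqref{gradALT1}, is that the coordinate $p_{j,\ell}$ appears in two distinct places in this sum: it sets the evaluation point of the auxiliary wavefield $\tau$ (through $\mathcal{R}(\cP)\tau$) and it also sets the sampling point inside the misfit $\beps_j$. Only the $i=j$ sensor term carries a $p_{j,\ell}$-dependence, and the product rule there yields exactly two contributions: (a) differentiation of the $\tau$-factor, producing $(\partial \tau/\partial x_\ell)(p_j)\cdot (u(\bm^{\rm FWI})-u(\bm'))(p_j)$; and (b) differentiation of the difference-of-wavefields factor, producing $\tau(p_j)\cdot\bigl(\partial u(\bm^{\rm FWI})/\partial x_\ell-\partial u(\bm')/\partial x_\ell\bigr)(p_j)$. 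Summing these, applying $\Re$, and averaging over $\bm'\in\mathcal{M}'$ produces precisely \eqref{gradALT}--\eqref{gradALT1}. The only step needing careful bookkeeping is tracking which variables are held fixed during the implicit differentiation and placing the complex conjugation correctly beneath the outer $\Re$.
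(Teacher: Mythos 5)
Your proposal is correct and follows essentially the same route as the paper's proof: chain rule on $\psi$, implicit differentiation of the first-order condition, the adjoint trick via $\brho$ and the symmetry of $H$, collapse of $\sum_k \rho_k\,\partial u/\partial m_k$ into $\tau$ via Remark \ref{rem:note}, and a product rule in $p_{j,\ell}$ yielding the two terms of \eqref{gradALT1}. The only (cosmetic) difference is that the paper carries out the final product rule on the distribution $\mathcal{R}(\cP)^*\beps$ using the derivative of the delta function (Definition \ref{def:part_delta}), whereas you differentiate the equivalent finite sum $\sum_i \tau(p_i)\overline{\beps_i}$ directly on the $\mathbb{C}^{N_r}$ side of the adjoint identity \eqref{I1}.
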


     \begin{notation}\label{not:conv}                                                            To simplify notation,             in several  proofs we assume only one training model $\bm'$, one source $s$ and one frequency $\omega$, in which case we drop the summations over these variables. In this case we also omit the appearance of $s, \omega$ in the lists of independent variables. \end{notation}

\begin{proof}
Adopting the convention in Notation \ref{not:conv},
  our  first step is to differentiate (\ref{SOobj}) with respect to each  $p_{j,\ell}$,  to obtain, recalling $\bm \in \mathbb{R}^M_+$ {and that $\langle \cdot, \cdot \rangle$ denotes the Euclidean inner product,}  
\begin{align}
\frac{\partial \psi}{\partial p_{j,\ell}}(\cP, \alpha) = -  \left \langle    \frac{\partial \bm^{\rm FWI}}{\partial p_{j,\ell}},  \bm'- \bm^{\rm FWI} \right \rangle, 
\label{sogradstep1}
\end{align}
where, {as in \eqref{eq:abbrev}}, $\bm^{\rm FWI}$ is an abbreviation for  $\bm^{\rm FWI}(\cP, \alpha, \bm')$.
 To find an expression for  the first argument in the inner product in \eqref{sogradstep1}, we differentiate  \eqref{single} with respect to
 $p_{j,\ell}$ and use the chain rule  to obtain
\begin{align}
  &H \frac{\partial \bm^{\rm FWI}}{\partial p_{j,\ell}} =- \frac{\partial \nabla \phi}{\partial p_{j,\ell}},
    \label{dmdpinter}
\end{align}
where, to improve readability {and, analogous to the shorthand notations adopted before,}  we have avoided explicitly writing  the dependent variables of
$\nabla \phi = \nabla \phi(\bm^{\rm FWI}, \cP, \alpha, \bm')$ and  $ H = H(\bm^{\rm FWI}, \cP, \alpha)$.   

  Then, combining \eqref{sogradstep1} and \eqref{dmdpinter} and using the symmetry of $H$ and the definition of $\brho$  in \eqref{rho}, we obtain
\begin{align}
  \frac{\partial \psi}{\partial p_{j,\ell}}(\cP, \alpha)  \ & = \   \left \langle
                                                        \frac{\partial \nabla \phi}{\partial p_{j,l}},  \brho
                                                        \right \rangle
  \ =\  \sum_{k=1}^M \rho_k    \left(\frac{\partial^2 \phi}{\partial p_{j,\ell} \, \partial m_k}\right), 
\label{sogradstep2}
\end{align}
where we used the fact that $\brho = \brho(\cP, \alpha, \bm')\in \mathbb{R}^M$. Recall also that $\partial^2 \phi / \partial p_{j,\ell} \partial m_k$ is evaluated at $(\bm^{\rm FWI}, \cP,\alpha,\bm')$.   

Then, to simplify \eqref{sogradstep2}, we differentiate    \eqref{graddirect_here} with respect to $p_{j,\ell}$ and then  use    \eqref{I1} to obtain, for any $\bm, \cP, \alpha$ (and recalling Notation \eqref{not:conv}), 
 \begin{align}
\hspace{-2cm}\left(\frac{\partial^2  \phi}{ \partial p_{j,\ell} \, \partial m_k}\right)(\bm, \cP, \alpha, \bm')&=- \Re\,  \frac{d  }{d p_{j,\ell}}
\left \langle \mathcal{R}(\cP)\frac{\partial u}{\partial m_k}(\bm)   , \boldsymbol{\varepsilon}(\bm,\cP,\bm')\right \rangle  
                                                                                                                 \nonumber \\
&=- \Re \, \frac{d }{d p_{j,\ell}}
\left(  \frac{\partial u}{\partial m_k}(\bm), \mathcal{R}(\cP)^*\boldsymbol{\varepsilon}(\bm,\cP, \bm')\right)_\Omega  \nonumber\\
&=- \Re\, 
                                                                                                                                              \left(  \frac{\partial u}{\partial m_k}(\bm), \frac{d}{d p_{j,\ell}}\bigg(\mathcal{R}(\cP)^*\boldsymbol{\varepsilon}(\bm,\cP,\bm')\bigg)   \right)_\Omega   . \label{omeMgrad2}
 \end{align}
  Hence, evaluating \eqref{omeMgrad2} at $\bm = \bm^{\rm FWI}$,   combining this  with  \eqref{sogradstep2} and then using  \eqref{oneMgrad2b}, we have   
 \begin{align} \frac{\partial \psi}{\partial p_{j,\ell}} (\cP, \alpha)
   &= -\Re \left(\sum_k \rho_k(\cP,\alpha,\bm') \frac{\partial u}{\partial m_k}(\bm^{\rm FWI}), \frac{d}{d p_{j,\ell}}\bigg(\mathcal{R}(\cP)^*\boldsymbol{\varepsilon}(\bm^{\rm FWI},\cP, \bm') \bigg)   \right)_\Omega   \nonumber  \\
   &=- \Re   \left( (1,0) \, \mathscr{S}_{\bm^{\rm FWI},\omega} \cG_{\bm^{\rm FWI}, \omega} \bigg(\rho(\cP,\alpha,\bm') u (\bm^{\rm FWI})\bigg)    , \frac{d}{d p_{j,\ell}}\bigg(\mathcal{R}(\cP)^*\boldsymbol{\varepsilon}(\bm^{\rm FWI},\cP, \bm')\bigg)   \right)_\Omega.    \nonumber
 \end{align}
Now, using the definition of $\tau = \tau(\bm^{\rm FWI}, \bm')$ in \eqref{tau_def} we obtain   
 \begin{align}
\frac{\partial \psi}{\partial p_{j,\ell}} (\cP, \alpha)  &=- \Re  \left( \tau, \frac{d}{d p_{j,\ell}}
   \bigg(\mathcal{R}(\cP)^* \boldsymbol{\varepsilon}(\bm^{\rm FWI},\cP, \bm') \bigg)     \right)_\Omega.
                                                                           \label{oneMgrad3} 
  \end{align}

 To finish the proof, we 
 note that the operator $d/dp_{j,\ell}$,  appearing in \eqref{oneMgrad3},  denotes the {\em total}  derivative with respect to $p_{j,\ell}$.
Recalling   \eqref{resdbi} and \eqref{Rv}, we have   
\begin{align*}
\beps_{j'}(\bm^{\rm FWI},  \cP, \bm')&= u(\bm';p_{j'})-u(\bm^{\rm FWI};p_{j'}), \quad \text{for} \quad  j' = 1, \ldots, N_r. 
\end{align*}
Therefore, by \eqref{Rstar},
 \begin{align*}
 \cR(\cP)^*\beps(\bm^{\rm FWI}, \cP, \bm')= \sum_{j'=1}^{N_r} \left(u(\bm';p_{j'}) - u(\bm^{\rm FWI};p_{j'})  \right)\delta_{p_{j'}}.
 \end{align*}
 and thus 
 \begin{align*}  \frac{d}{d p_{j,l}}\left(\mathcal{R}(\cP)^*\beps(\bm^{\rm FWI},\cP, \bm')\right)
 & =                 
        \left(\frac{\partial u}{\partial x_l} (\bm';p_j) - \frac{\partial u}{\partial x_l} (\bm^{\rm FWI};p_j)\right)  \delta_{p_j}    \nonumber \\
  & \mbox{\hspace{1cm}} +  (u (\bm';p_j) -  u(\bm;p_j)) \frac{\partial}{\partial x_l} (\delta_{p_j}).
 \end{align*}
 Recalling Definition \ref{def:part_delta} and substituting this into \eqref{oneMgrad3} yields the result \eqref{gradALT}, \eqref{gradALT1} {(with $N_{\bm'} = 1$)}. 
\end{proof}

  \subsection{Derivative  of $\psi$ with respect to regularisation parameter  $\alpha$} 

 \begin{theorem}
   \label{sogradta}
   Provided $\mu$ is sufficiently large, $\partial \psi/\partial \alpha$ exists and is given by the formula 
\begin{align}
  \frac{\partial \psi}{\partial \alpha} (\cP, \alpha) = \frac{1}{N_{m'}}\sum_{\bm' \in \mathcal{M'}} (\bm^{\rm FWI})^\top
  \,
 {\mathsf{R}} \,    \bdelta 
 \label{sograda} \end{align}
where $\bm^{\rm FWI} = \bm^{\rm FWI}(\cP, \alpha, \bm')$ and $\bdelta = \bdelta(\bm^{\rm FWI},  \cP, \alpha, \bm')$ are as given  in Proposition  \ref{lem_sogradt} and \eqref{rho},
and {$\mathsf{R}$} is as in \eqref{ThisGamma}.
\end{theorem}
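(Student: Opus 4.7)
The proof will follow the same overall template as Theorem \ref{sogradt1}, but will be considerably simpler because $\phi$ depends on $\alpha$ only through the regularisation term $\frac{1}{2}\bm^\top \Gamma(\alpha,\mu)\bm = \frac{1}{2}\bm^\top(\alpha \mathsf{R} + \mu I)\bm$; no derivatives of wavefields, restriction operators, or delta distributions will appear.

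The plan is as follows. First, differentiating \eqref{SOobj} with respect to $\alpha$ and using the chain rule gives
\begin{align*}
\frac{\partial \psi}{\partial \alpha}(\cP,\alpha) \ = \ -\frac{1}{N_{\bm'}}\sum_{\bm'\in\mathcal{M}'}\left\langle \frac{\partial \bm^{\rm FWI}}{\partial \alpha},\ \bm'-\bm^{\rm FWI}\right\rangle,
\end{align*}
exactly analogous to \eqref{sogradstep1}. Second, to find $\partial \bm^{\rm FWI}/\partial \alpha$, I differentiate the first-order optimality condition \eqref{single} with respect to $\alpha$; as in \eqref{dmdpinter} this yields
\begin{align*}
H(\bm^{\rm FWI},\cP,\alpha)\,\frac{\partial \bm^{\rm FWI}}{\partial \alpha} \ = \ -\frac{\partial \nabla \phi}{\partial \alpha}(\bm^{\rm FWI},\cP,\alpha,\bm').
\end{align*}

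The key observation is that $\alpha$ enters $\nabla \phi$ only through the regularisation term in \eqref{graddirect_here}, specifically through $\Gamma(\alpha,\mu)\bm = (\alpha \mathsf{R} + \mu I)\bm$, so that $\partial_\alpha \nabla \phi = \mathsf{R}\bm^{\rm FWI}$. Hence $H\,(\partial \bm^{\rm FWI}/\partial \alpha) = -\mathsf{R}\bm^{\rm FWI}$, so by Proposition \ref{lem_sogradt}, $\partial \bm^{\rm FWI}/\partial \alpha = -H^{-1}\mathsf{R}\bm^{\rm FWI}$.

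Substituting back and using symmetry of $H$ together with the defining equation \eqref{rho} for $\brho$ (which gives $H^{-1}(\bm'-\bm^{\rm FWI}) = \brho$), I obtain
\begin{align*}
\frac{\partial \psi}{\partial \alpha}(\cP,\alpha) \ =\  \frac{1}{N_{\bm'}}\sum_{\bm'\in\mathcal{M}'}\left\langle \mathsf{R}\bm^{\rm FWI},\ H^{-1}(\bm'-\bm^{\rm FWI})\right\rangle \ =\ \frac{1}{N_{\bm'}}\sum_{\bm'\in\mathcal{M}'}(\bm^{\rm FWI})^\top \mathsf{R}\,\brho,
\end{align*}
which is \eqref{sograda}. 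There is no real obstacle here: the result rests only on the fact that $\Gamma(\alpha,\mu)$ is linear in $\alpha$, on the symmetry and (for sufficiently large $\mu$) invertibility of $H$ provided by Proposition \ref{lem_sogradt}, and on the applicability of the Implicit Function Theorem to the optimality system (already invoked at the end of Section \ref{sec:bilevel}'s preamble to justify that $\bm^{\rm FWI}$ is a $C^1$ function of $\alpha$).
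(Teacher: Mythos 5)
Your proposal is correct and follows essentially the same route as the paper's own proof: differentiate \eqref{SOobj} in $\alpha$, differentiate the optimality condition \eqref{single} to get $H\,\partial_\alpha\bm^{\rm FWI}=-\mathsf{R}\,\bm^{\rm FWI}$, and then use the symmetry of $H$ together with the definition \eqref{rho} of $\brho$. The only cosmetic difference is that the paper invokes Notation \ref{not:conv} to work with a single training model and reinstates the sum at the end.
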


\noindent
 \begin{proof}
  The steps follow the proof of Theorem \ref{sogradt1}, but are simpler,  and again we assume only one 
    training model $\bm'$. 
First we  differentiate (\ref{SOobj}) with respect to $\alpha$ to obtain 
\begin{align}
\frac{\partial \psi(\cP, \alpha)}{\partial \alpha}= -  \left \langle    \frac{\partial  \bm^{\rm FWI}}{\partial \alpha},  \bm'- \bm^{\rm FWI} \right \rangle.
\label{sogradstep1a}
\end{align} 
Then we  differentiate \eqref{single} with respect to $\alpha$ to obtain, analogous to \eqref{dmdpinter}, 
\begin{align}
H \frac{\partial \bm^{\rm FWI}}{\partial \alpha} \ =\ -   \frac{\partial \nabla \phi}{\partial \alpha}. \label{sysa}
\end{align}
Differentiating  \eqref{graddirect_here} with respect to $\alpha$,    
{using \eqref{Gamma},}  and then substituting the {result} into   the right-hand side of \eqref{sysa}, we obtain    
 \begin{align}
   H \,
   \frac{\partial \bm^{\rm FWI}}{\partial  \alpha}  &\ =\ - {\mathsf{R}} \, \bm^{\rm FWI}. \label{dmda}
 \end{align}  
 Substituting (\ref{dmda}) into (\ref{sogradstep1a}) and recalling the definition of $\bdelta$ in \eqref{rho}   gives \eqref{sograda} {(with $N_{\bm'} = 1$)}.  \end{proof}

\medskip 
\noindent Algorithm \ref{alg:box} summarises the steps involved in computing the derivatives of the upper level objective function. {Here $\mathcal{M}'$ denotes the set of training models and $\mathcal{M}^{\rm FWI}$ denotes the set of their FWI reconstructions. }

                                                                                                      \begin{algorithm}[h!]
      \setstretch{1.15}                                                                             \caption{Derivative of $\psi$ with respect to $\alpha$ and  $p_{j,\ell}$  }
 \medskip
 
 \begin{algorithmic}[1]
   \State \textit{Inputs:} $\cP$, \ $\alpha$, \ $\mathcal{M'}$, \
  $\mathcal{M}^{\rm FWI}:= \{\bm^{\rm FWI}(\cP, \alpha, \bm'): \bm' \in \mathcal{M}'\}$ (lower level solutions), \ $j,\ell$

\State {\bf For each} $\bm' \in \mathcal{M'}$ (letting $\bm^{\rm FWI}$ denote $\bm^{\rm FWI}(\cP
  , \alpha, \bm')$):
\State \indent {\bf Solve} (\ref{rho}) for $\bdelta = \bdelta( \cP, \alpha, \bm')$

\State \indent  {\bf For each} $\omega \in \mathcal{W}$, $s \in \mathcal{S}$:  \hfill ($\star$) 

\State \indent \quad  {\bf Compute} $u(\bm^{\rm FWI}, \omega, s) = (1,0) \mathscr{S}_{\bm^{\rm FWI}, \omega} (\delta_s)   $; 
\State \indent \quad {\bf Compute} $\tau(\bm^{\rm FWI}, \omega, s,  \bm')$ by \eqref{tau_def}; 
\State \indent \quad {\bf Compute} $a_{\ell}(\bm^{\rm FWI}, \omega,s,\bm')$  by \eqref{gradALT1} and evaluate at $p_j$.
\State \indent {\bf End}
\State {\bf End}
\State  \textit{Output 1:} Compute $\partial \psi / \partial \alpha $ by \eqref{sograda}. 
\State \textit{Output 2: } Compute $\partial \psi / \partial p_{j,\ell}$  by (\ref{gradALT})
\end{algorithmic}
 \label{alg:box}
\end{algorithm}

\begin{remark}[Remarks on Algorithm \ref{alg:box}]\label{rem:box}
The computation of \textit{Output 1} does not require the inner loop over $\omega$ and $s$ marked ($\star$) {in Algorithm \ref{alg:box}}.

For each $s, \omega, \bm'$, Algorithm \ref{alg:box}  requires two 
Helmholtz  solves, one for $u$ and one for $\tau$. While $u$ would already be available as the wavefield arising in  the lower level problem, the computation of $\tau$  involves data determined by  $\rho u$, where $\rho = \rho(\cP, \alpha, \bm')$
     is given by \eqref{rho} and \eqref{rho_fn}.   
                                                                                                       
 The system  \eqref{rho}, which has to be solved for $\bdelta$, has system matrix $H(\bm^{\rm FWI}(\cP, \alpha, \bm'), \cP, \alpha)$, which is real symmetric. As is shown in Discussion \ref{disc:Hess},  
  matrix-vector multiplications with $H$ can be done very efficiently  and  so we solve   \eqref{rho}
  using an iterative method.
  Although positive definiteness of  $H$ is only guaranteed for $\mu$ sufficiently large
 (and such $\mu$ is not in general known), here we used  the preconditioned conjugate gradient method and found it to be very effective in all cases. Details are given in Section \ref{sect:PCG}.  

  Analogous Hessian systems arise in the application of the  truncated Newton method
  for the lower level problem (i.e., FWI).   In \cite[Section 4.4]{metivier2017full} the conjugate gradient method was also applied to solve these, although this was replaced by  Newton or steepest descent directions if
  the Hessian became indefinite. 
\end{remark}

\subsection{Complexity Analysis in Terms of the Number of PDE Solves}
\label{sec:complexity}

  To  assess the complexity of the {proposed} bilevel algorithm  in terms of the number of Helmholz solves needed (arguably the most computationally intensive part of the algorithm), we  introduce the  notation:
\begin{itemize}
\item $N_{\rm upper}$ = number of  iterations needed to solve the upper level optimisation problem. 
\item $N_{\rm lower}$ = average number of  iterations needed to solve the lower level (FWI) problem.  (Since the number needed will vary as the upper level iteration progresses,  we work with the average here.) 
\item $N_{\rm CG}$ = average number of conjugate gradient iterations used to solve (\ref{rho})
\item $N_{\rm data} := N_s * N_{\omega} * N_{m'}$ = the product of the number of sources, the number of frequencies and the number of training models =  the total amount of data used in the
  algorithm.  
\end{itemize}

\par The total cost of solving the bilevel problem may then be broken down as follows.  \begin{itemize}
\item[A] {\bf Cost of computing $\{\bm^{\rm FWI}(\cP,\alpha,\bm'): \bm' \in \mathcal{M}'\}$.} \ Each iteration of FWI requires two Helmholtz solves for each $s$ and $\omega$ (see Section  \ref{subsec:gradphi})
 and this is repeated for each $\bm'$, so the total cost is     $2N_{\rm lower}N_{\rm data} $ Helmholtz solves. 
\item[B]  {\bf Cost of updating $\cP, \alpha$}.  \ To solve the systems \eqref{rho} for each $s, \omega$ and $\bm'$ via the  conjugate gradient method we need, in principle, to do four Helmholtz solves for each matrix-vector product with $H$ (see Section \ref{subsec:MVHess}).
  However two of these ($u$ and the adjoint solution $\lambda$ defined by \eqref{lambda}) have already been counted in Point A.
  So the total number of solves needed by the CG method is  $2N_{\rm CG}N_sN_{\omega}N_{\bm'} = 2N_{\rm CG}N_{\rm data}$. 
  After this has been done, for  each  $s, \omega, \bm'$  one more Helmholtz solve is needed to compute \eqref{tau_def}. So the total cost of one update to $\cP, \alpha$ is $(2 N_{\rm CG} + 1)N_{\rm data}$ 
\end{itemize}
Summarising, we obtain the following result. 

\begin{theorem} \label{thm:cost}  The total cost of solving the bilevel sensor optimisation problem with a gradient-based optimisation method in terms of the number of PDE solves is 
\begin{align*}
\mathrm{Number \ of \ Helmholtz  \ Solves}= N_{\rm upper}\left( 2N_{\rm lower} + 2N_{\rm CG} + 1   \right)N_{\rm data} .
\end{align*}
\end{theorem}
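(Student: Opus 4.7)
The statement is essentially a bookkeeping identity, and the two preceding bullet points A and B already contain the main accounting; the plan is therefore to formalise the counting and explain why there is no double-counting between the two contributions.

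First, I would observe that a gradient-based algorithm for the reduced single-level problem (Definition \ref{def:reduced}) consists of exactly $N_{\rm upper}$ outer iterations, and at each such iteration two tasks must be performed: (i) evaluate the lower-level minimiser $\bm^{\rm FWI}(\cP,\alpha,\bm')$ for every training model $\bm'\in\mathcal{M}'$ (needed both to evaluate $\psi$ and to set up the right-hand side of \eqref{rho}); (ii) compute the upper-level gradient components $\partial\psi/\partial p_{j,\ell}$ and $\partial\psi/\partial\alpha$ via Algorithm \ref{alg:box}. The quoted formula then arises simply by multiplying the per-iteration cost by $N_{\rm upper}$, so it suffices to justify that the per-iteration cost is $(2N_{\rm lower} + 2N_{\rm CG} + 1)N_{\rm data}$ Helmholtz solves.

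For task (i), each iteration of the inner quasi-Newton scheme for FWI requires evaluation of $\phi$ and $\nabla\phi$, which by Proposition \ref{prop:elem} and its adjoint-state implementation (Section \ref{subsec:gradphi}) needs one forward and one adjoint solve per source-frequency pair; summing over $s\in\mathcal{S}$, $\omega\in\mathcal{W}$, and $\bm'\in\mathcal{M}'$ yields $2N_{\rm data}$ solves per FWI iteration, and averaging over the $N_{\rm lower}$ iterations gives the contribution $2N_{\rm lower}N_{\rm data}$. For task (ii), solving the Hessian system \eqref{rho} by conjugate gradients costs, per matrix-vector product with $H$, four Helmholtz solves in principle (Section \ref{subsec:MVHess}); however two of these are precisely the forward wavefield $u(\bm^{\rm FWI},\omega,s)$ and its adjoint $\lambda$ from \eqref{lambda}, both of which are already computed and cached during task (i). Thus each CG step costs only two \emph{new} Helmholtz solves, giving $2N_{\rm CG}N_{\rm data}$. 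Finally, once $\brho$ is known, evaluating $\tau$ via \eqref{tau_def} requires exactly one additional Helmholtz solve per $(s,\omega,\bm')$ triple, contributing $N_{\rm data}$.

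Adding the contributions from tasks (i) and (ii) yields the per-iteration cost $(2N_{\rm lower} + 2N_{\rm CG} + 1)N_{\rm data}$, and multiplying by $N_{\rm upper}$ gives the claimed total. The main subtlety I would emphasise in writing this up is the caching argument that avoids double-counting the forward and adjoint solves shared between FWI gradient evaluation and Hessian-vector multiplication; without this observation one would naively obtain $4N_{\rm CG}N_{\rm data}$ rather than $2N_{\rm CG}N_{\rm data}$ in the second term. Everything else is direct arithmetic using the definition of $N_{\rm data} = N_sN_\omega N_{\bm'}$.
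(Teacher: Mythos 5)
Your proposal is correct and follows essentially the same accounting as the paper: the paper's proof consists precisely of the two itemised points A and B preceding the theorem, with point A giving the $2N_{\rm lower}N_{\rm data}$ term and point B giving the $(2N_{\rm CG}+1)N_{\rm data}$ term, including the same observation that the solves for $u$ and the adjoint $\lambda$ in the Hessian-vector products are already available from the lower-level computation. Your explicit emphasis on the caching argument (that $u$ and $\lambda$ are independent of the CG iterate and hence reused, leaving only $v$ and the final adjoint solve as new work per CG step) matches Discussion~\ref{disc:Hess} and Remark~\ref{rem:box}.
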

  When using line search with the gradient-based optimisation method (as we do in practice),
  there is an additional cost factor of the number of line search iterations, but we do not include that here.
  
  While the cost reported in Theorem \ref{thm:cost} could be quite substantial, we note that it is completely independent of the number of parameters (in this case sensor coordinates and  regularisation parameters) that we choose to optimise. Also, as we see in Section \ref{sect:par}, the algorithm is highly parallelisable over training models, and experiments suggest that in a suitable parallel environment the factor  $N_{m'}$ will not appear in $N_{\rm data}$.

  \section{Application to a Marmousi problem}
\label{sec:Numerical}


{In this section we evaluate the performance of our bilevel  algorithm by applying it  to a variant
  of the Marmousi model on a rectangle in $\mathbb{R}^2$.
  Figure \ref{fig:bilevel} summarises the structure of our algorithm.
To give more detail, for each model $\bm'$ in  a pre-chosen set of training models $\mathcal{M}'$, initial guesses $\cP_0$ and $\alpha_0$ for the design parameters 
$\cP$ and $\alpha$, respectively,  are
  fed into   the lower-level optimisation  problem. This lower-level problem  is solved by a quasi-Newton method (see Section \ref{subsec:Quasi} for more detail), stopping when 
  the norm of the gradient of the lower level objective function
  is sufficiently small. This yields  {a new set of} models
  $\mathcal{M}^{\rm FWI} := \{\bm^{\rm FWI}(\cP_0, \alpha_0, \bm'): \bm'\in\mathcal{M}' \}$ .
The  set  $\mathcal{M}^{\rm FWI}$ is   then an input for the upper level optimsation problem, which in turn yields a new $\cP$ and $\alpha$ to be inputted as design parameters in the lower level problem. This
  process is iterated to eventually  obtain  $\alpha_{\rm opt}, \cP_{\rm opt}$ and corresponding reconstructions of the training models.}   
{More details about the numerical implementation   
are given in the Appendix (Section \ref{app:Details}).}

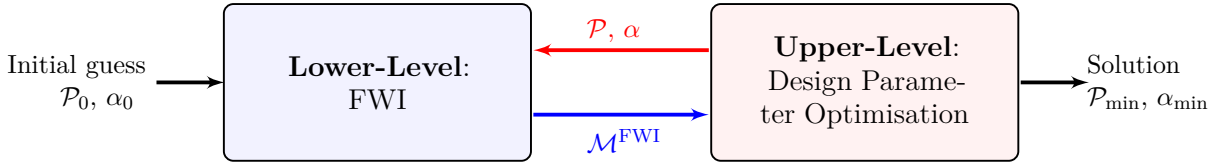
\begin{figure}[htp]
\scalebox{0.9}{
\begin{tikzpicture}[scale=0.95]
\node[text width=3.5cm] (ic) at (-4.6,4) { \normalsize Initial guess  \\ \normalsize $\qquad\cP_0$, $\alpha_0$}; 
\path [line, ultra thick] (-4.15,4) --  (-3.1,4);
\node [blocklargeh] (lower) at (-0.75,4) {\large \textbf{Lower-Level}:\\
FWI\\
};
\node [blocklargeredh] (upper) at (6.75,4) {\large \textbf{Upper-Level}:\\
Design Parameter Optimisation\\ 
};
\path [line, blue, ultra thick] (1.65,3.5) --  (4.35,3.5);
\node [blue,text width=1cm] at (3,3.1) {$\mathcal{M}^{\rm FWI}$}; 
\path [line, red, ultra thick] (4.35,4.5) -- (1.65,4.5);
\node [red,text width=1cm] at (3,4.8) {$\cP,\, \alpha$}; 
\path [line, ultra thick] (9.15,4) -- (10.2,4);
\node[text width=1cm] at (10.7,4) {\normalsize Solution\\ \normalsize $   \cP_{\min}, \, \alpha_{\min}$}; 
\end{tikzpicture}
}
\caption{Overall Schematic of the Bilevel Problem. \label{fig:bilevel}} 
\end{figure}
\noindent



{Since we want to assess the performance of our sensor positions and regularisation weight optimisation algorithm,  rather than the choice of regularisation, in the lower level problem we only consider FWI equipped with Tikhonov regularisation, with regularisation term as in \eqref{Gamma}. Since it is well-known that such kind of regularisation tends to oversmooth model discontinuities, the Marmousi model adopted here has been slightly smoothed by applying a Gaussian filter  horizontally and vertically (see Figure \ref{fig:Exp2GT}). We stress, however, that the algorithm we propose could also be combined with other regularisation techniques more suitable for non-smooth problems, such as total variation and total generalised variation, as used, for example, in \cite{aghamiry2021full, esser2018total, tgv}.}
\begin{figure}[h!]
\centering
\includegraphics[scale=1]{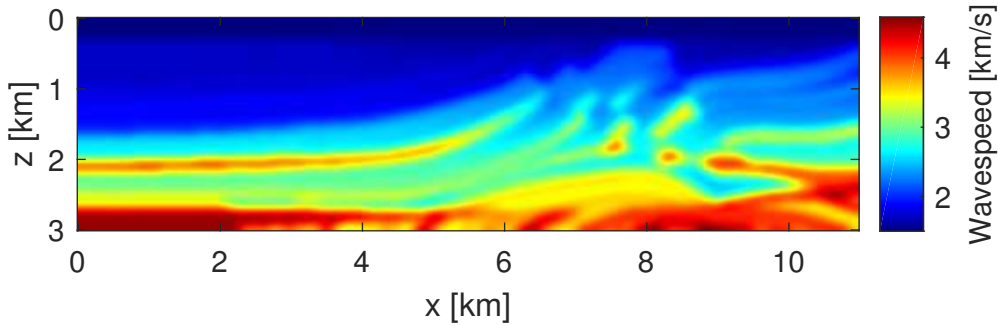}
\caption{Smooth Marmousi model. \label{fig:Exp2GT}}
\end{figure} 

{We discretised the model in Figure \ref{fig:Exp2GT} (which is 11km wide and 3km deep)   using  a $440 \times 121$ rectangular grid, yielding a
grid spacing of 25m in both the $x$ (horizontal) and $z$ (vertical) directions. Analogous to the experimental setup adopted in \cite[Section 4.3]{haber2003learning} and \cite[Section 5.1]{haber2008numerical}, we split this  horizontally into five slices of equal size (each with 10648 model parameters); see the first row of Figure \ref{fig:reconstr}. In our  experiments  we shall choose four of these slices as training models
  for the bilevel optimisation and reserve the fifth slice for testing the optimised parameters. Thus the bilevel algorithm will be implemented on a rectangular domain of width 2.2km and depth 3km, so that  $M= 10648$ is the dimension of the model space, as in \eqref{disc_to_cont}.} 

{On this domain  we   consider a cross-well  transmission set-up, where  sources are positioned uniformly along a vertical line (borehole) near its  left-hand edge and sensors are placed (iniially randomly) along a vertical bore-hole near the right-hand edge.  
  The positions of the sensors  are  optimised, with the constraint that they should always
  lie within
    the  borehole.  Here we give results only for the case of 5 sources and sensors.
    In the case of 5 fixed sources,  their positions and those of the sensors (before and after optimisation)   are illustrated   in
  Figure \ref{fig:all5}.  Here their positions are imposed on  Slice 2 of the Marmousi model.} 



{For each training model $\bm'$, the initial guess  for
  computing $\bm^{\rm FWI} (\cP_0,\alpha_0,\bm')$   in the lower-level problem  was taken to be $m = 1/c^2$, where $c$ is a smoothly vertically varying wavespeed, increasing with depth, horizontally constant,  and containing no information on any of the structures in the training models.  

In practice the algorithm in Figure \ref{fig:bilevel} was implemented within the  
  framework of bilevel frequency continuation (as described in Algorithm \ref{allg:fcb1} of Section \ref{app:Details}).
   Here the frequencies used were 0.5Hz, 1.5Hz, 3Hz, 6Hz, split into groups [(0.5), (0.5, 1.5), (1.5, 3), (3,6)]. The choice of increments and overlap of groups   was  motivated by results reported  in\cite{sirgue2004efficient}, \cite{BrOpVi:09}.}
  
  {In the cases where we have
    optimised  the regularisation weight $\alpha$, its initial value ($\alpha=10$) was kept fixed in the first three frequency groups of the frequency continuation scheme, and its value was optimised only in the final frequency group.}
  
  {At each iteration of the bilevel algorithm, the lower-level problem was  solved to a tolerance of $\|\nabla \phi\|_2 \leq 10^{-10}$. Preconditioned CG,  with preconditioner given by \eqref{P2} was used to solve the Hessian system \eqref{rho} (required for  the upper-level gradient computation), with initial guess
  chosen as the vector,  each entry of which is $1$; the CG iterations were terminated when a Euclidean norm relative residual reduction of  $10^{-15}$ was achieved. In the optimisation at the upper level, for each frequency group,  the iterations were terminated when  any one of the following conditions was satisfied:  (i) the infinity norm of the 
  gradient (projected onto the feasible design space) 
  was smaller than $ 10^{-10}$,   (ii) the updates to $\psi$ or the optimisation parameters stalled or (iii) 50 iterations  were reached.
  Note that condition (i) is similar to the condition proposed in  \cite[Equation (6.1)]{byrd1995limited}.}

{    To compare the    reconstructions of a given model numerically,   we computed several statistics as follows:
    For the $j$th parameter of a given model, its  relative percentage error (RE(j)) and the  mean 
      mean of these values  (MRE)  are  defined by: 
  \begin{align}
\text{MRE} := M^{-1} \sum_{j=1}^M \text{RE}(j), \quad \text{where} \quad \text{RE}(j) := \left| \frac{\text{Reconstruction}(j) - {\text{Ground Truth}(j)} }{{\text{Ground Truth}(j)}}  \right| \times 100.   
\label{relerr}
  \end{align} }

  {  The  {\em Structural Similarity Index (SSIM)} between the ground truths and the optimised reconstructions. This is a quality metric commonly used in imaging \cite[Section III B]{wang2004image}. A good similarity between images is indicated by values of SSIM that are close to 1.
    The SSIM values were computed using the \texttt{ssim} 
function in Matlab's Image Processing Toolbox.} 

{  Finally the Improvement Factor is defined to be the improvement in the value of the objective
  function $\psi$ of the upper level problem after optimisation, i.e.,
  $$ \text{IF}  = \frac{\psi(\alpha_0,\cP_0)}{\psi(\alpha_{\rm opt},\cP_{\rm opt})}, $$
    where $\alpha_{\rm opt}$ and $\cP_{\rm opt}$ are the optimised design parameters. (The scenarios below include the cases where either $\alpha$ or $\cP$ is  optimised, or both.) }

\subsection{Benefit of jointly optimising over sources' locations and weighting parameter.}

{We start by showing the benefit of optimising over both source locations and regularisation weight.
  To do this, we  compare the results so obtained with those  obtained by optimising solely over the source locations and solely over the regularisation parameter.

  We  ran our bilevel learning algorithm using slices 1, 2, 3, and 5 as training models, and 
  slice 4 for testing.
  When testing, we added   $1\%$ Gaussian white noise {(corresponding to 40dB SNR)} to the synthetic data.
}

{  In Table \ref{tab:jointOpt},  we report the values of  MRE, SSIM 
and IF   for the
reconstructions using  (i) the  unoptimised design  parameters,  together with those obtained by (ii) optimising with respect to $\alpha$ with  $\cP$  fixed and randomly placed in the well,  
(iii) optimising with respect to  $\cP$ keeping  $\alpha = 10$ fixed;  and finally (iv) optimising  $\cP$ and  $\alpha$  simultaneously. The random choice of sensors in  (ii) is motivated by the evidence {\em a priori}  that sensor placement benefits from random positioning, see \cite{hennenfent2008simply}. } 

 \begin{table}[h!]
 \footnotesize
 \begin{center}
\begin{tabular}{| c | c | c |c|c | c | c | c | c| c | c | c|} 
 \hline
slice & \multicolumn{2}{c|}{(i) unoptimised} & \multicolumn{3}{c|}{(ii) $\alpha$-optimised} & \multicolumn{3}{c|}{(iii) $\mathcal{P}$-optimised} &  \multicolumn{3}{c|}{(iv) $\mathcal{P},\alpha$-optimised}\\\hline
 & MRE & SSIM & MRE & SSIM & IF & MRE & SSIM &IF& MRE & SSIM & IF \\\hline
  1 & 4.06 & 0.82 & 3.86 & 0.82 &1.06 & 3.02 & 0.87 & 4.03 &   2.45  & 0.89 & 4.92 \\\hline
 2 &  4.45 & 0.76 & 4.28  &0.77 & 1.03  & 3.30 & 0.81 &3.40    &  3.03 & 0.83 &3.87 \\\hline
 3 & 4.73  & 0.78  & 4.58 & 0.79 & 1.03 &  2.77  &0.85  & 5.42 &  2.45 &0.86 & 6.55 \\\hline
 \textbf{4} & \textbf{7.37} & \textbf{0.67}  & \textbf{7.01}  & \textbf{0.68}  & \textbf{1.22} &  \textbf{5.60}  & \textbf{0.72}  & \textbf{4.77}  & \textbf{4.92} & \textbf{0.76} & \textbf{7.56} \\\hline
 5 & 7.31& 0.68 &5.75  &0.72  & 2.55  &  3.78 &  0.84 & 13.80 &  3.51 &0.86 &18.33\\\hline
\end{tabular}
\end{center}
\caption{{Values of Mean Relative percentage error (MRE), SSIM and IF  for  different choices of the FWI design parameters. Slices 1, 2, 3 and 5 were used for training, while slice 4 was used for testing and is highlighted in bold face}.}\label{tab:jointOpt}
\end{table}
{
The results show steady  improvement of all accuracy indicators  as we proceed through the options (i)-(iv). 
In particular we note the strong superiority of strategy (iv). Here the optimised design parameters obtained by learning on slices 1,2,3 and 5 yield an imrovement factor of more than 7 when tested on  slice 4 (even though in this case 1\% white noise was added).    

{
We highlight two points regarding Strategies (iii) and (iv). First, given $\mathcal{P}$, it is relatively cheap to optimise with respect to $\alpha$ -- see Remark \ref{rem:box} (i.e., Strategy (iv) has essentially the same computational cost as Strategy (iii)). Second, since $\alpha$ is tailored to the fit-to-data, which in turn depends on $\mathcal{P}$, it makes sense to change $\alpha$ when $\mathcal{P}$ is changed.
}


In Figure \ref{fig:reconstr}, we display, for each slice,  in row 1: the ground truth, in row 2:  the reconstructions of the training
and testing models, using  the  unoptimised parameters  (random sensor positions and $\alpha = 10$) and finally in row 3:    the reconstructions using $\cP, \alpha$  optimised design parameters.}

\begin{figure}[h!]
\begin{tabular}{cccccc}
 \hspace{-0.7cm}  & 
\hspace{-0.7cm} slice 1 (train) & 
\hspace{-1.1cm} slice 2 (train) & 
\hspace{-1.1cm} slice 3 (train) & 
\hspace{-1.1cm} slice 4 (test) & 
\hspace{-1.1cm} slice 5 (train)\\
 \hspace{-0.7cm}  \parbox[t]{2mm}{\multirow{3}{*}[7em]{\rotatebox[origin=c]{90}{exact}}}  &
 \hspace{-0.45cm} \includegraphics[scale=0.37]{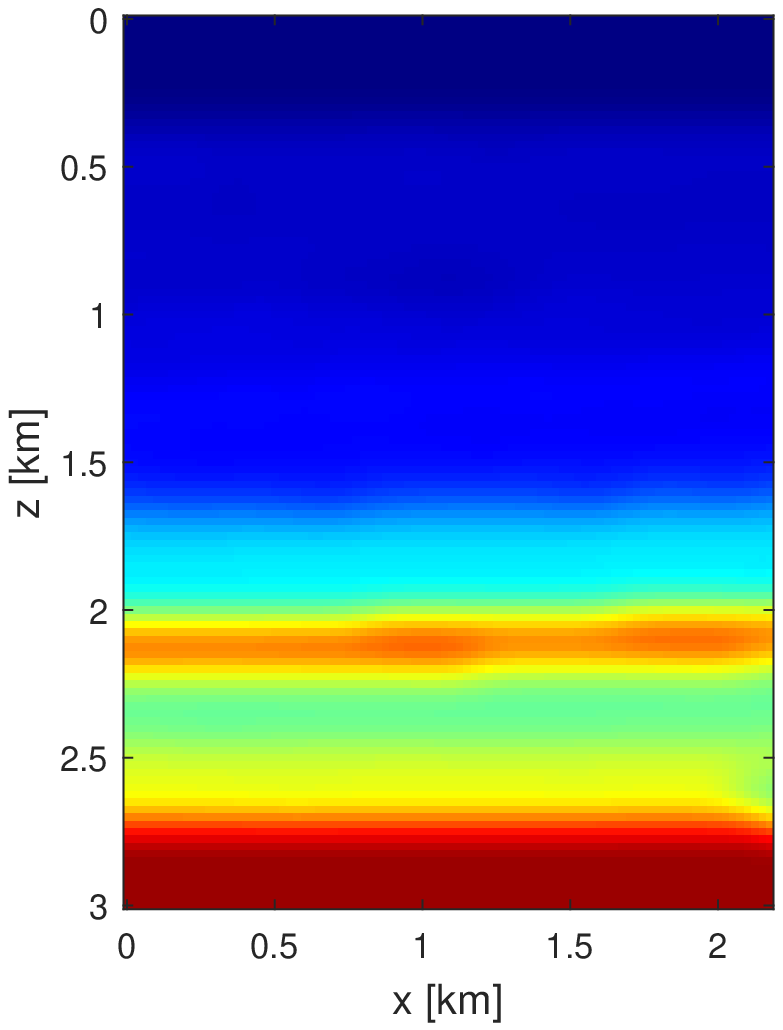} &
 \hspace{-0.7cm} \includegraphics[scale=0.37]{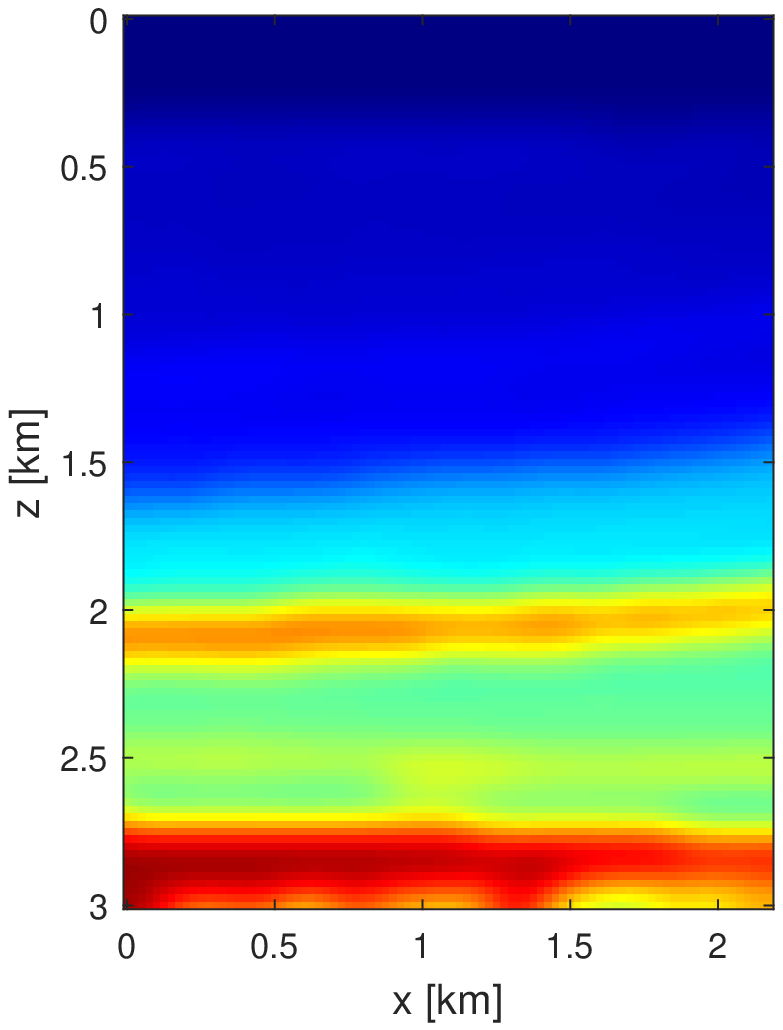} &
 \hspace{-0.7cm} \includegraphics[scale=0.37]{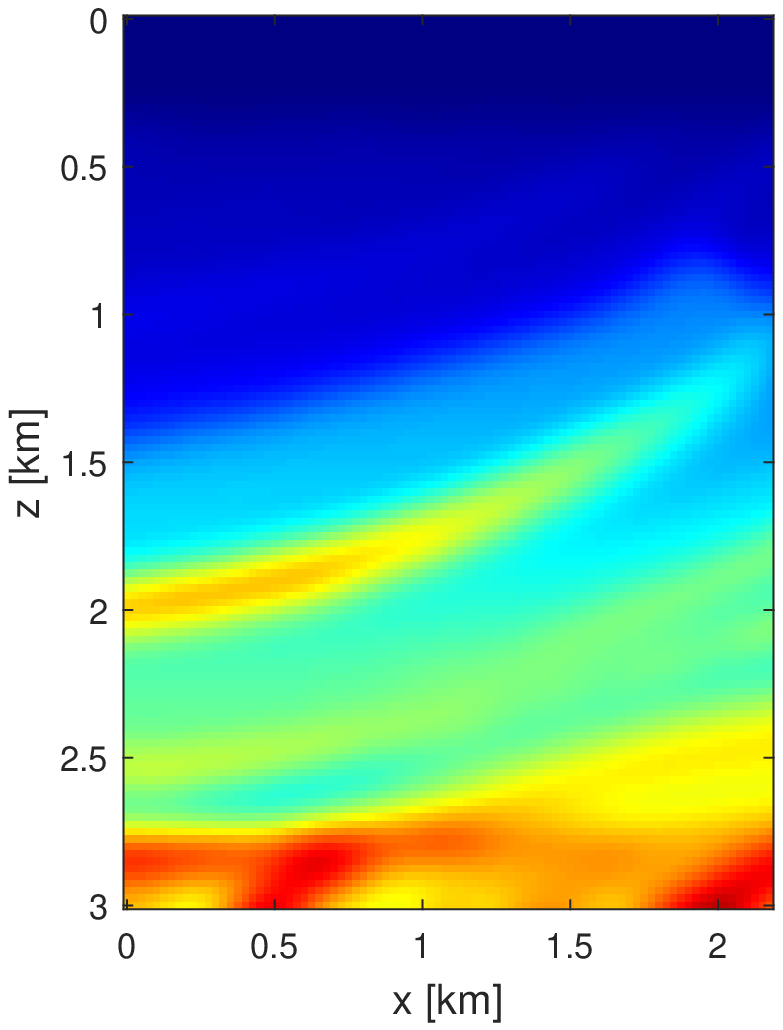} &
 \hspace{-0.7cm} \includegraphics[scale=0.37]{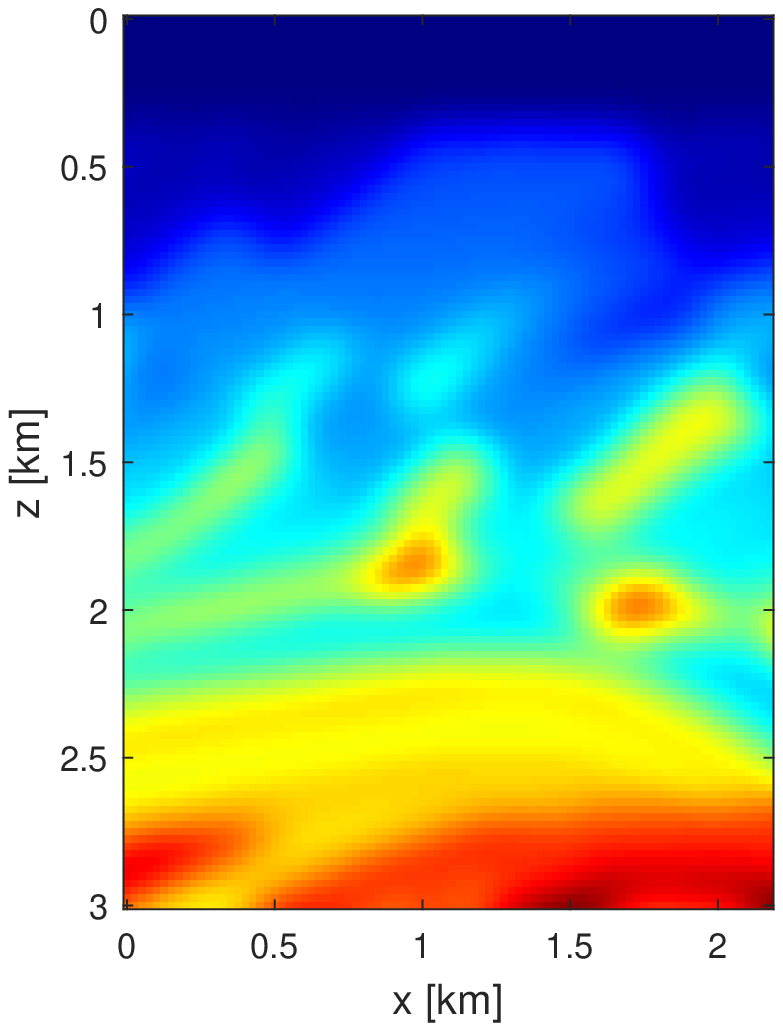} &
 \hspace{-0.7cm} \includegraphics[scale=0.37]{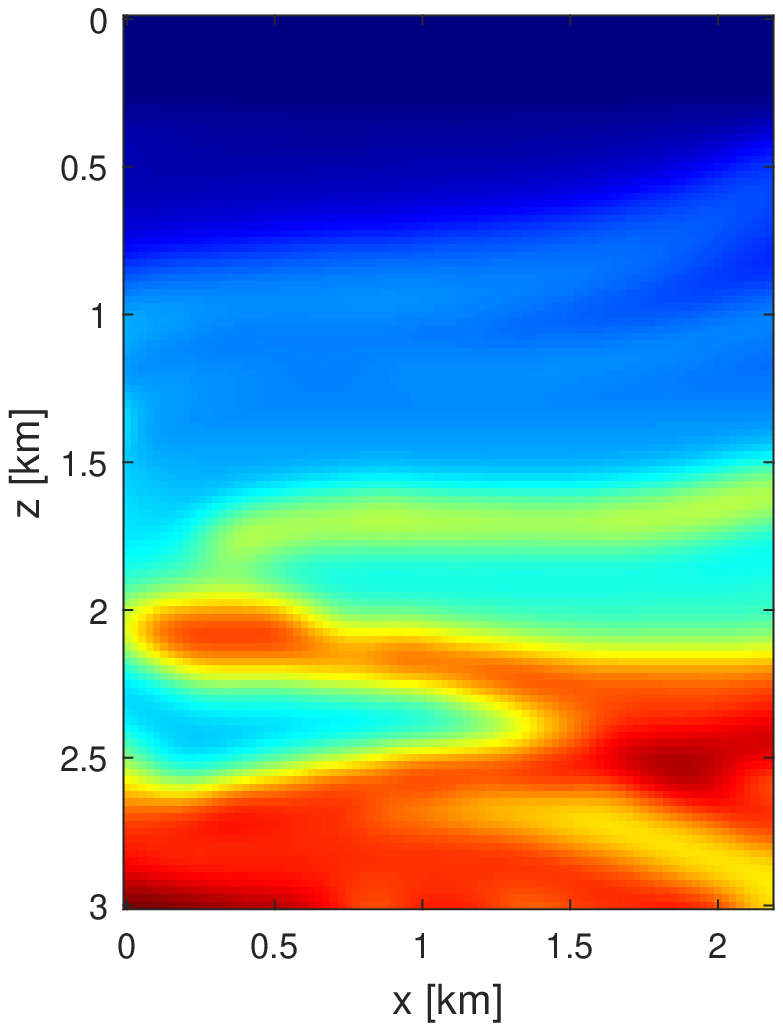}\\
 \hspace{-0.7cm}  \parbox[t]{2mm}{\multirow{3}{*}[8em]{\rotatebox[origin=c]{90}{unoptimised}}}  &
 \hspace{-0.7cm} \includegraphics[scale=0.28]{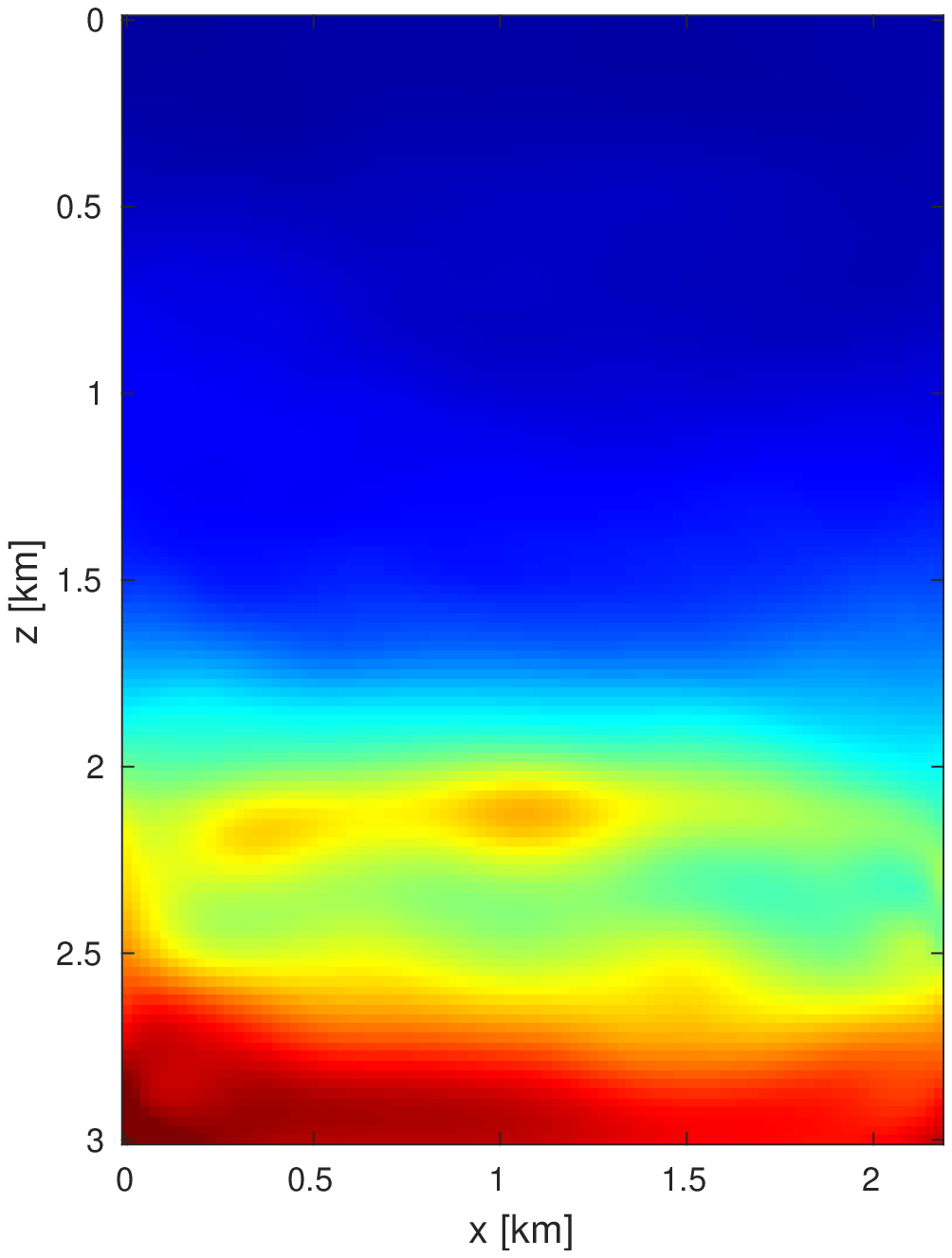} &
 \hspace{-1.0cm} \includegraphics[scale=0.28]{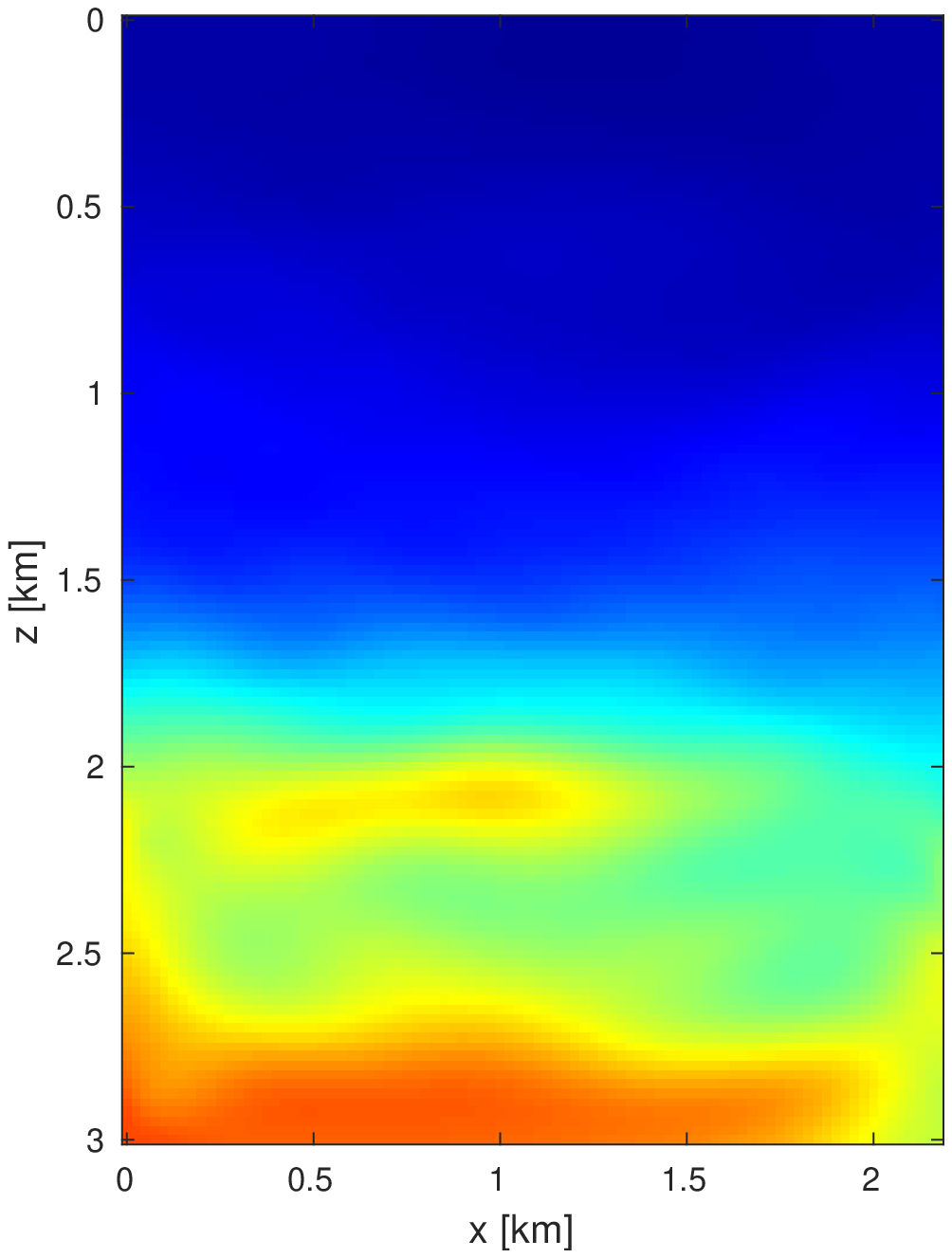} &
 \hspace{-1.0cm} \includegraphics[scale=0.28]{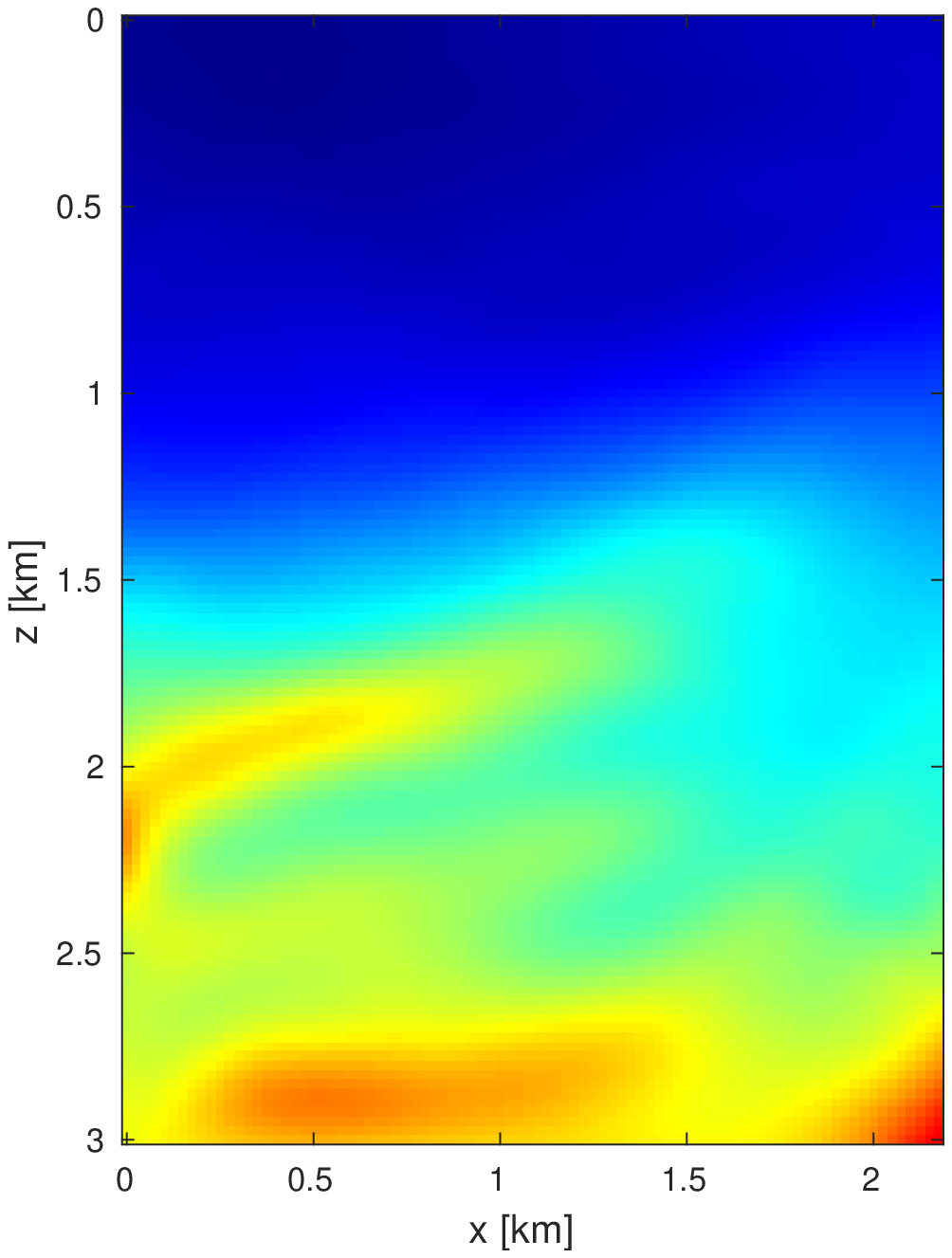} &
 \hspace{-1.0cm} \includegraphics[scale=0.28]{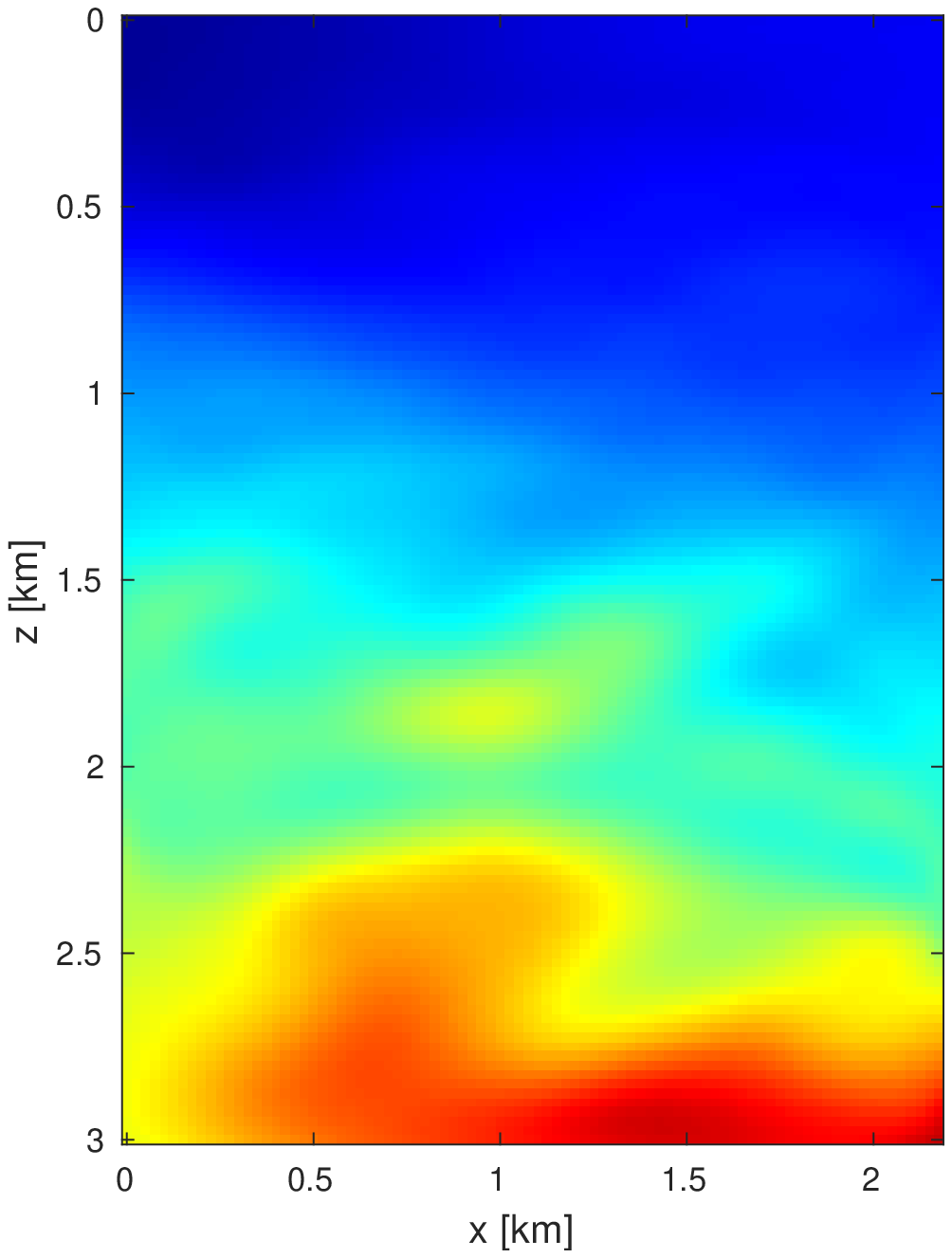} &
 \hspace{-1.0cm} \includegraphics[scale=0.28]{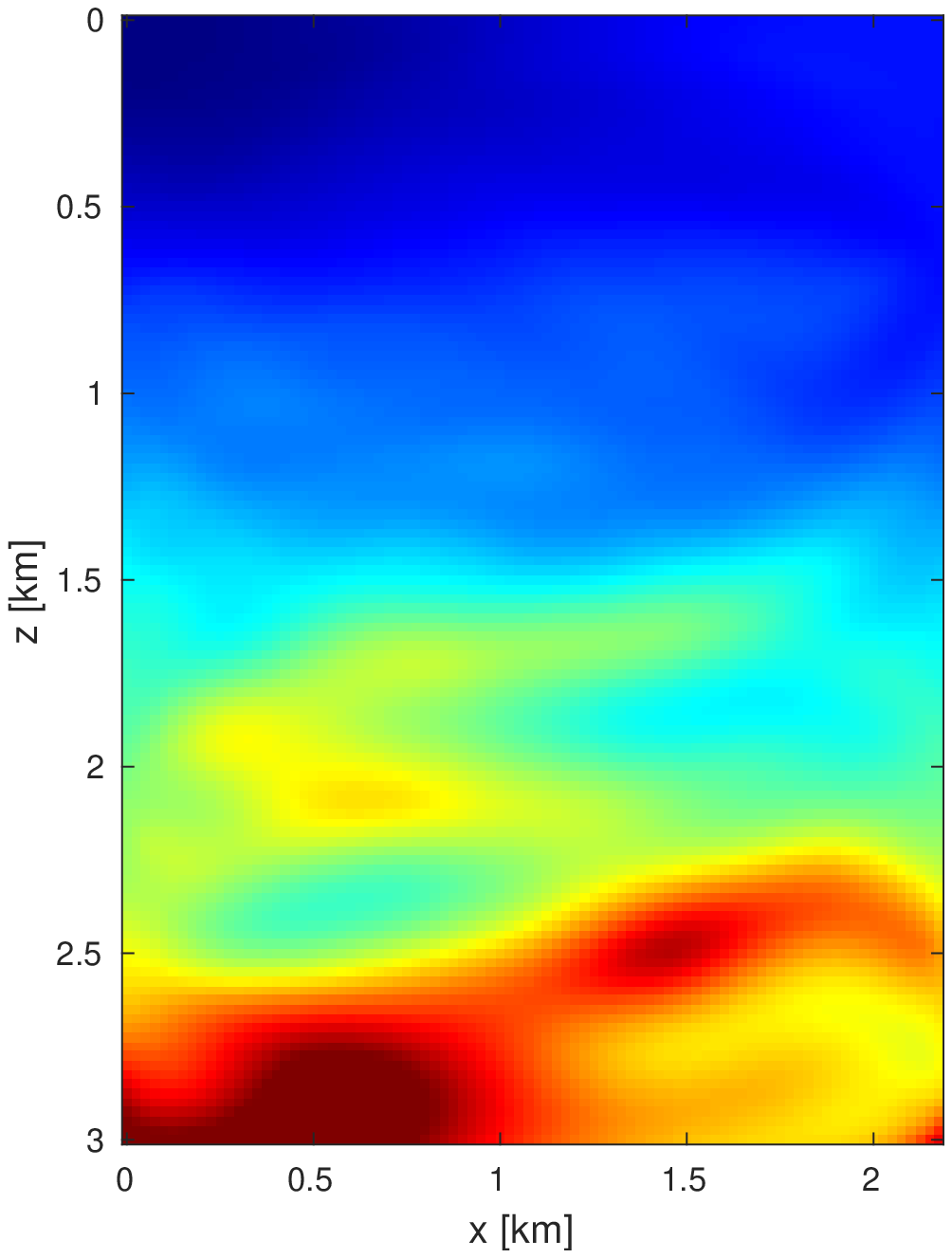}\\
  \hspace{-0.7cm}  \parbox[t]{2mm}{\multirow{3}{*}[8em]{\rotatebox[origin=c]{90}{$\mathcal{P},\alpha$-optimised}}}  &
 \hspace{-0.7cm} \includegraphics[scale=0.28]{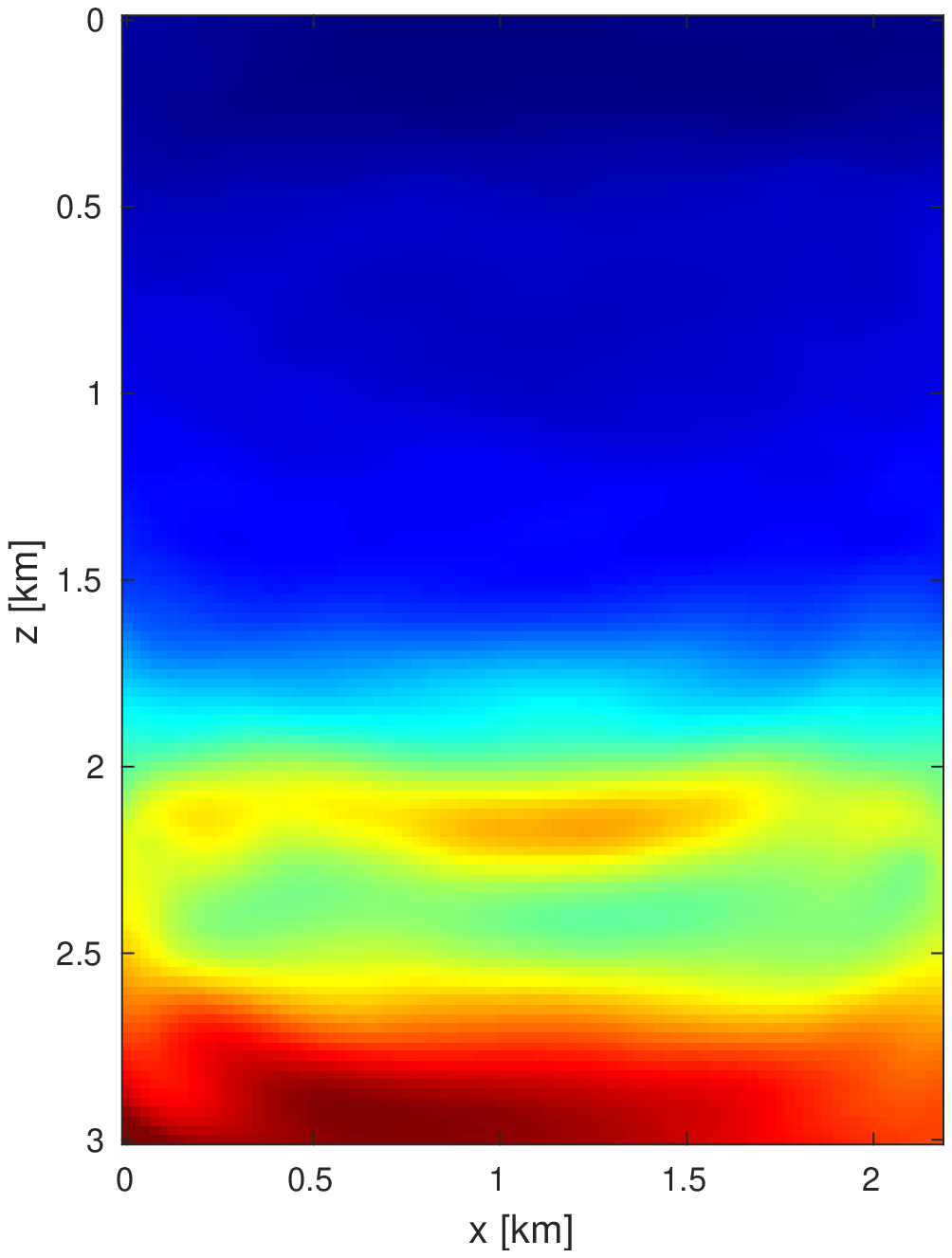} &
 \hspace{-1.0cm} \includegraphics[scale=0.28]{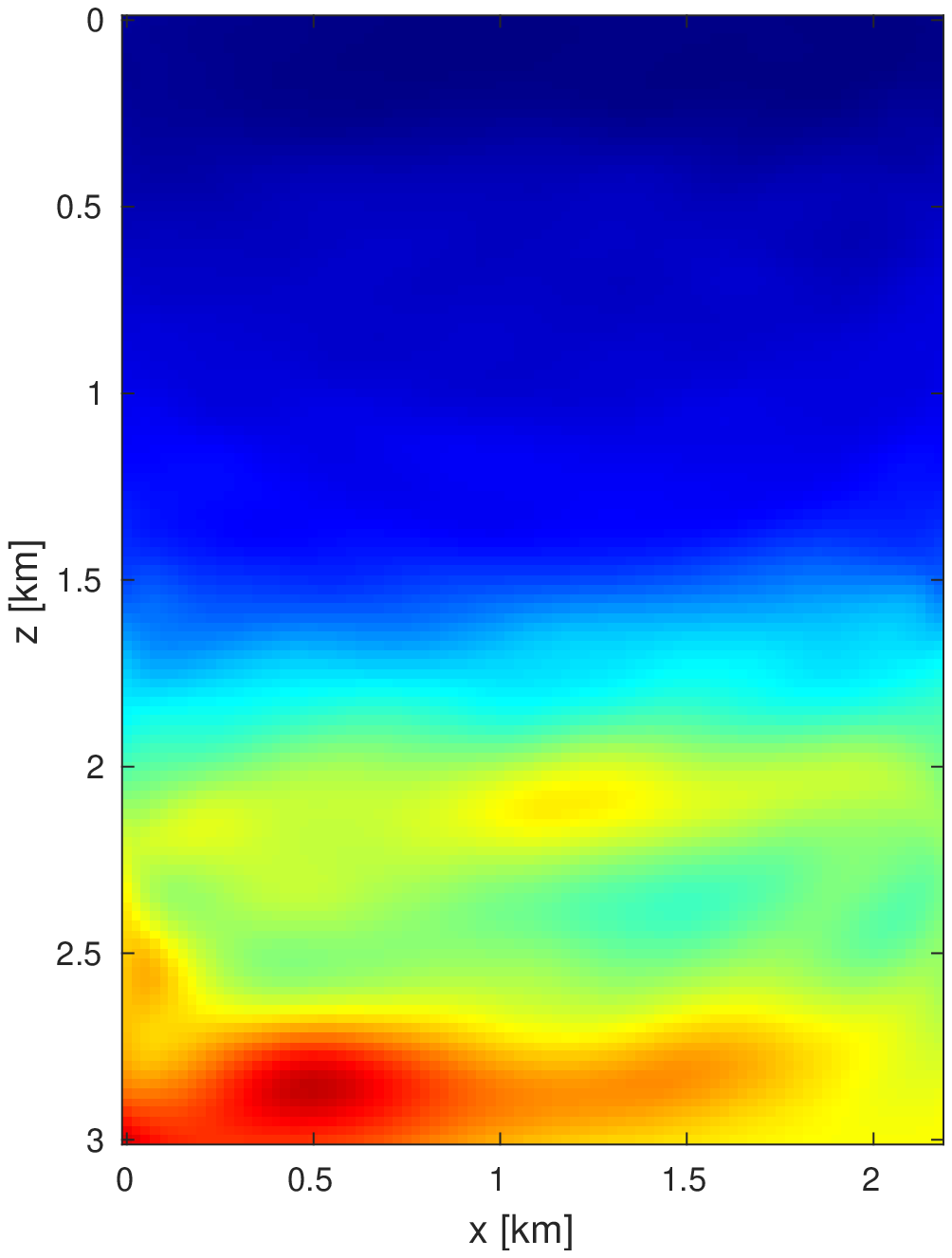} &
 \hspace{-1.0cm} \includegraphics[scale=0.28]{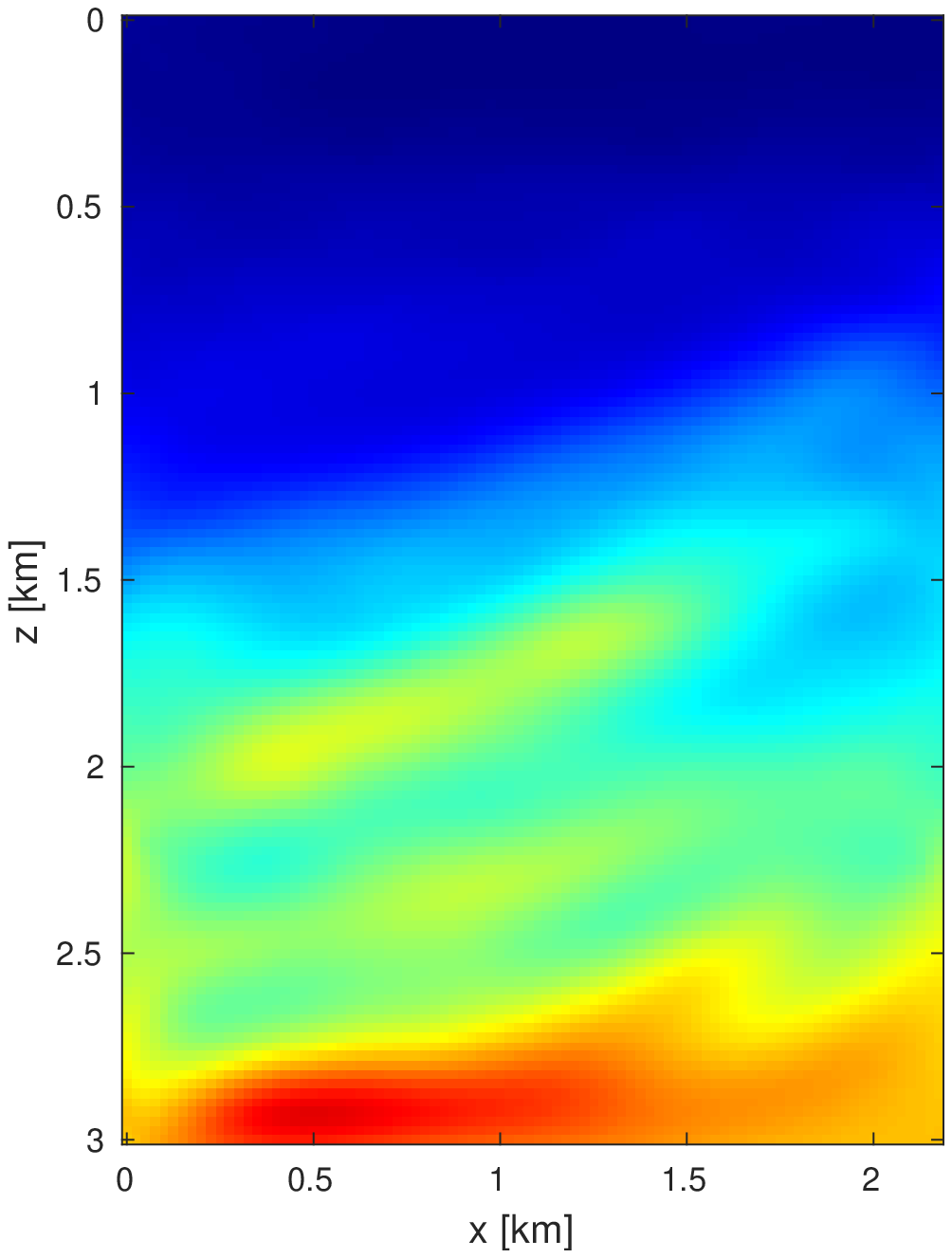} &
 \hspace{-1.0cm} \includegraphics[scale=0.28]{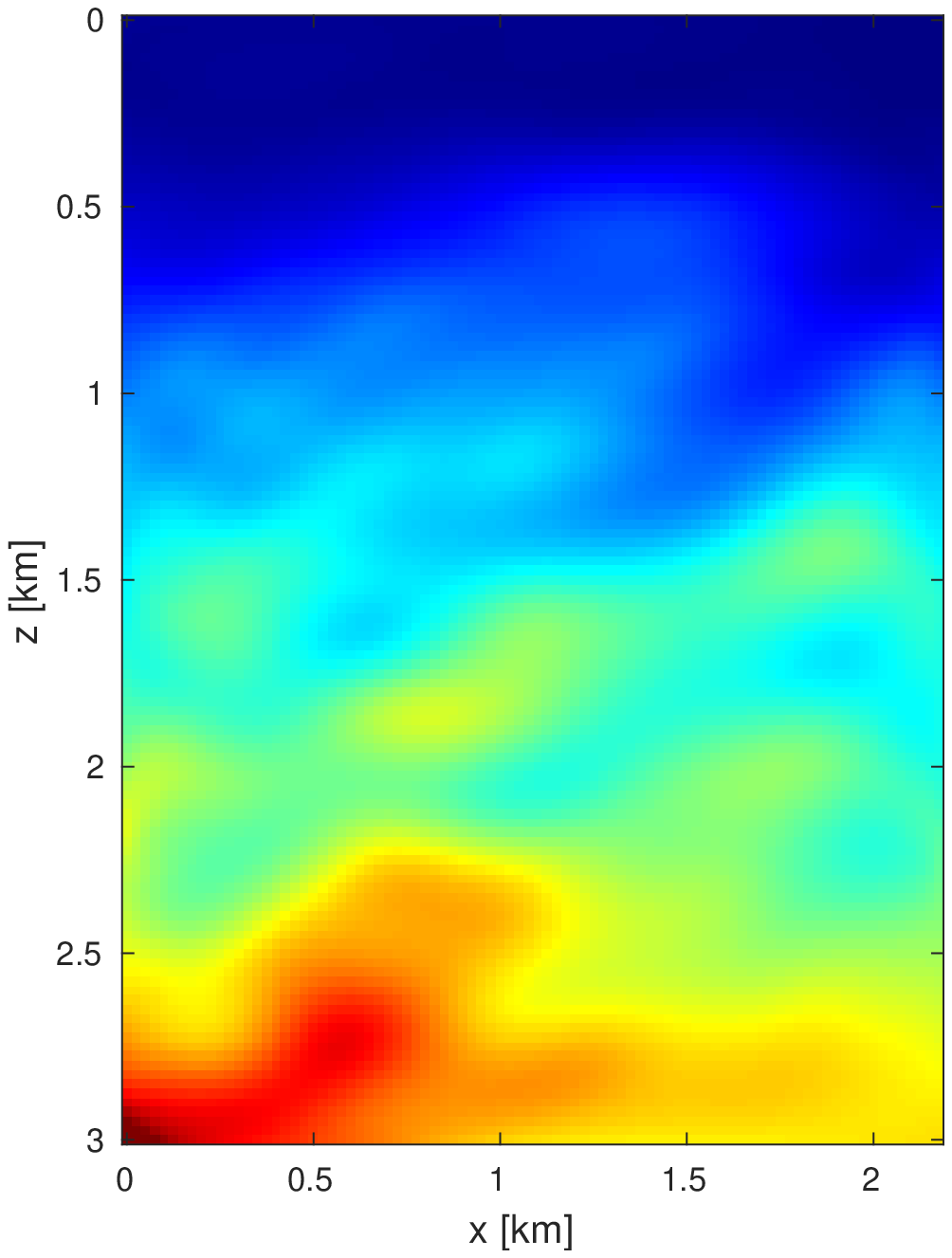} &
 \hspace{-1.0cm} \includegraphics[scale=0.28]{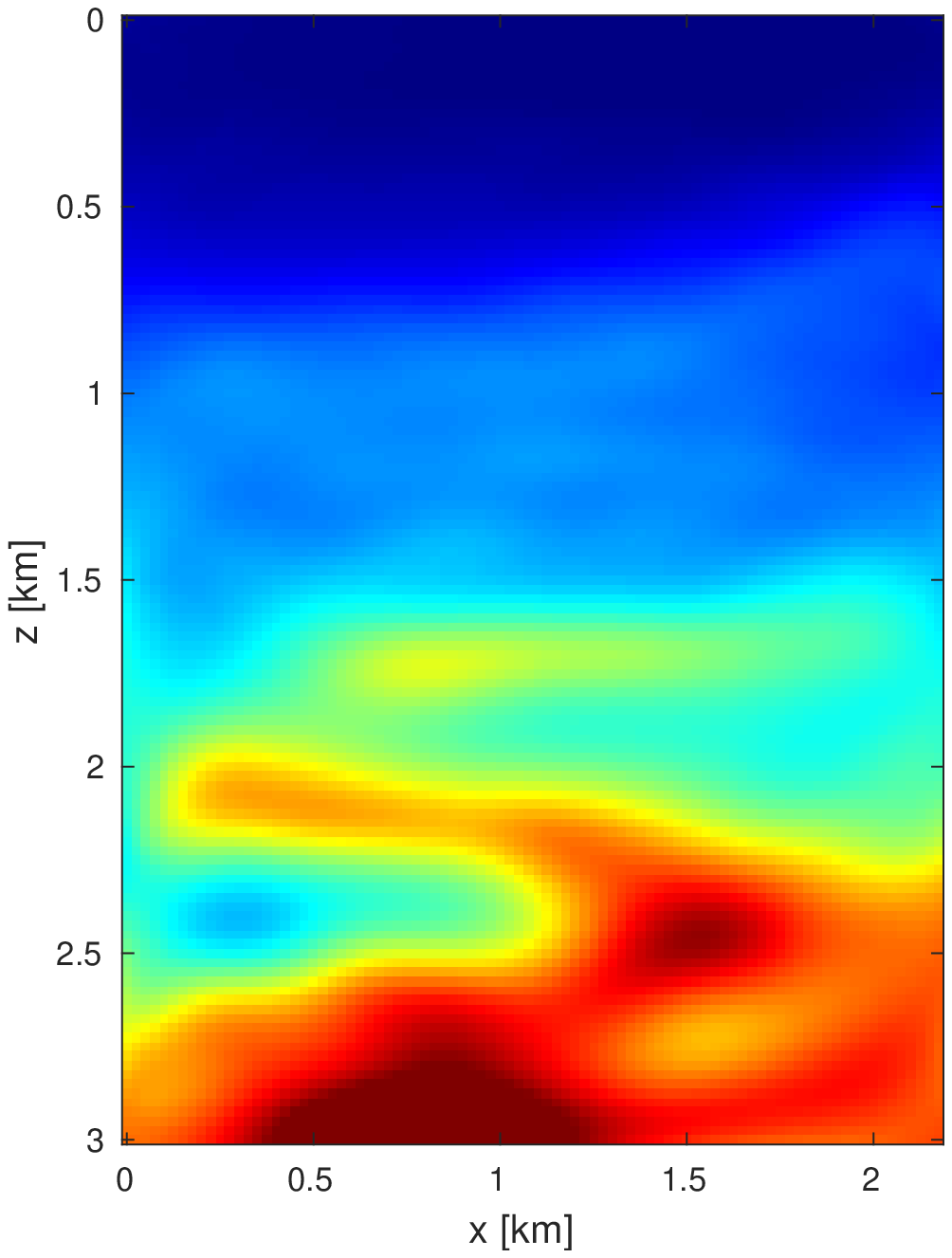}
\end{tabular}
\caption{{First row: the smooth Marmousi model divided into individual slices.
  Second row:  reconstructions using the non-optimised design parameters. Third 
  row: FWI reconstructions using the  $\mathcal{P}$ - $\alpha$ optimised designed parameters.
  Slices 1, 2, 3 and 5 were used for training, while slice 4 was used for testing.
  Colour maps are scaled to be in the same range; i.e., the colours correspond to the
  same values in each slice.}}\label{fig:reconstr}
\end{figure}

\begin{figure}[h!]
\begin{tabular}{cccccc}
 \hspace{-0.5cm}  & 
\hspace{-0.5cm} slice 1 (train) & 
\hspace{-0.5cm} slice 2 (train) & 
\hspace{-0.5cm} slice 3 (train) & 
\hspace{-0.5cm} slice 4 (test) & 
\hspace{-0.5cm} slice 5 (train)\\
 \hspace{-0.5cm}  \parbox[t]{2mm}{\multirow{3}{*}[7em]{\rotatebox[origin=c]{90}{unoptimised}}}  &
  \hspace{-0.5cm} \includegraphics[scale=0.27]{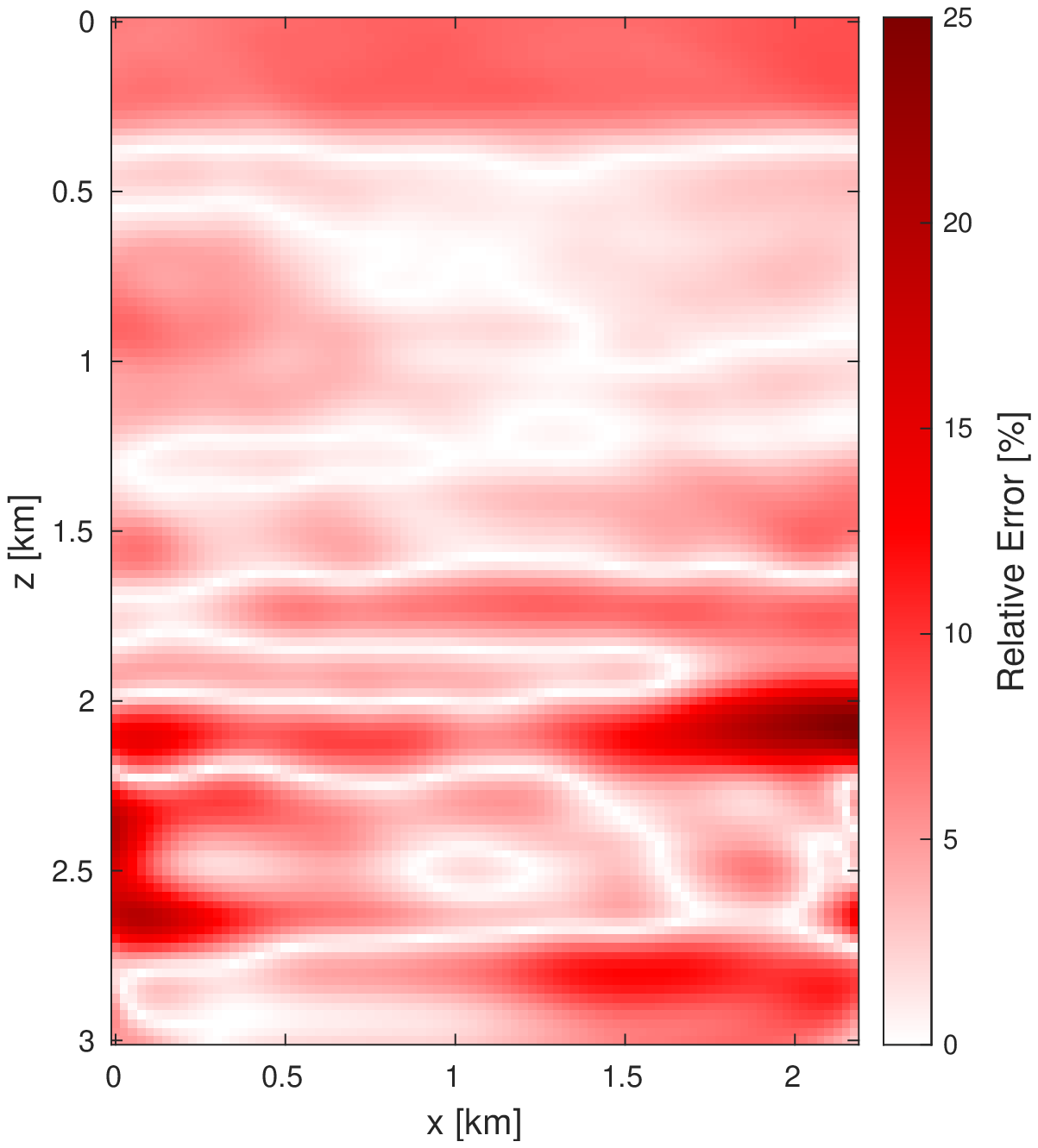} &
\hspace{-0.5cm} \includegraphics[scale=0.27]{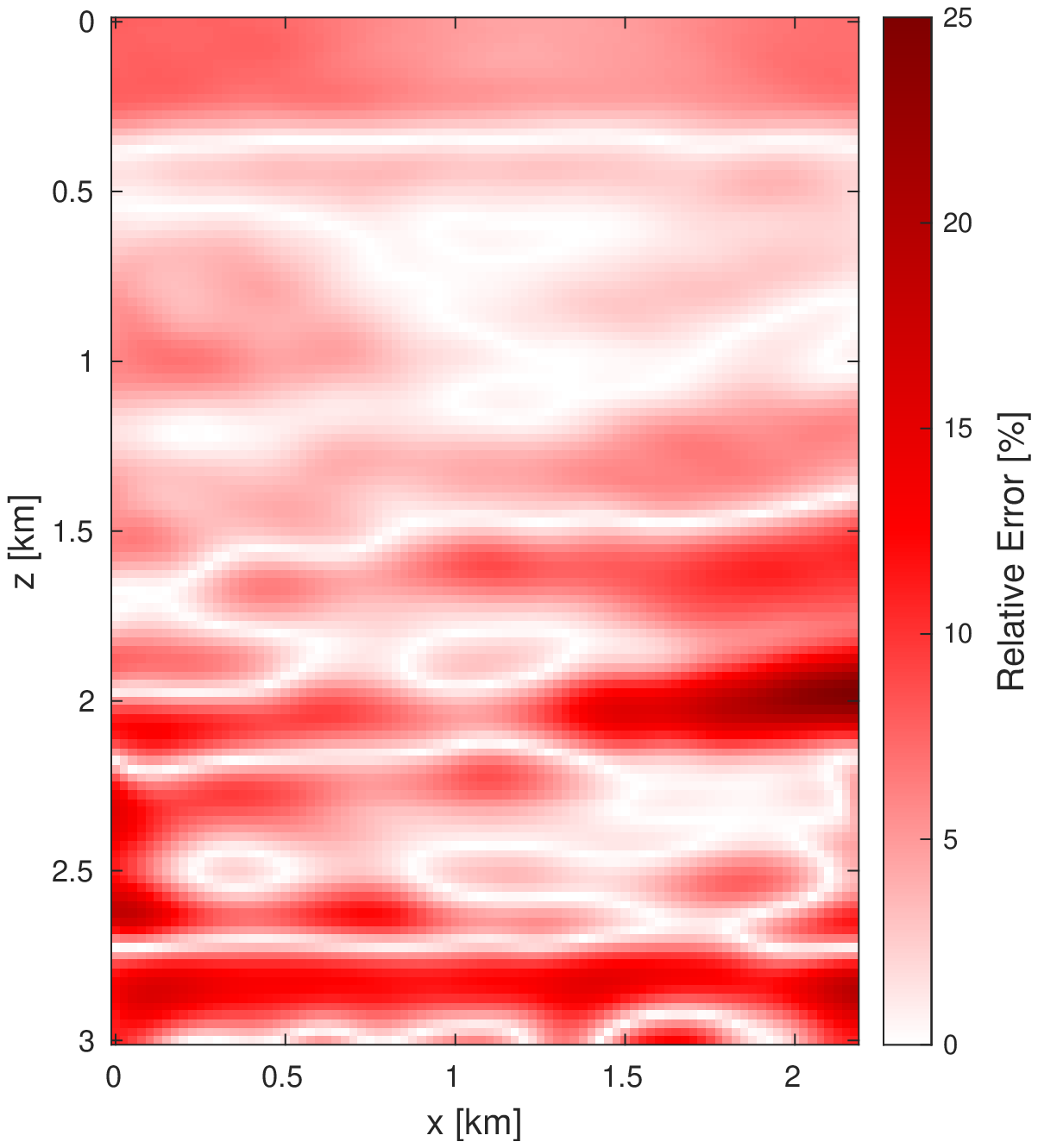} &
\hspace{-0.5cm} \includegraphics[scale=0.27]{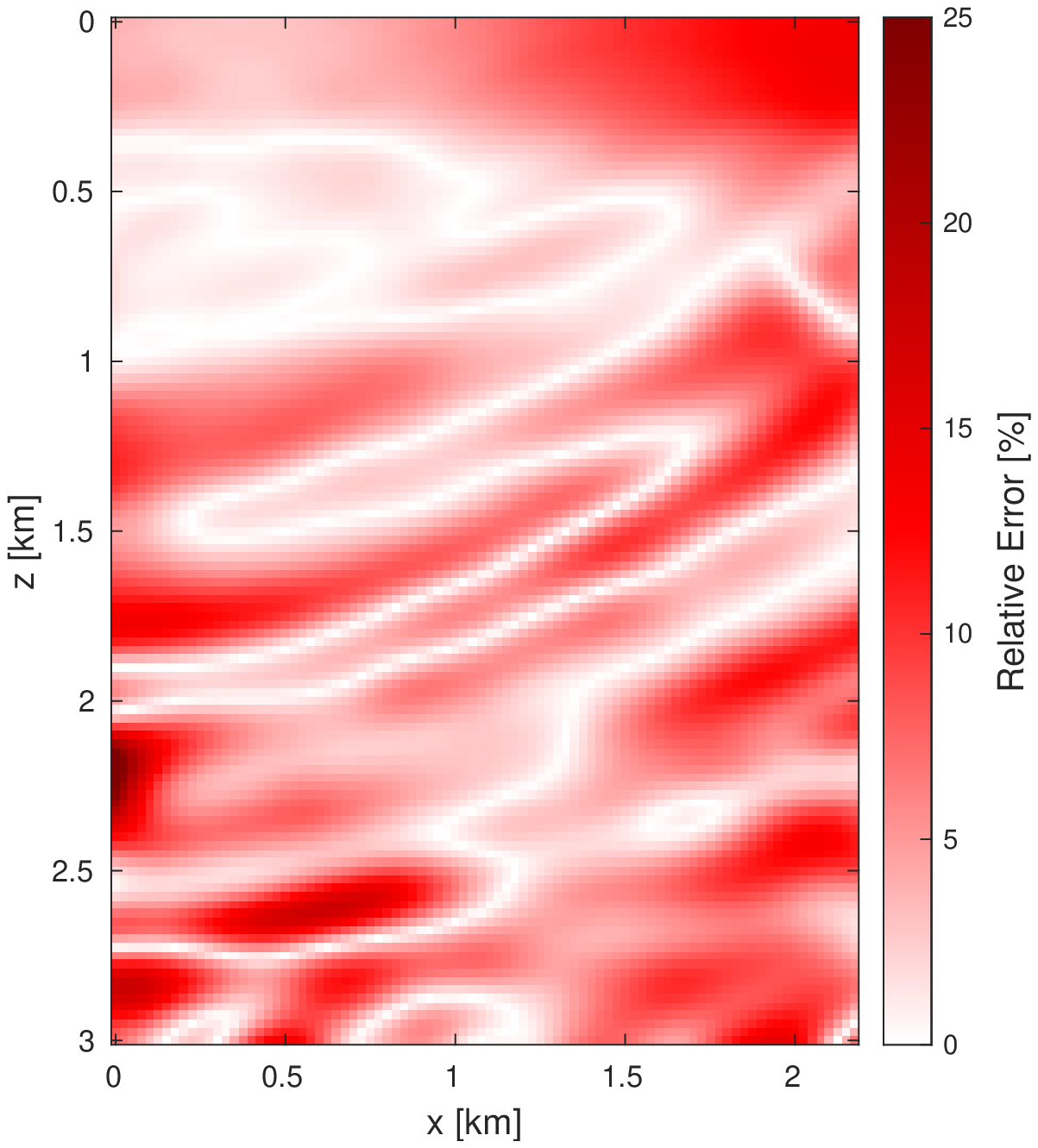} &
\hspace{-0.5cm} \includegraphics[scale=0.27]{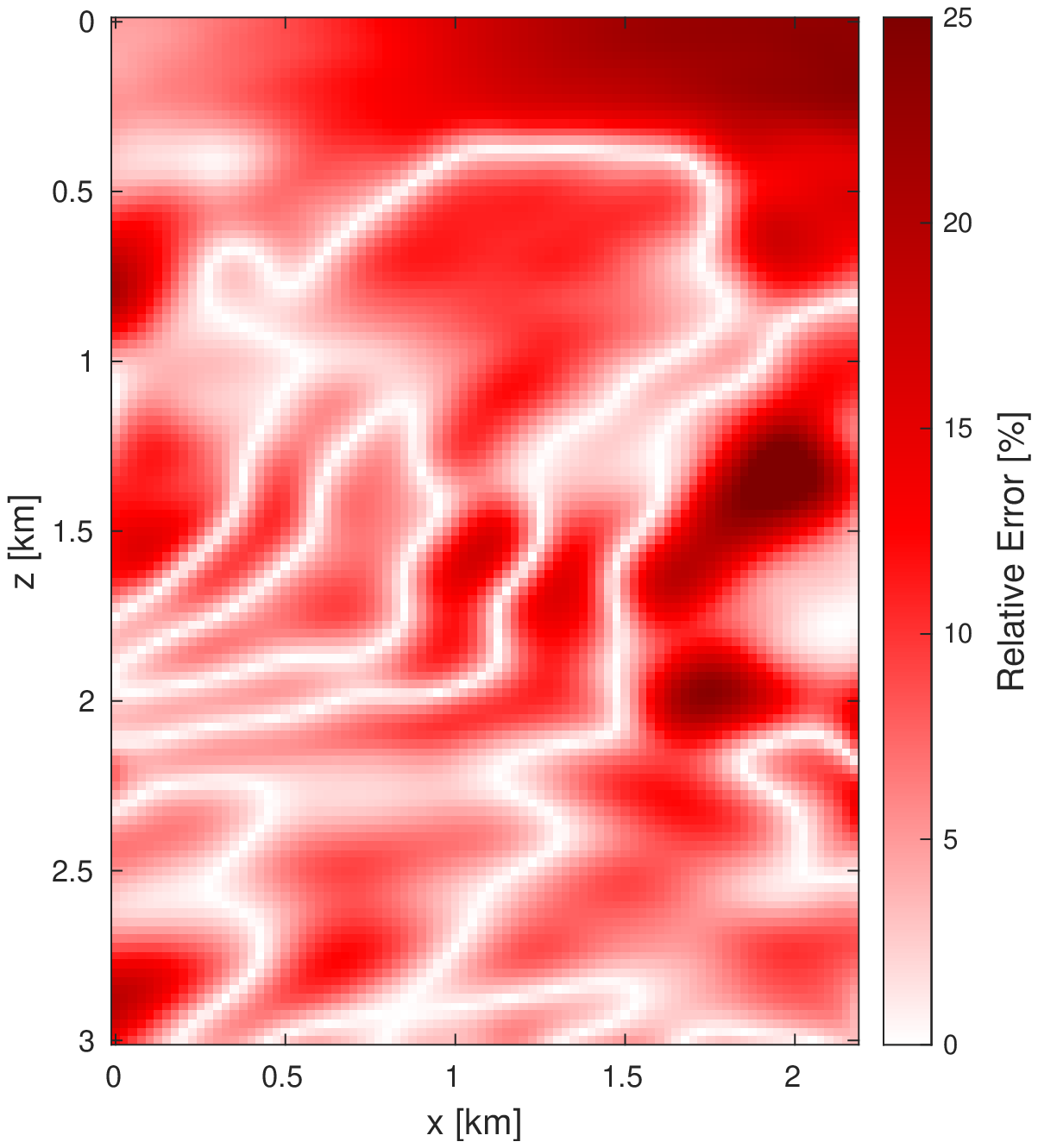} &
\hspace{-0.5cm} \includegraphics[scale=0.27]{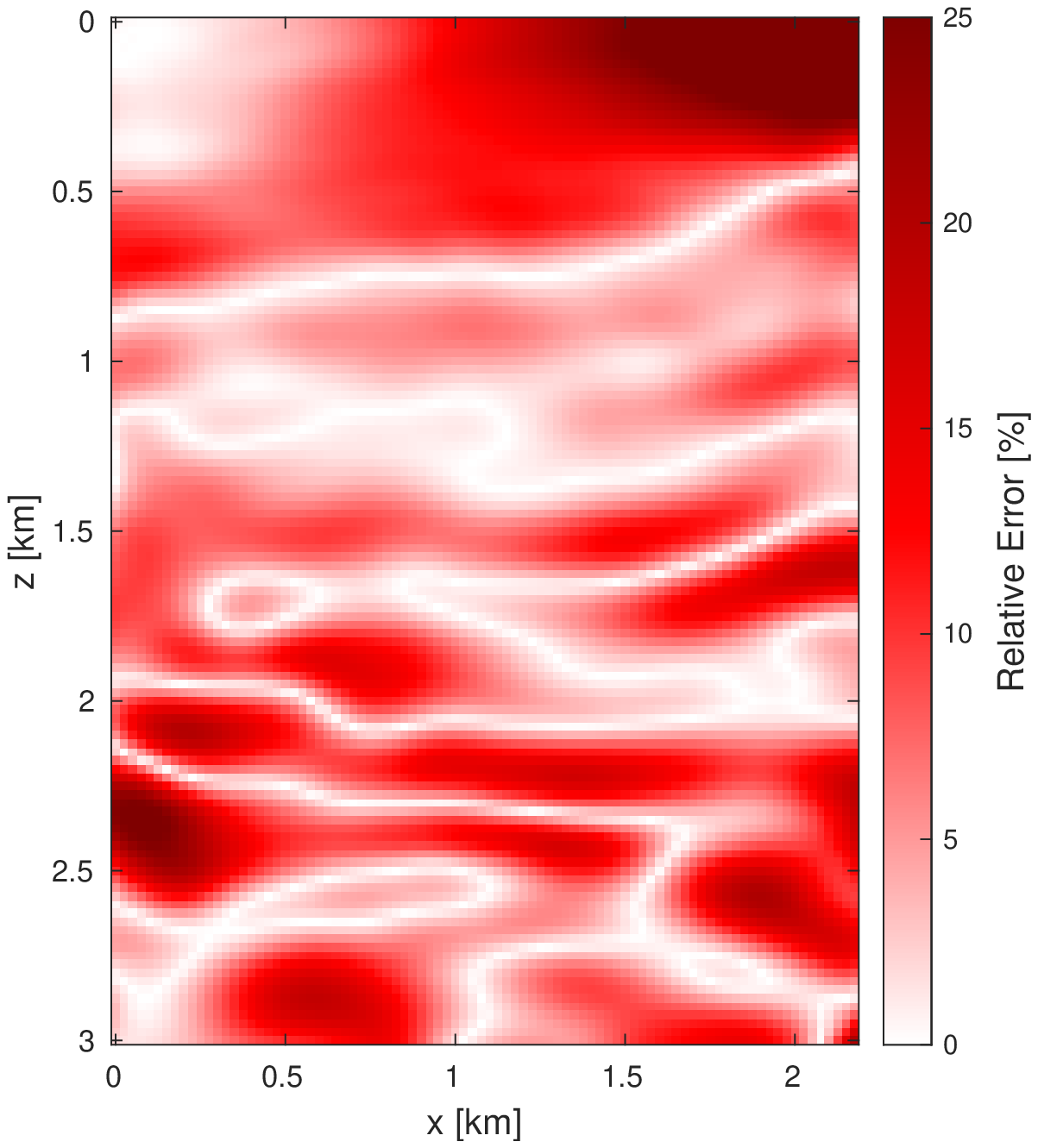}\\
  \hspace{-0.5cm}  \parbox[t]{2mm}{\multirow{3}{*}[7em]{\rotatebox[origin=c]{90}{$\mathcal{P},\alpha$-optimised}}}  &
 \hspace{-0.5cm} \includegraphics[scale=0.27]{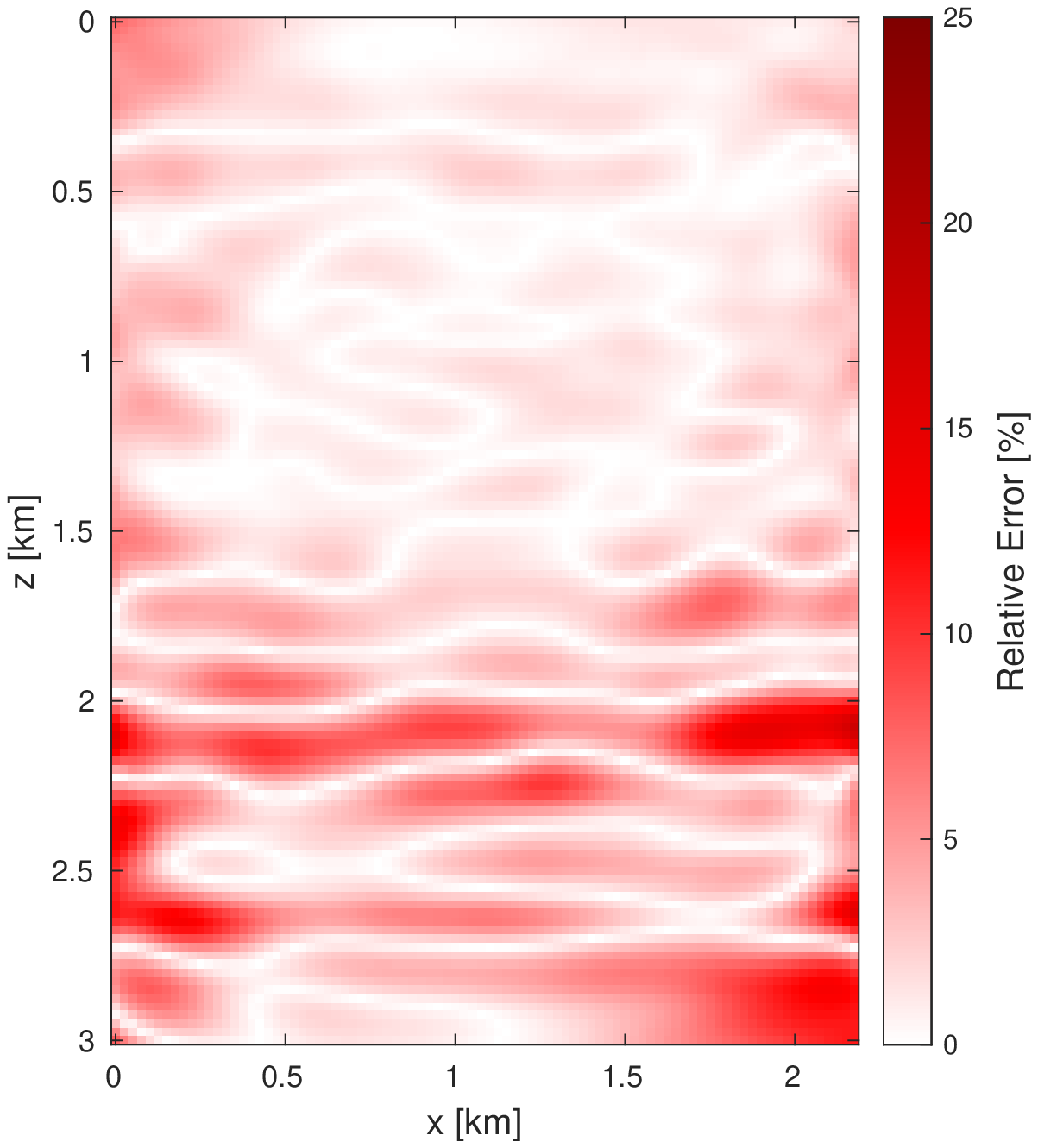} &
\hspace{-0.5cm} \includegraphics[scale=0.27]{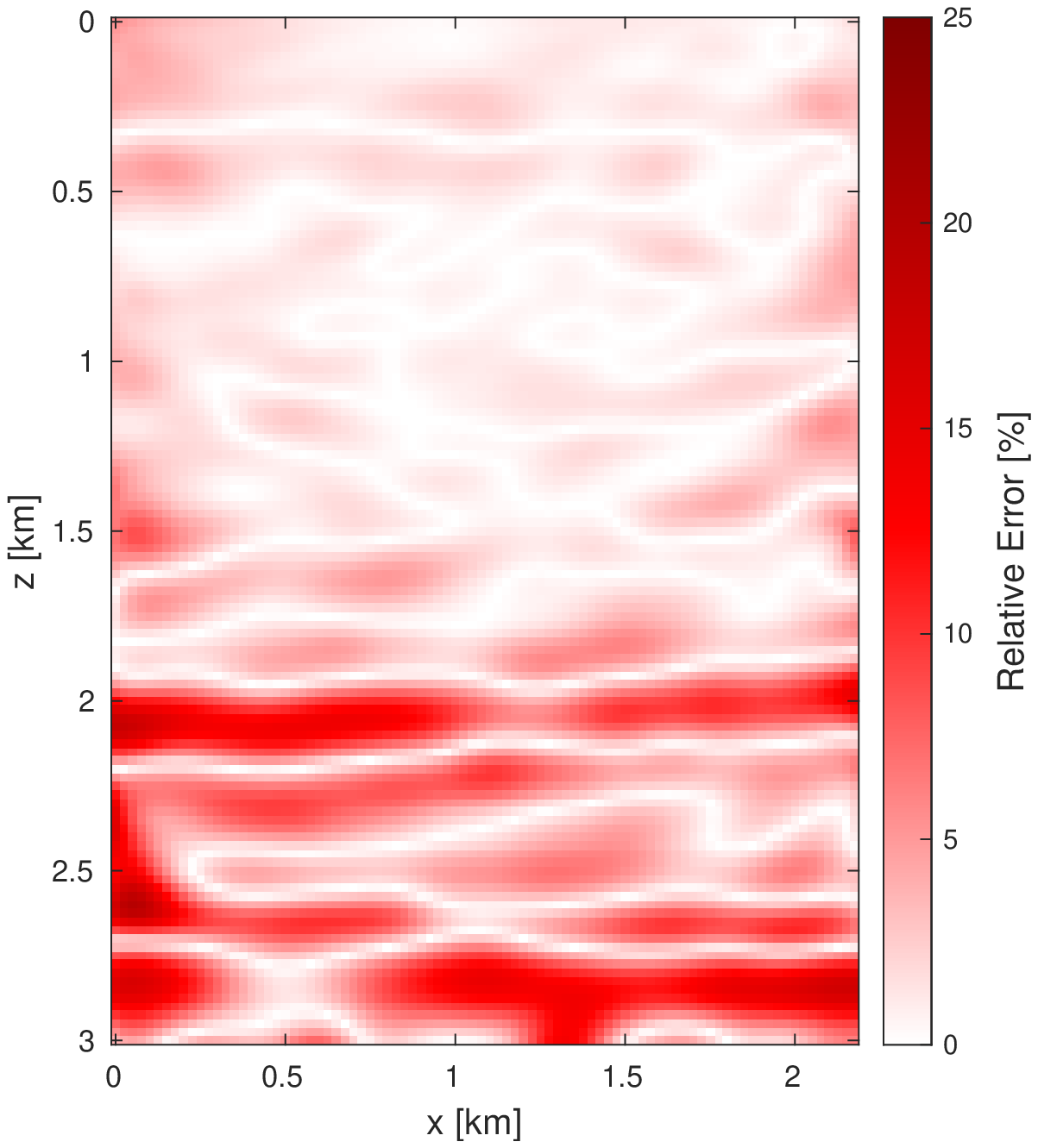} &
\hspace{-0.5cm} \includegraphics[scale=0.27]{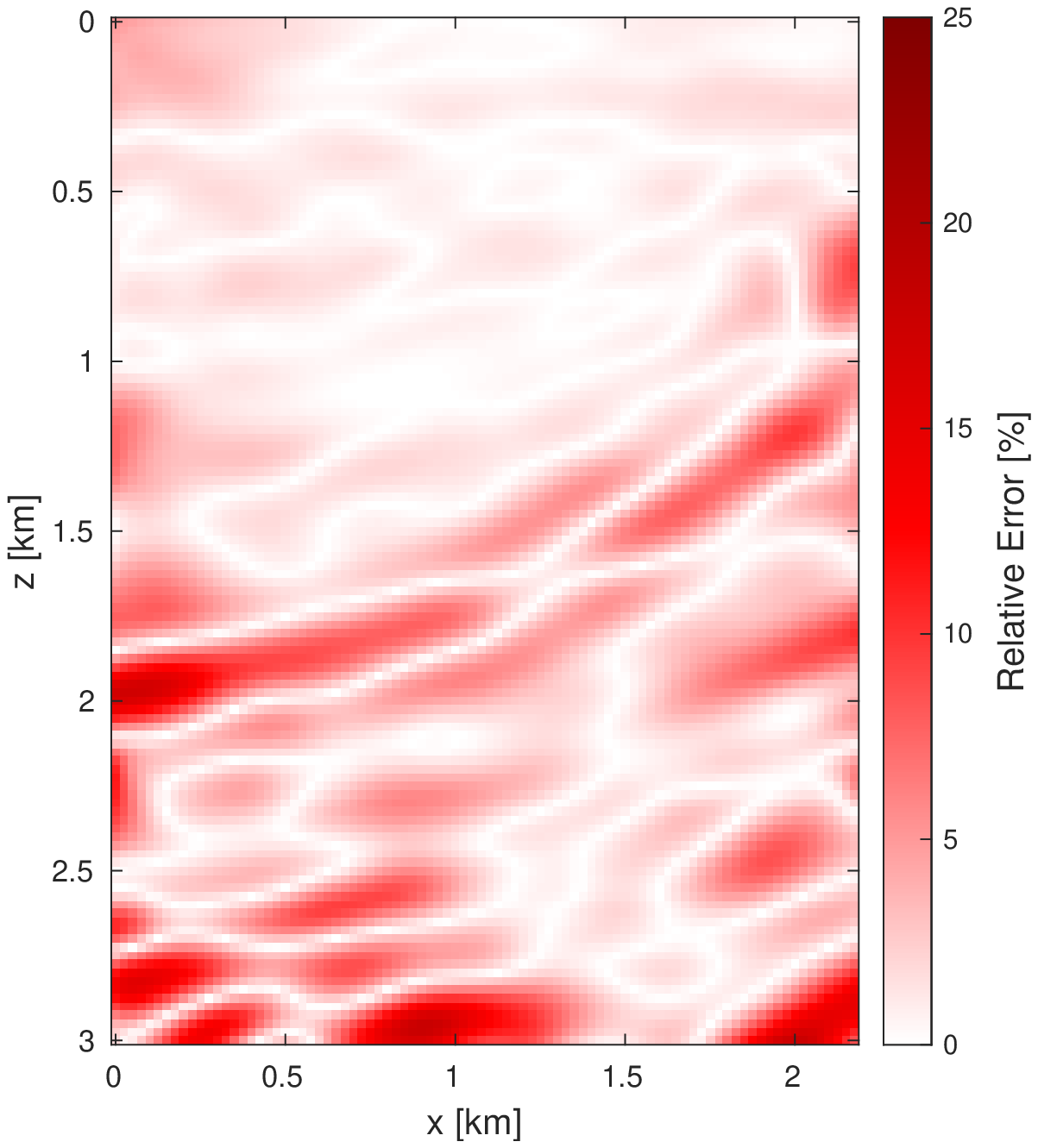} &
\hspace{-0.5cm} \includegraphics[scale=0.27]{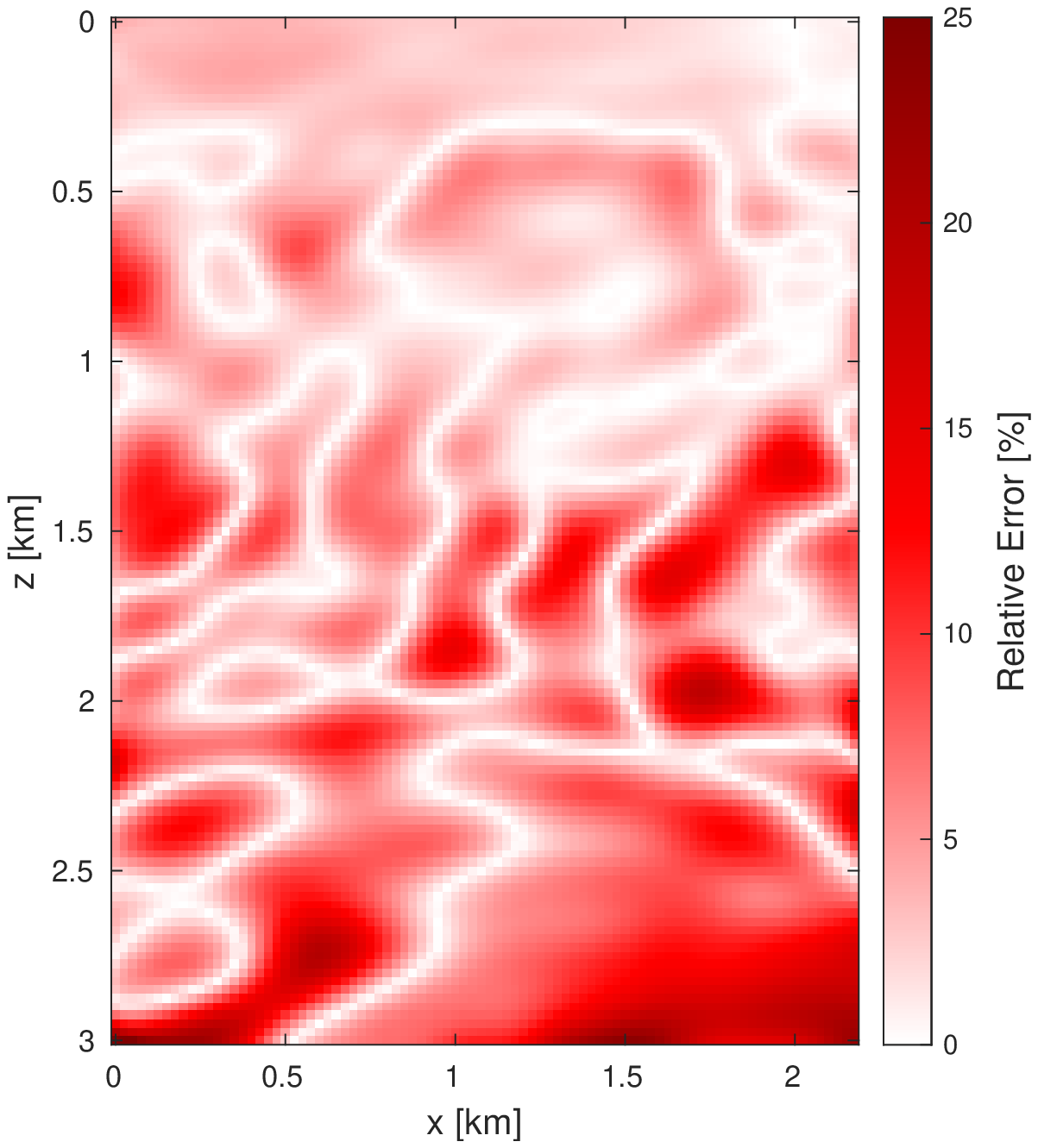} &
\hspace{-0.5cm} \includegraphics[scale=0.27]{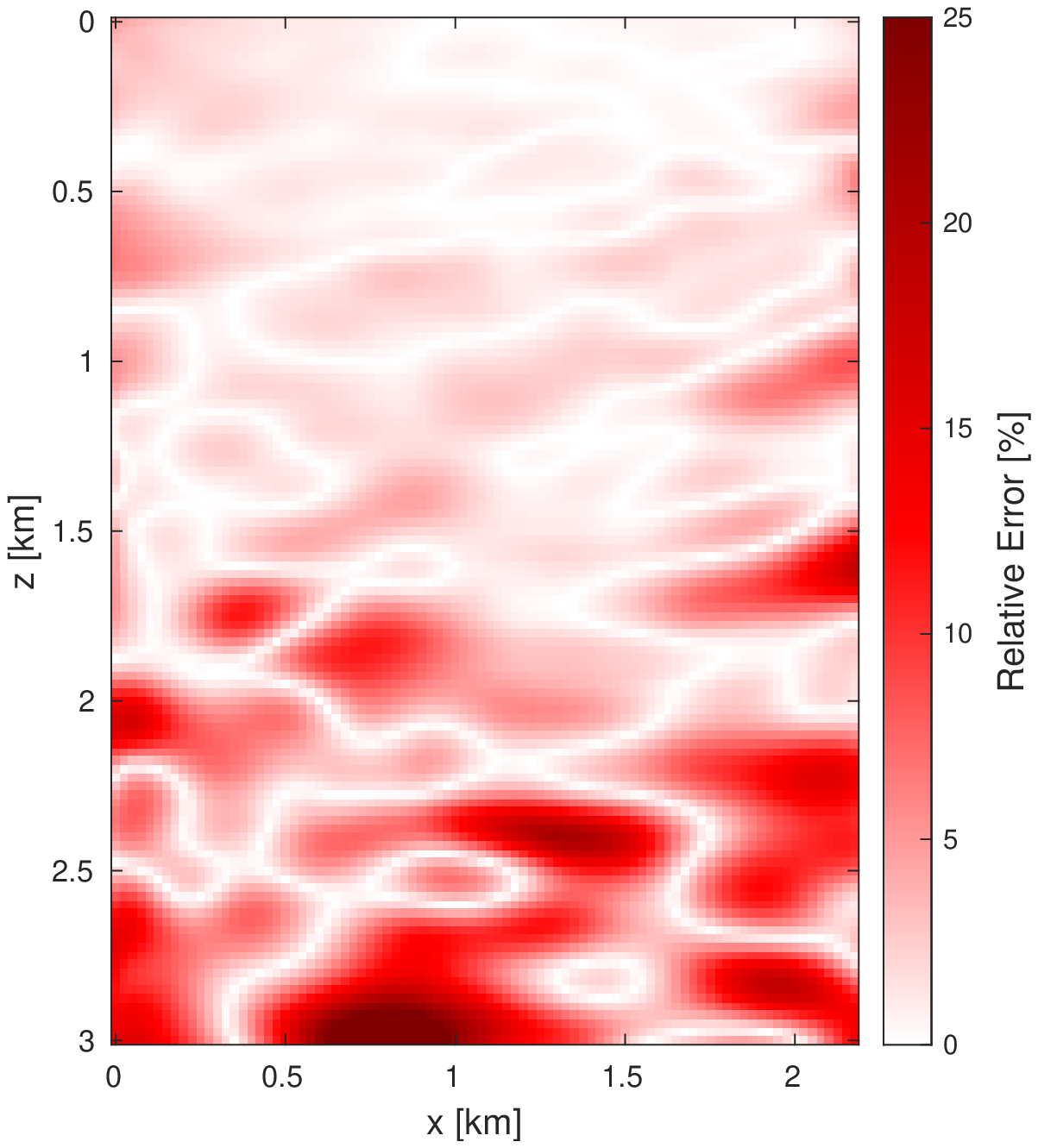}\\
\end{tabular}
\caption{{Absolute value of  the Mean Relative Percentage Error (MRE) defined in \eqref{relerr} for each of the scenarios in Figure 
  Slices 1, 2, 3 and 5 were used for training, while slice 4 was used for testing.}}\label{fig:relerr}
\end{figure}



{Examining the  unoptimised  results  for the training models,  we see    
that the large-scale structures are roughly  identifed.
However, the shapes and wavespeed values are not always correct.
For example, in Slices 1 and 2, the layer of higher wavespeed at depth approximately  2.1 km  has wavespeed values that are too low, and the thickness and length of the layer itself is far from correct. 
For Slice 5, the long thin arm ending at depth 2.0 km and width 0km is essentially missing.
For the testing model (Slice 4), the reconstruction  is relatively poor, for example the three `islands' at depth 1.25 - 2.0km are hardly visible.
After   optimisation,  all these issues are substantially improved on.
The images have appeared to `sharpen' up, particularly in the upper part of the domain, and the finer features of structures and boundaries between the layers have become more  evident. Examples of features which are now more visible include:  the thin strip of high wavespeed in slices 1 and 2 at depth approximately 2.1 km,  the two curved layers of higher wavespeed which sweep up to the right in Slice 3 and the three islands in slice 4 (mentioned above).   }


{When assessing these images we should bear in mind that this is a very underdetermined problem, we are reconstructing  10648 parameters using 5 sources, 5 sensors and 4 frequencies. So what is important is the improvement due to optimisation, rather than the absolute accuracy of the reconstructions.  

To further illustrate the improvemment obtained by optimisation, in Figure \ref{fig:relerr} we plot the quantity MRE (defined in  \eqref{relerr}) for  each of the cases  in Figure \ref{fig:reconstr}.
Since darker shading represents larger error, with white indicating zero error, we see more clearly here  the  conspicuous benefit of optimisation of the design parameters. }




{The optimised sensor positions $\mathcal{P}$ are displayed in Figures \ref{fig:Exp2StartFWI} (for $\cP$ optimisation only) and \ref{fig:Exp2Opt} (for $\cP,  \alpha$ optimisation).   (Both are  superimposed on Slice 2).  We note that the final sensor positions are very similar in the case of both $\cP$ and $\cP, \alpha$ optimisation. 
}


\begin{figure}[h!]
 \hspace*{0.75cm}
 \begin{subfigure}[c]{0.31\textwidth}
 \hspace*{-0.5cm}\includegraphics[scale=0.35]{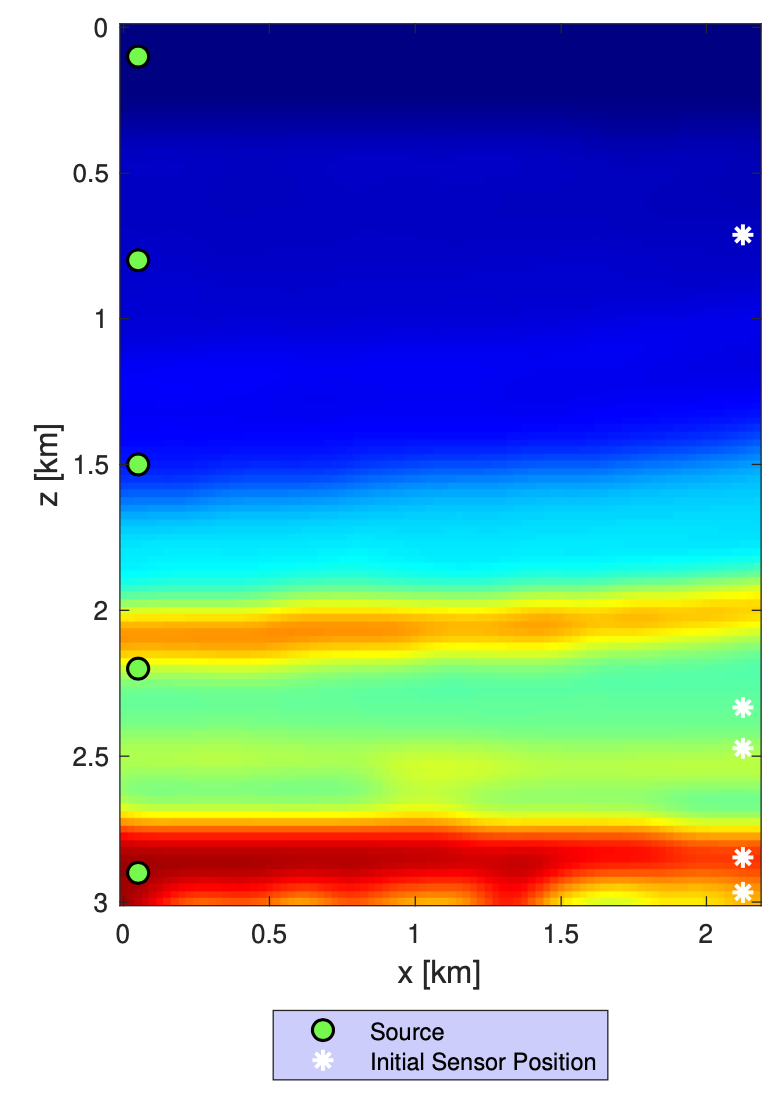}
               \caption{
\label{fig:Exp2Start}}
\end{subfigure}
\hspace{-0.5cm}
\begin{subfigure}[c]{0.31\textwidth}
 \hspace*{-0.1cm}\includegraphics[scale=0.35]{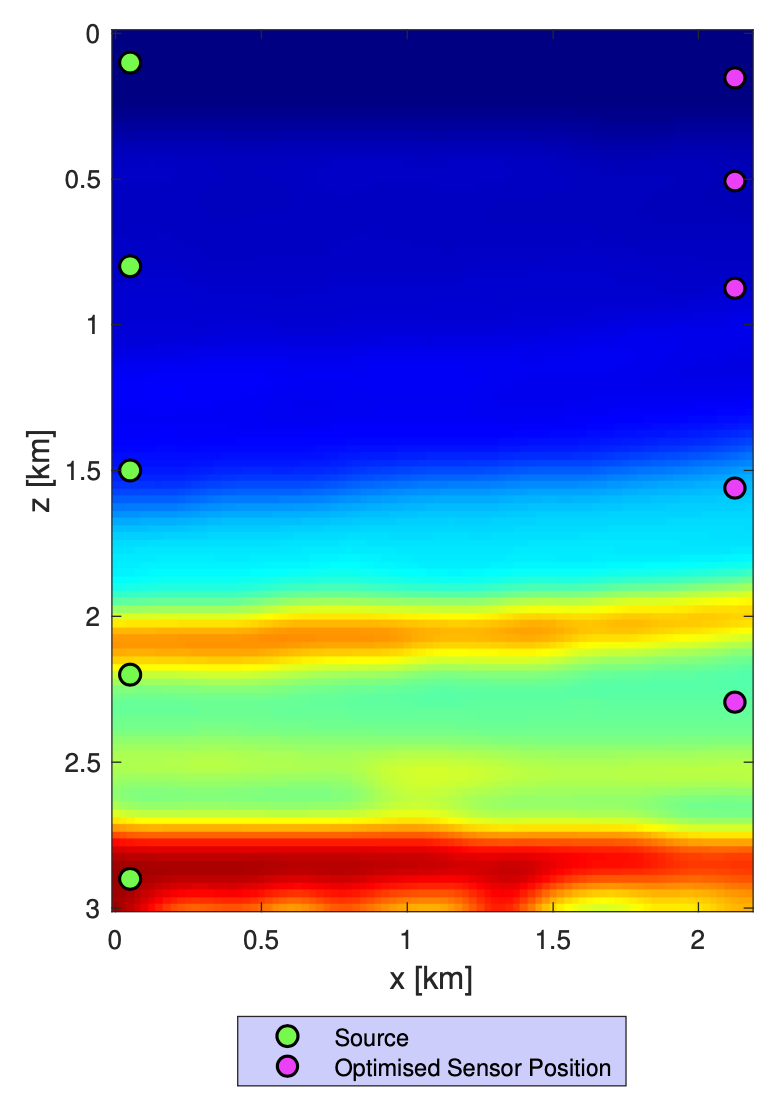}
\caption{
\label{fig:Exp2StartFWI}}
\end{subfigure}
\hspace{-0.1cm}
\begin{subfigure}[c]{0.31\textwidth}
\hspace*{-0.1cm}\includegraphics[scale=0.35]{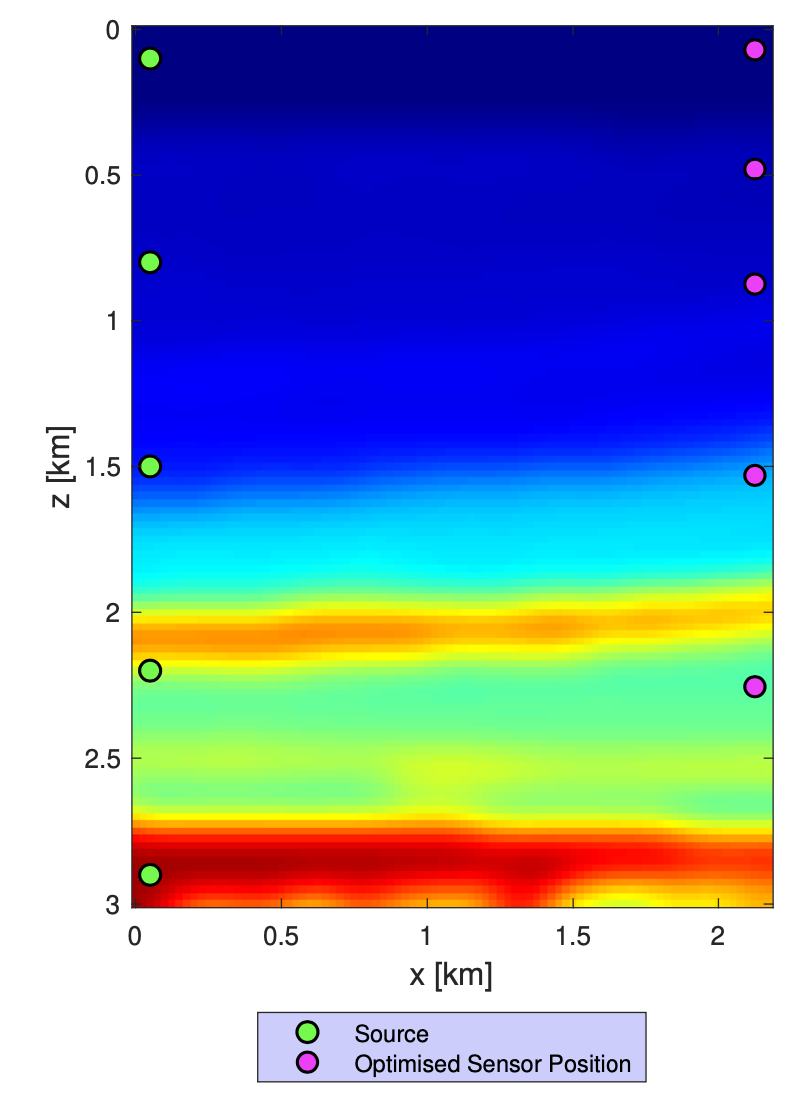}
\caption{
\label{fig:Exp2Opt}}
\end{subfigure} 
\caption{{(a) Source positions and {random}  non-optimised sensor positions; (b) Source positions and optimised sensor positions after $\cP$ optimisation (c) Source positions and optimised sensor positions after $\cP, \alpha$ optimisation.   Slices 1, 2, 3 and 5 were used  as training models, and slice 4
  as testing model.   Illustrations are presented on slice 2 for convenience.
  \label{fig:all5}}}
\end{figure} 

\subsection{Full cross-validation of the results.}

{To show the robustness of the results with respect to the learned design parameters when these are applied to a related (but different) model, we run our bilevel algorithm and  perform a full cross validation.}
{For each $i$ we choose as testing model the $i$th Marmousi slice, and we  train
  on the remaining slices,  repeating the experiment for each $i\in\{1,2,3,4,5\}$.  The results are presented in Table  \ref{tab:fullCV}. } 

\begin{table}[h!] 
  \begin{center}

\begin{tabular}{|| c || c | c || c | c ||c ||} 
 \hline
 Slice  & MRE ($\%$) & SSIM  &
MRE ($\%$) & SSIM & IF \\ [0.5ex] 
 \hline\hline
   & \multicolumn{2}{c||}{Starting parameters} & \multicolumn{2}{c||}{Optimised parameters} &\\\hline
    \multicolumn{6}{|c|}{Training = $\{2,3,4,5\}$, Testing = 1}\\\hline
    \textbf{1} &  \textbf{4.08}  & \textbf{0.82} & \textbf{2.78}  & \textbf{0.88} & \textbf{3.56}\\
    \hline
  2  & 4.45  & 0.76 &2.91&0.83&3.60\\
\hline
  3  &4.73  &0.78&  3.15 & 0.85 &3.95\\
\hline  
  4  &7.38 &0.67&4.36& 0.78 & 8.89\\
\hline
  5  & 7.31  & 0.68 &3.51 & 0.83 & 13.97\\ 
  \hline
  \hline
 \multicolumn{6}{|c|}{Training = $\{1,3,4,5\}$, Testing = 2}\\\hline
 1  & 4.06  & 0.82& 2.95 & 0.87 & 3.28 \\
 \hline
\textbf{2} &   \textbf{4.46}  & \textbf{0.76} & \textbf{3.27}  & \textbf{0.81} & \textbf{2.86}\\
 \hline
  3   &4.73 &0.78 &3.11 &0.86 &4.58\\
 \hline
  4  &7.37 &0.67& 4.91 &0.77 &7.88\\
 \hline
  5  &7.31  &0.68 &3.86 &0.83 &12.59\\ 
  \hline
  \hline
  \multicolumn{6}{|c|}{Training = $\{1,2,4,5\}$, Testing = 3}\\\hline
  1  &4.06 &0.82 &2.41&0.89 &4.68\\
 \hline
  2  &4.45  &0.76 &2.63 &0.84 &3.99\\
 \hline
\textbf{3} &  \textbf{4.75}  & \textbf{0.78} & \textbf{3.18}   & \textbf{0.85} & \textbf{3.56}\\
 \hline
  4  &7.38  &0.67 &3.86 &0.80 &9.50 \\
 \hline
  5  &7.31 &0.68 &3.27 &0.85 &15.17 \\ 
  \hline
  \hline
  \multicolumn{6}{|c|}{Training = $\{1,2,3,5\}$, Testing = 4}\\\hline
  1  &4.06 &0.82 & 2.45 &0.89 &4.92 \\
 \hline
  2  &4.44  &0.76 & 3.03 &0.82 &3.87\\
 \hline
  3  &4.73  & 0.78 &2.45 &0.86 &6.55\\
  \hline
  \textbf{4} &  \textbf{7.37}  & \textbf{0.67} & \textbf{4.92}  & \textbf{0.76} & \textbf{7.56}\\
  \hline
  5  &7.31  &0.68 &3.51 &0.86 & 18.33\\ 
  \hline
  \hline
 \multicolumn{6}{|c|}{Training = $\{1,2,3,4\}$, Testing = 5}\\\hline
 1 &4.06 & 0.82 &2.39 & 0.88 &4.93\\
 \hline
  2  &4.45  &0.76 & 2.92 &0.83 &3.73\\
 \hline
  3  &4.73  &0.78 &2.51& 0.85 &5.56\\
 \hline
  4  &7.38  & 0.67& 4.25& 0.79 &9.52\\ 
 \hline
  \textbf{5} &   \textbf{5.64}  &   \textbf{0.71} &   \textbf{3.55}  &   \textbf{0.85} &   \textbf{5.89}\\
  \hline
\end{tabular}

\end{center}
\caption{{Values of Mean Relative percentage error (MRE), SSIM and Improvement Factor (IF) for  
 full cross-validation of the bilevel learning algorithm, using $\cP, \alpha$ optimisation. Four slices of the smoothed Marmousi model are used for training, leaving one slice for testing. 
  {The test cases}   are highlighted in boldface. 
  The values in the fourth pane of this table coincide with  those reported in the first two and last three   columns of Table \ref{tab:jointOpt}.}} \label{tab:fullCV}
\end{table}

{The  results show that good design parameters for inverting data coming from an unknown model can potentially be obtained by training on a set of related models. Indeed, while the  training and testing models above have differences, they also  share some properties, such as the range of wavespeeds present and the fact that the wavespeed increases, on average, as the depth  increases. The results in  Table \ref{tab:fullCV} show that optimisation leads to a robust improvement in SSIM throughout and obtains Improvement Factors in the range 2.86-7.56 for all choices of training and testing regimes.    }

\section{{Conclusions and outlook}} 
\label{sec:conclusion} 
{In this paper we have  proposed  a new bilevel learning algorithm for computing optimal sensor positions and regularisation weight to be used with Tikhonov-regularised FWI. The numerical experiments  validate the proposed methods in that, on a specific cross-well test problem based on the Marmousi model, they  clearly show the benefit of jointly optimising sensor positions and regularisation weight versus using arbitrary values for these quantities. 

We list a number of important (and interrelated) open questions and extensions which can  be addressed by future research. (1) Assess the potential of the proposed method for 
geophysical problems, where (for example) rough surveys could be used to drive parameter optimisation for inversion from more extensive surveys. (2) Test our algorithm on more extensive collections of large-scale, multi-structural benchmark datasets, such as the recently developed OpenFWI \cite{openFWI}. (3) Apply our algorithm in situations where the lower level optimisation problem is FWI equipped with non-smooth regularisers, such as total (generalised) variation. (4) Extend the present bilevel optimisation algorithm to learn
parametric regularisation functionals to be employed within FWI. (5) Extend the present bilevel optimisation algorithm to also learn the optimal number of sensors,  the optimal number and positions of sources, as well as the optimal number of frequencies and their values.}
    
                         \noindent {\bf Acknowledgements.} \ We thank  Schlumberger Cambridge Research for financially supporting the PhD studentship of Shaunagh Downing within the SAMBa Centre for Doctoral Training at the University of Bath. The original concept for this research was proposed by Evren Yarman,  James Rickett and Kemal Ozdemir (all Schlumberger) and we thank them all for   many useful, insightful  and supportive discussions.
   We also thank Matthias Ehrhardt (Bath),  Romina Gaburro (Limerick), Tristan Van Leeuwen (Amsterdam), and 
                         St\'{e}phane Operto and Hossein Aghamiry (Geoazur, Nice) for helpful comments and discussions.

                         We gratefully acknowledge support from the UK Engineering and Physical
Sciences Research Council Grants EP/S003975/1 (SG, IGG, and
EAS),  EP/R005591/1 (EAS), and EP/T001593/1 (SG). This research made use of the Balena High Performance Computing (HPC) Service at the
University of Bath.

\bibliographystyle{siam}

\bibliography{bib}

\section{{Appendix  -- Details of the numerical implementation}}
\label{app:Details}

{In this appendix we give some details of the numerical implementation of our bilevel learning algorithm for learning optimal sensor positions and weighting parameter in FWI (see Figure \ref{fig:bilevel} for a schematic illustration).}

\subsection{Numerical forward model}
  \label{subsec:num_for} 

  To approximate the action of the forward operator $\mathscr{S}_{\bm,\omega}$ and its adjoint (defined in 
Definition   \ref{def:solnop}) we use a finite element method with quadrature. 
    Specifically, we write the problem \eqref{forward} satisfied by $u\in H^1(\Omega)$ (the Sobolev space of functions on $\Omega$ with one square-integrable  weak derivative) in weak form: 
\begin{align*} 
  a(u,v) : & = \int_\Omega \left(\nabla u \cdot \nabla \overline{v}
  - \omega^2 m u \overline{v}\right)
             - \ri \omega \int_{\partial \Omega} \sqrt{m} u \overline{v}
  \ =\  \int_\Omega f \overline{v} + \int_{\partial \Omega} f_b \overline{v} = : F(v) ,      
 \end{align*}
        {for all} $ v \in H^1(\Omega)$.
        This can be  discretised using the finite element method in  the space of  continuous piecewise polynomials of any degree $\geq 1$. Although high order methods are preferrable   for high-frequency problems,
        the Helmholtz problems which we deal with here are relatively low frequency and so here we employ
        linear finite elements with
        standard hat function basis on a uniform rectangular grid (subdivided into  triangles).
        With $h$ denoting mesh diameter the approximation space is      denoted  $V_h$. 
        The numerical solution $u_h\in V_h $ then satisfies
     $ a(u_h, v_h ) = F(v_h)$,   {for all}  $v_h \in V_h.$
Expressing  $u_h$ in terms of the hat-function basis $\{ \phi_j\} $ yields a  linear system in the form  
 \begin{align*}
  A(\bm, \omega) \mathbf{u} 
   = \mathbf{f} + \mathbf{f}_b,
 \end{align*}
        to be solved for the nodal values $\mathbf{u}$ of $u_h$. Here the matrix $A(\bm, \omega)$ takes the form
        \begin{align*}
         A(\bm, \omega) = S -\omega^2 M(\bm) - \ri \omega B(\bm) ,
         \end{align*}
        with  $S_{i,j}= \int_{\Omega} \nabla \phi_i \cdot \nabla \phi_j$, 
\begin{align}
M(\bm)_{i,j}=  \int_{\Omega} m\, \phi_i \cdot \phi_j, \quad   
B(\bm)_{i,j}= \int_{\partial \Omega} \sqrt{m} \,\phi_i \cdot \phi_j, 
\label{matrices}\end{align}
        \begin{align} f_j = \int_\Omega f \phi_j, \quad \text{and} \quad 
(f_b)_j = \int_{\partial \Omega} f_b \phi_j \label{rhss}\end{align}

To simplify this  further we approximate the integrals in  \eqref{matrices} and \eqref{rhss} by nodal
quadrature,  leading to approximations (again denoted $M$ and $B$) taking  the simpler diagonal form:  
      $$ M(\bm) = \mathrm{diag}\{d_k m_k\}, \quad B(\bm) = \mathrm{diag}\{b_k \sqrt{m_k}\}, $$
where $k = 1, \ldots M$ denotes a labelling of the nodes and $\bd, \bb \in \mathbb{R}^M_+$ are mesh-dependent vectors with   $b_k$ vanishing  at interior nodes. Moreover 
$$(\bff)_k = d_k f(x_k),   \quad  (\bff_b)_k = b_k f(x_k)  $$
are the  vectors of (weighted) nodal values of the functions
$f,f_b$. 
        Analagously, solving with $A(\bm, \omega)^* = S - \omega^2 D(\bm) + \ri \omega B(\bm)$ represents numerically the action of the adjoint solution operator $\mathscr{S}^*_{\bm, \omega}$. 

        All our computations in this paper are done on rectangular domains discretized with uniform rectangular meshes {(each subdivided into two triangles)} , in which case  $S$ corresponds  to the ``five point Laplacian'' arising in lowest order finite difference methods and
        $M,B$ are diagonal matrices,  analogous to  (but not the same as) those proposed in a
        finite difference context in  \cite{TvLcode,VaHe:16}.            

 \noindent {\bf Computing the wavefield.} \   When the source $s$ is a gridpoint, the wavefield  $\bu = \bu(\bm, s, \omega)$ (i.e. the approximation to $u(\bm, \omega,s)$ defined in   \eqref{defu}) is found by solving
 $$ A(\bm, \omega) \bu = \bfe_s, \quad   $$
where $ (\bfe_s)_k = 0 $ for $k \not = s$ and $(\bfe_s)_s = 1 $ (i.e. the standard basis vector centred on node  $s$). When $s$ is not a grid-point we still generate the  vector $\bf$ by inserting $f = \delta_s$ in the first integral in \eqref{rhss} and note that $f_b = 0$ in this case.

        Our implementation is in Matlab and the linear systems
  are factorized using the sparse direct (backslash) operator available there.
        Our code development for the lower  level problem  was
        influenced by \cite{TvLcode}.

             In the numerical implementation of the bilevel algorithm, the wavefields $u(\bm, \omega,s)$ and $u(\bm',\omega,s)$ were  computed on different grids before computing the misfit  \eqref{resdbi}, as is commonly done when avoiding  `inverse crimes'. This is done at both training and testing steps in Section \ref{sec:Numerical}.  We also  tested the bilevel algorithm with and without the addition of artificial noise in the misfit  $\beps$ and it was found that  adding noise made the upper level objective $\psi$ much less smooth. As a result, noise  was not included in the definition of $\phi$
        when the  design parameters were optimised. However,  noise was added to the synthetic data when the optimal design parameters were tested.

        \subsection{Quasi-Newton methods} \label{subsec:Quasi}

        At the lower level, the optimisation   is done using the L-BFGS method (Algorithms 9.1 and 9.2 in \cite{wright1999numerical}) with Wolfe Line Search.  Since the FWI runs for each training model are independent of eachother, we parallelise the lower-level over all training models. 
The upper-level optimisation is performed using a bounded version of the L-BFGS algorithm (namely L-BFGSb), chosen to ensure that the sensors stay within the domain we are considering.
  Our implementation of the variants of BFGS is based on \cite{granzow} and \cite{byrd1995limited}. More details are in  \cite[Section 5.4]{thesis}. 
 
        \subsection{Numerical restriction operator} \label{subsec:restrict} 

        In our implementation, the  restriction operator $\cR(\cP)$ defined in \eqref{Rv} is discretised as an  $N_r \times N$ matrix $R(\cP)$, where $N_r$ is the number of sensors and $N$ is the number of finite element nodes. For any nodal vector $\bu$ the product $R(\cP) \bu$ then contains 
        a vector of  approximations to the quantities $\{u_h(p): \, p \in \cP\}$.  The action of $R(\cP)$ 
        does not simply produce the values of $u_h$ at the points in $\cP$,  because this would not be sufficiently  smooth. In fact experiments with such a definition of $R(\cP)$ yielded generally poor results when used in the bilevel algorithm --  recall that the  upper-level gradient formula \eqref{gradALT1} involves the spatial  derivative of the  restriction operator.

        Instead, to obtain a sufficiently smooth dependence on the candidate sensor positions,  we use a ``sliding  cubic'' approximation defined as follows.  
        First, in one dimension (see Figures \ref{fig:Cub1}, \ref{fig:Cub2}), with  $p$ denoting
        the position of a sensor moving along a line, the value of the interpolant is found by cubic interpolation at the four nearest nodes. So as $p$ moves,  the nodal values used change. 
 In two dimensions, 
        we perform bicubic interpolation using 
the four closest nodes  in each direction.

\begin{figure}[h!]
\centering
\begin{tikzpicture}[scale=2, circ/.style={shape=circle, color=red, line width=0.75mm,, inner sep=5.5pt, draw, node contents=}]

\draw [->,gray] (0,0) -- (7,0) node [below] {\small $x$};

\foreach \n/\x in {0/0.5, 1/1.5, 2/2.5, 3/3.5 4/4.5 5/5.5}
{
   \fill (\x,0) circle (1.6pt) node [below=7pt] {$x_{\n}$};
}
\fill (4.5,0) circle (1.6pt) node [below=7pt] {$x_{4}$};
\fill (5.5,0) circle (1.6pt) node [below=7pt] {$x_{5}$};
\fill (6.5,0) circle (1.6pt) node [below=7pt] {$\hdots$};
\fill[red] (2,0) circle (1.3pt) node [below=7pt] {$p$};
\draw node (c1) at (0.5, 0) [circ];
\draw node (c2) at (1.5, 0) [circ];
\draw node (c3) at (2.5, 0) [circ];
\draw node (c4) at (3.54, 0) [circ];
\end{tikzpicture}
\caption{Points used for cubic interpolant for sensor $p$ in interval $[x_1, x_2]$ \label{fig:Cub1}}
\end{figure}
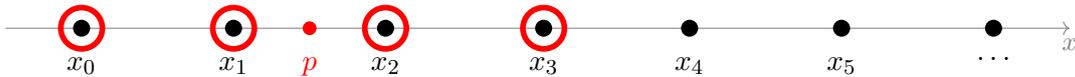

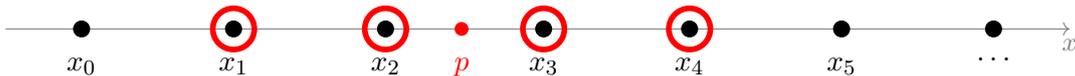
\begin{figure}[h!]
\centering
\begin{tikzpicture}[scale=2, circ/.style={shape=circle, color=red, line width=0.75mm,, inner sep=5.5pt, draw, node contents=}]

\draw [->,gray] (0,0) -- (7,0) node [below] {\small $x$};

\foreach \n/\x in {0/0.5, 1/1.5, 2/2.5, 3/3.5 4/4.5 5/5.5}
{
   \fill (\x,0) circle (1.6pt) node [below=7pt] {$x_{\n}$};
}
\fill (4.5,0) circle (1.6pt) node [below=7pt] {$x_{4}$};
\fill (5.5,0) circle (1.6pt) node [below=7pt] {$x_{5}$};
\fill (6.5,0) circle (1.6pt) node [below=7pt] {$\hdots$};
\fill[red] (3,0) circle (1.3pt) node [below=7pt] {$p$};
\draw node (c1) at (4.5, 0) [circ];
\draw node (c2) at (1.5, 0) [circ];
\draw node (c3) at (2.5, 0) [circ];
\draw node (c4) at (3.54, 0) [circ];
\end{tikzpicture}
\caption{Points used for cubic interpolant for sensor $p$ in interval $[x_2, x_3]$\label{fig:Cub2}}
\end{figure}
  
\subsection{Bilevel Frequency Continuation}
\label{sect:FC}
It is well-known that (in frequency domain FWI), the  objective function $\phi$ is less oscillatory  for lower frequencies than higher (see, e.g., Figure \ref{fig:4}), but that a range of frequencies are required to reconstruct a range of different sized features in the image. 
  Hence  frequency continuation (i.e.,  optimising for lower frequencies first to obtain a
  good starting guess for higher frequencies)  
  is a standard technique for helping to  avoid spurious  local minima  (see, e.g., \cite{sirgue2004efficient}).
  Here we present a   bilevel version of the frequency-continuation approach that uses interacting frequency continuation on both the upper and lower levels, thus reducing  
  the risk of converging to spurious stationary points at either  level.  
  
  As motivation we consider  the following simplified  illustration. Figure \ref{fig:FCEx}
  shows the training model used  (i.e. $N_{m'}=1$), with three sources (given by the green dots) and three sensors (given by the red dots).   Here  $L =  1.25$ km, and $m = 1/c^2$ with maximum wavespeed $c$ varying between
  $2$ km/s and $2.1$ km/s.  The sensors are   constrained on a vertical line
and are placed   symmetrically   about the centre point. Here  there is only one  
  optimisation variable -- the distance $\Delta\in [0,L]$ depicted in Figure  \ref{fig:FCEx}.
For each of 1251 equally spaced values of   $\Delta \in [0,L]$,  
  (giving 1251 configurations of sensors) 
we   compute the value of  the upper level objective function   $\psi$ in \eqref{SOobj} and plot it in Figure \ref{fig:FCPsiHighLow}. This is repeated for two values of $\omega = \pi$ (blue dashed line) and  $\omega = 11 \pi$ (continuous red line). 
  We see that for $\omega = \pi$ the one local minimum is also the global minimum, but for
  $\omega = 11 \pi$ there are several local minima, but the global minimum is close to the global minimum of $\omega = \pi$. This illustration  shows the potential for  bilevel frequency continuation.    

  \begin{center}
  \begin{figure}[h!]
\begin{subfigure}[t]{0.38\textwidth}
\hspace*{0.5cm}
\includegraphics[scale=0.56]{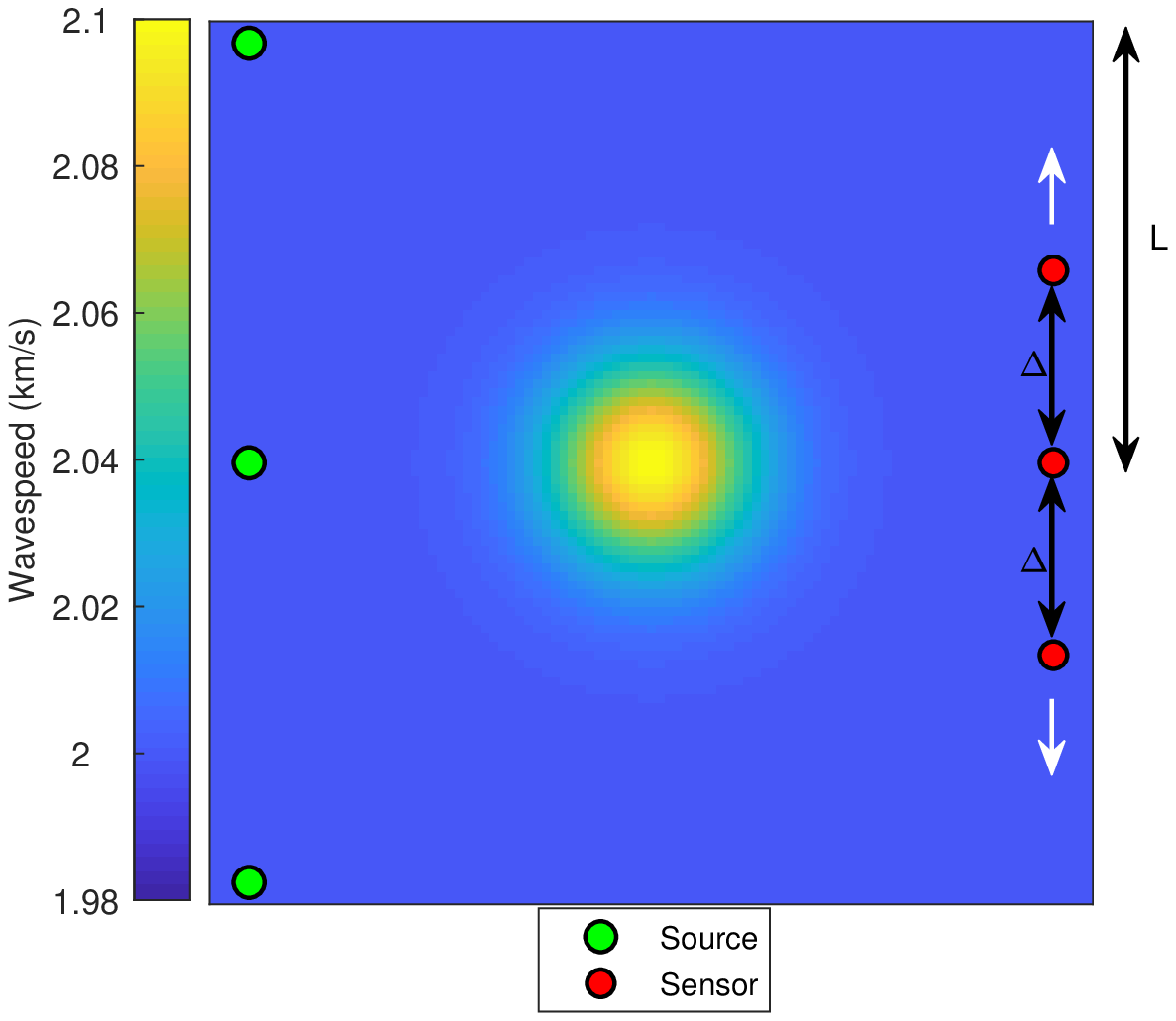}
\caption[Setup for $\psi$ plotting]{Setup for  plots of $\psi$.  \label{fig:FCEx}} %
\end{subfigure}
\hspace{1.9cm}
\begin{subfigure}[t]{0.28\textwidth}
\includegraphics[scale=0.56]{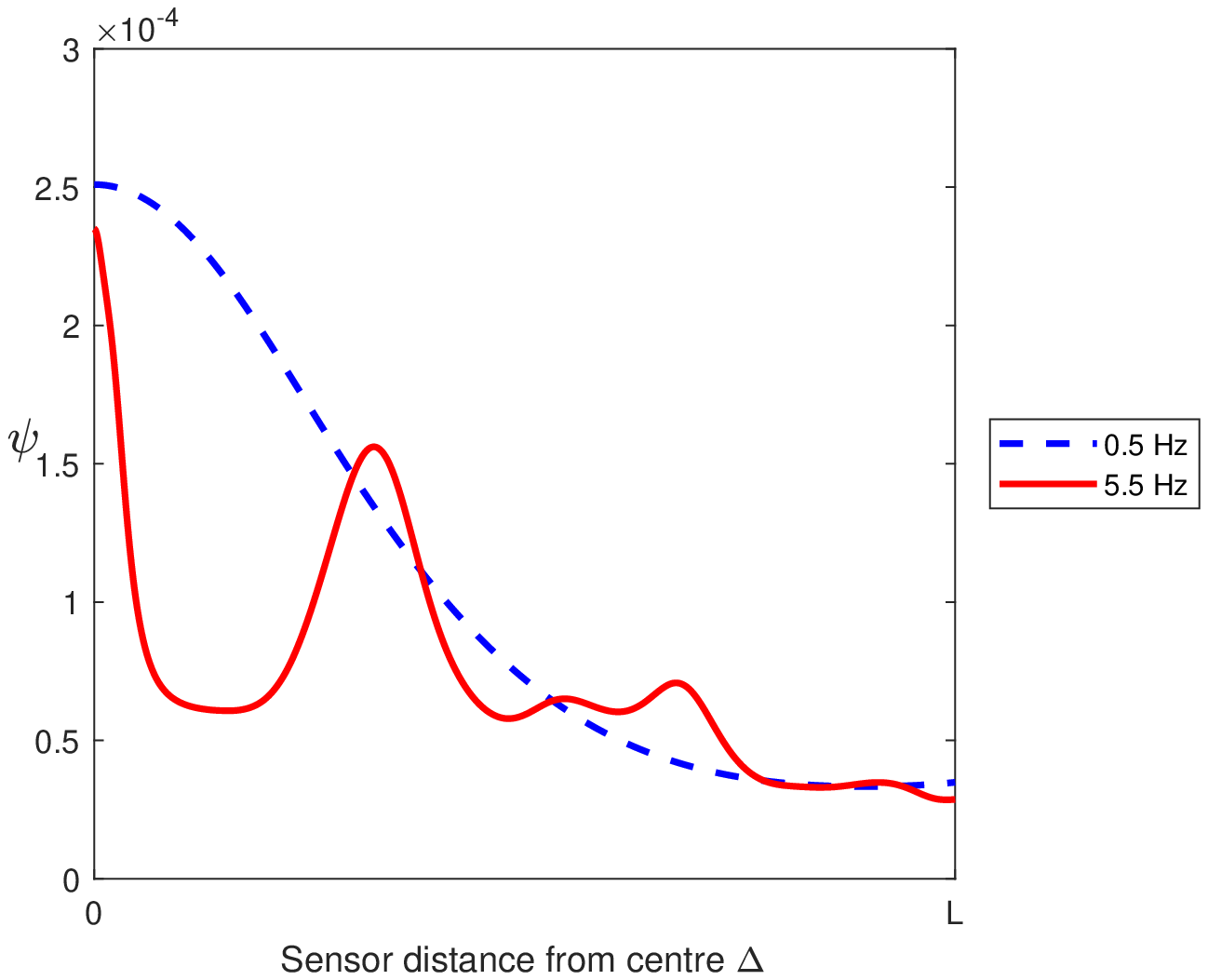}
\caption{Plots of  $\psi$. \label{fig:FCPsiHighLow}}
\end{subfigure}
\caption{\label{fig:4}}
\end{figure}
\end{center}
  
\vspace{-1.1cm} 
  These observations suggest  the following algorithm in which we arrange
  frequencies into groups of increasing size and solve  the  bilevel problem for each group using the solution obtained for the previous group.
  We summarise this in  Algorithm \ref{allg:fcb1} which, for simplicity, is presented
  for one training model only; {the notation `Bilevel Optimisation Algorithm($g_k$)'
  means solving the bilevel optimisation problem sketched in Figure \ref{fig:bilevel} on the $k$th frequency group $g_k$.} 

\smallskip 
\begin{algorithm}[h!]
\setstretch{1.15}
\caption{Bilevel Frequency Continuation} 
\begin{algorithmic}[1]
\State \textit{Inputs:}  $\cP_0$, $\bm_0$, frequencies $\{\omega_1< \omega_2<...<\omega_{N{\omega}}\} \in \mathcal{W}$,  $\bm'$
\State  Group frequencies into $N_f$ groups $\{g_1$, $g_2$,$\ldots$, $g_{N_f}\}$ 
\State \textbf{for} $k=1$ \textbf{to} $N_f$ \textbf{do}
\State \indent $[\cP_{\min}, \bm^{\rm FWI}] \gets$ Bilevel Optimisation Algorithm($g_k$)
\State \indent $\cP_0 \gets \cP_{\min}$
\State \indent $\bm_0 \gets \bm^{\rm FWI}$
\State \textbf{end for}
\State \textit{Output:} $\cP_{\min}$
\end{algorithmic}
\label{allg:fcb1}
\end{algorithm}

\smallskip 
\begin{remark}
  Algorithm \ref{allg:fcb1} is written for the  optimisation of  sensor positions $\cP$ only.
  The experiments in \cite[Section 5.1]{thesis} indicate that the objective function $\psi$  does not become  more oscillatory with respect to
    $\alpha$ for higher frequencies,
  and so the bilevel frequency-continuation approach is not required for optimising $\alpha$. Thus we recommend the user to begin optimising $\alpha$ alongside $\cP$ only in the final frequency group, starting with a reasonable initial guess for $\alpha$, to keep iteration numbers low. If one does not have a reasonable starting guess for $\alpha$, it may be beneficial to begin optimising $\alpha$ straight away in the first frequency group. 
\end{remark}


We illustrate the performance  of Algorithm \ref{allg:fcb1} in  Figure \ref{fig:FCsteps}.
  Each row of  Figure \ref{fig:FCsteps} shows a plot of the upper-level objective function $\psi$ for the problem setup in Figure \ref{fig:FCEx}, starting at a low frequency on row one, and increasing to progressively higher frequencies/frequency groups. In Subfigure (a) we represent a typical starting guess for the parameter to be optimised, $\Delta$, by an open red circle. Here  $\psi$  has one minimum, and the optimisation method finds it  straightforwardly -- see the full red circle in Subfigure (b).
We then progress through  higher frequency groups, using  the solution at the previous step as a starting guess for the next, allowing eventually convergence to the global minimum of the highest frequency group and avoiding the spurious local minima. 
  \begin{figure}
  \centering
  \begin{adjustbox}{minipage=\textwidth,scale=0.80}
\begin{subfigure}[b]{0.48\textwidth}
\hspace*{-1.1cm}
 \tikz[remember picture]\node[inner sep=0pt,outer sep=0pt] (a){\includegraphics[scale=0.55]{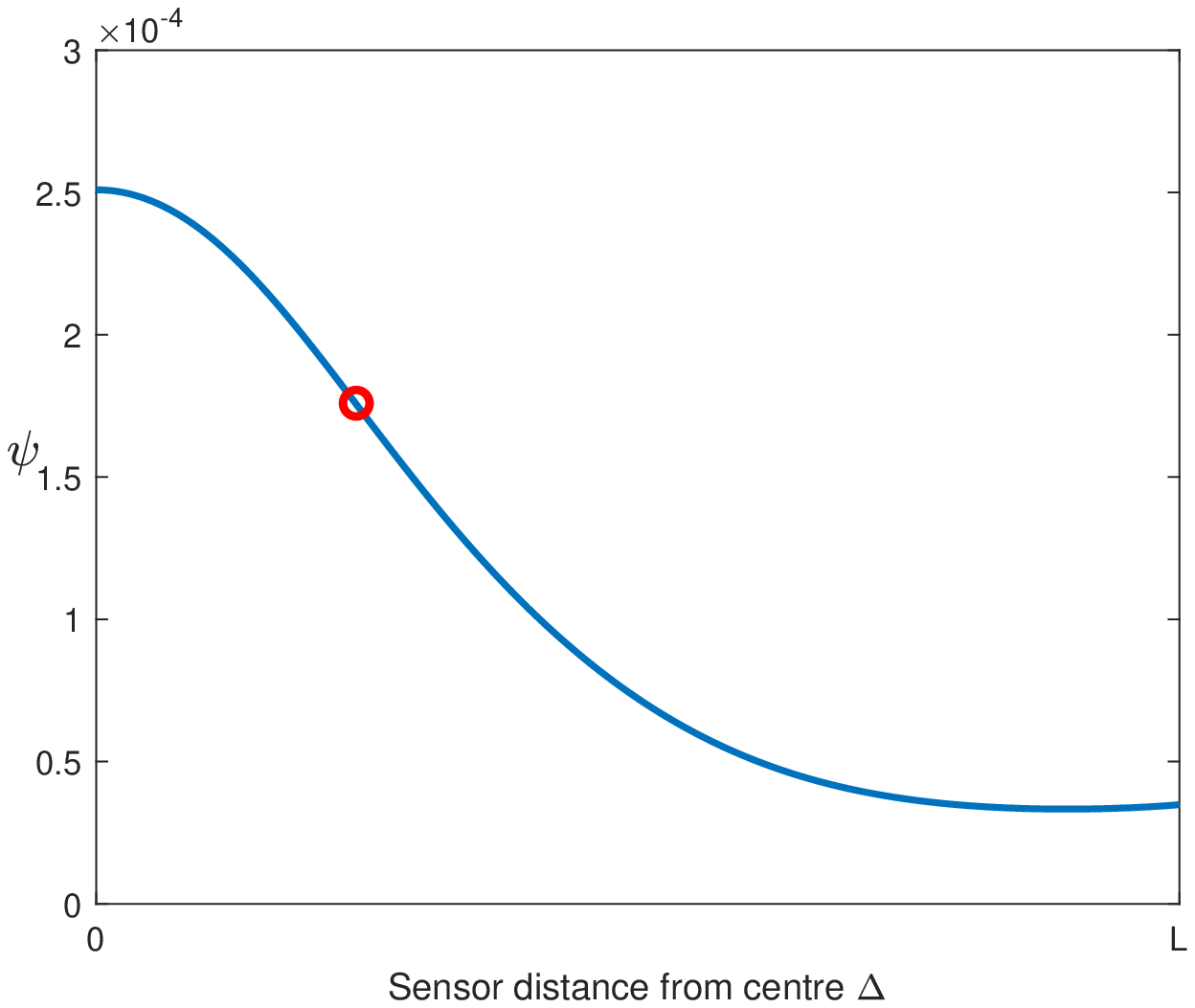}};
\caption{ 0.5 Hz}
\end{subfigure}
\hspace{1.1cm}
\begin{subfigure}[b]{0.48\textwidth}
\tikz[remember picture]\node[inner sep=0pt,outer sep=0pt] (b){\includegraphics[scale=0.55]{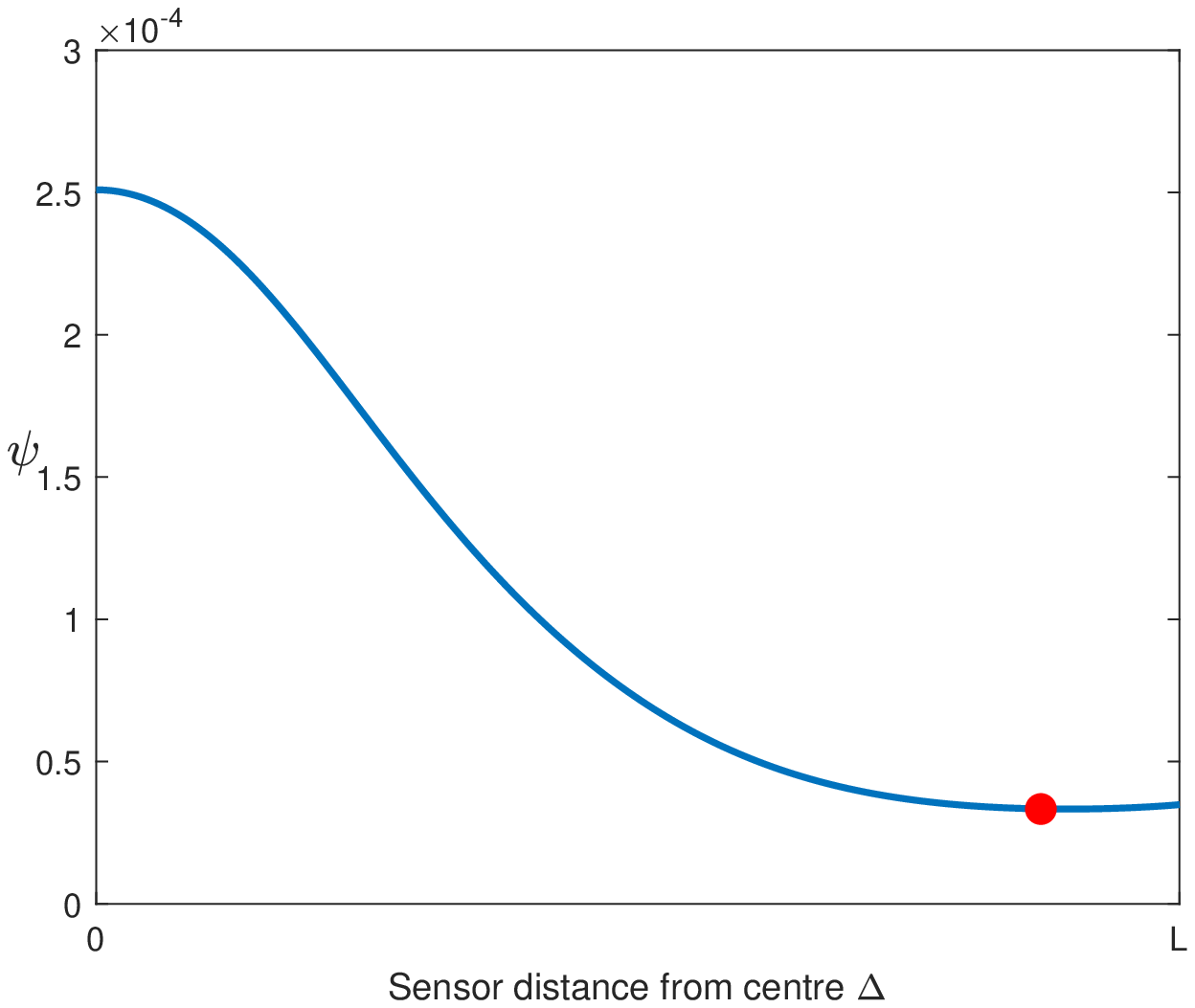}};
\caption{0.5 Hz }
\end{subfigure}\\
\begin{subfigure}[b]{0.48\textwidth}
\hspace*{-1.1cm}
 \tikz[remember picture]\node[inner sep=0pt,outer sep=0pt] (c){\includegraphics[scale=0.55]{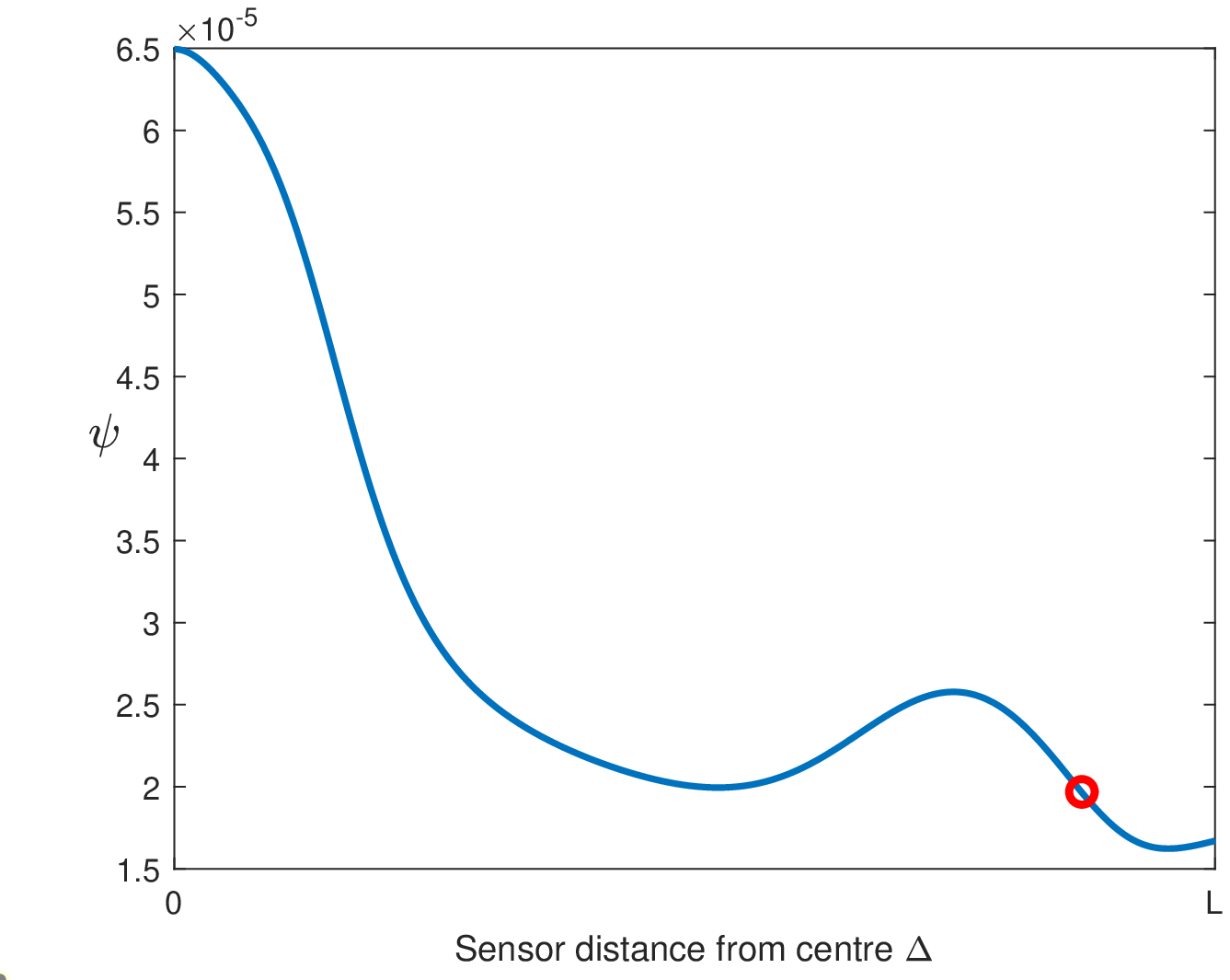}};
\caption{1 and 2 Hz group}
\end{subfigure}
\hspace{1.1cm}
\begin{subfigure}[b]{0.48\textwidth}
\tikz[remember picture]\node[inner sep=0pt,outer sep=0pt] (d){\includegraphics[scale=0.55]{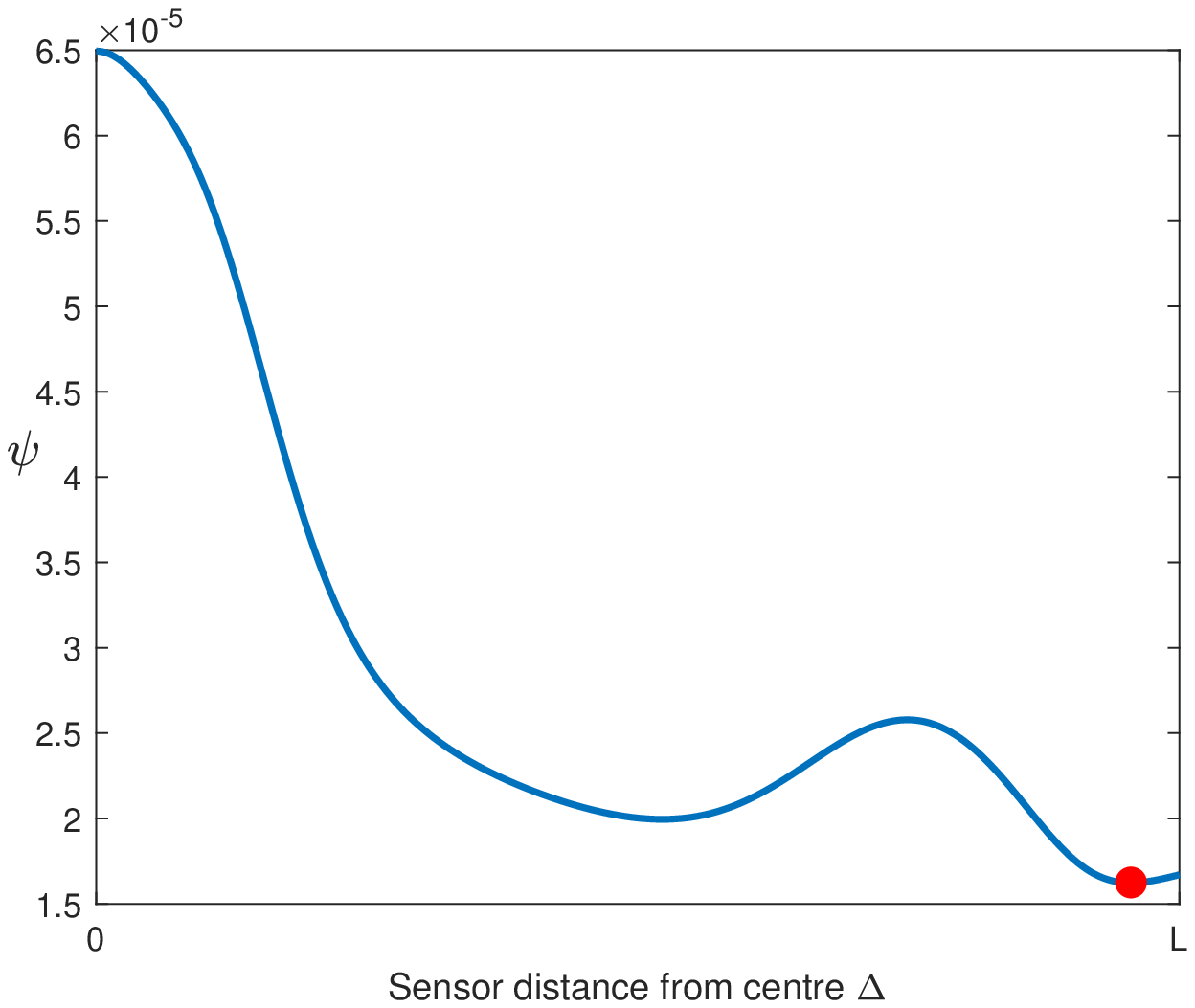}};
\caption{1 and 2 Hz group}
\end{subfigure}\\
\begin{subfigure}[b]{0.48\textwidth}
\hspace*{-1.1cm}
 \tikz[remember picture]\node[inner sep=0pt,outer sep=0pt] (e){\includegraphics[scale=0.55]{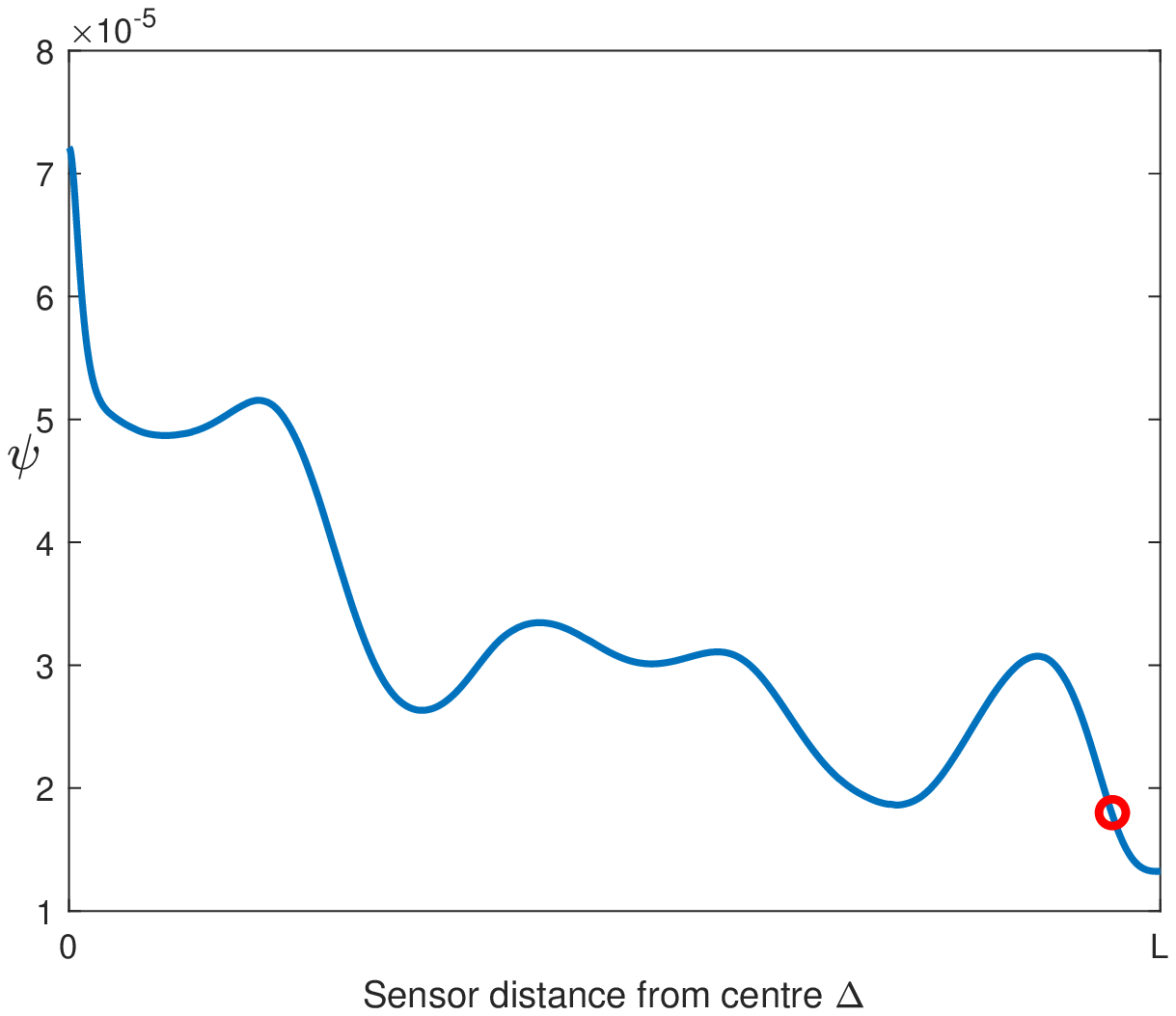}};
\caption{4 and 5 Hz group }
\end{subfigure}
\hspace{1.1cm}
\begin{subfigure}[b]{0.48\textwidth}
\tikz[remember picture]\node[inner sep=0pt,outer sep=0pt] (f){\includegraphics[scale=0.55]{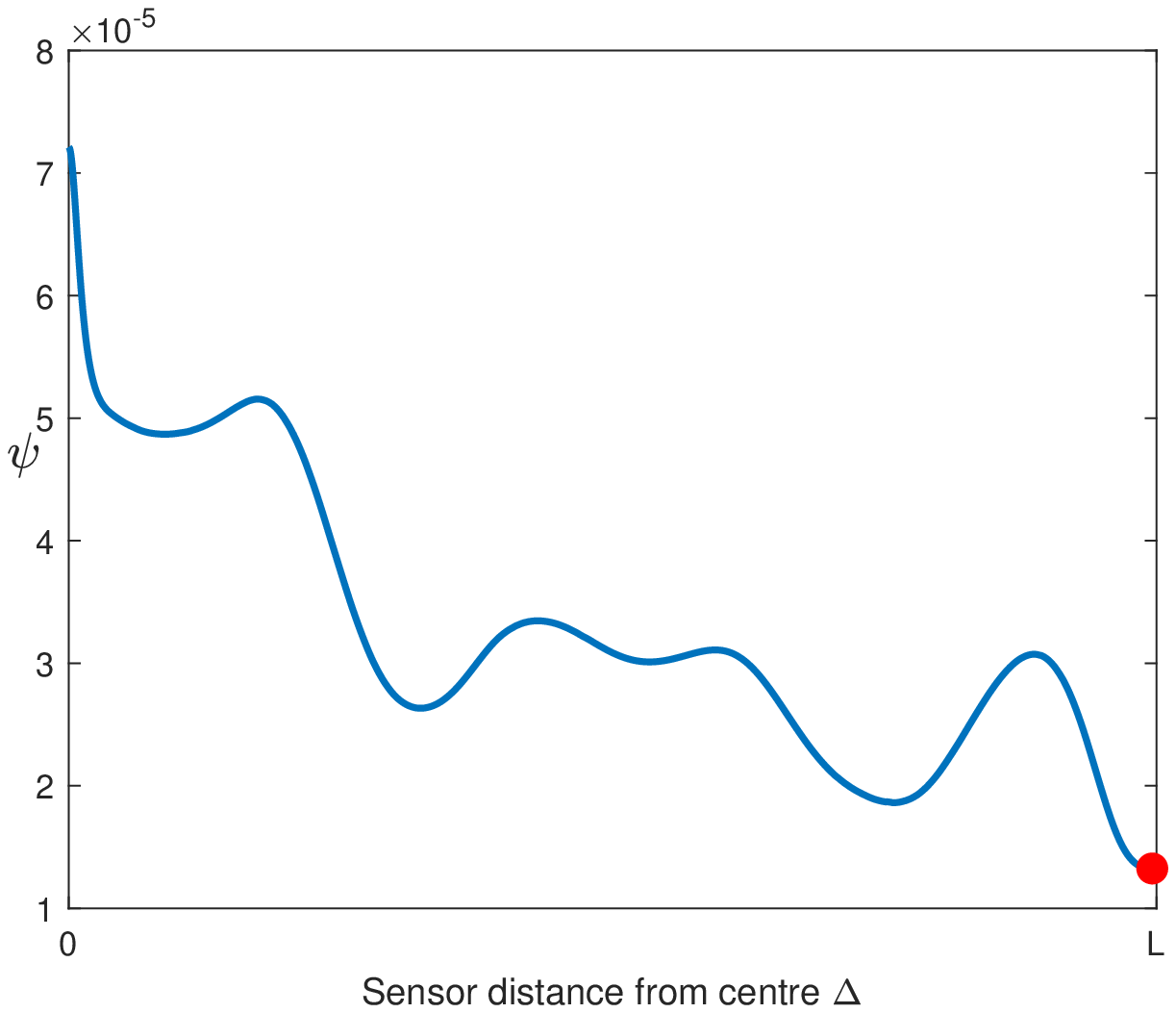}};
\caption{4 and 5 Hz group }
\end{subfigure}
\tikz[remember picture,overlay]\draw[line width=1pt,-stealth,black] ([xshift=1mm]a.east) -- ([xshift=-1mm]b.west)node[midway,above,text=black,font=\LARGE\bfseries\sffamily] {};
\tikz[remember picture,overlay]\draw[line width=1pt,-stealth,black] ([xshift=1mm]c.east) -- ([xshift=-1mm]d.west)node[midway,above,text=black,font=\LARGE\bfseries\sffamily] {};
\tikz[remember picture,overlay]\draw[line width=1pt,-stealth,black] ([xshift=1mm]e.east) -- ([xshift=-1mm]f.west)node[midway,above,text=black,font=\LARGE\bfseries\sffamily] {};
\tikz[remember picture,overlay]\draw[line width=1pt,-stealth,black] ([xshift=1mm]b.south west) -- ([xshift=-1mm]c.north east)node[midway,above,text=black,font=\LARGE\bfseries\sffamily] {};
\tikz[remember picture,overlay]\draw[line width=1pt,-stealth,black] ([xshift=1mm]d.south west) -- ([xshift=-1mm]e.north east)node[midway,above,text=black,font=\LARGE\bfseries\sffamily] {};
\caption[Illustration of the bilevel frequency continuation approach.]{Plots of the upper-level objective function in an illustration of the bilevel frequency continuation approach. The symbol \tikz{ \draw[red](0,0) circle (0.7ex);} denotes a starting guess, and the symbol \tikz{ \draw[red, fill=red](0,0) circle (0.7ex);} denotes a minimum.\label{fig:FCsteps}}
 \end{adjustbox}
\end{figure}

\subsection{Preconditioning the Hessian }
\label{sect:PCG}

 While the solutions of Hessian systems are not required in the
quasi-Newton method for the lower-level problem, such solutions are required  to compute the gradient of
  the upper level objective function (see \eqref{rho}).   We solve these systems using a preconditioned conjugate gradient iteration, without explicilty forming the Hessian.
  As  explained in Section \ref{sec:appendix}, 
  ``adjoint-state'' type arguments   can be applied  to efficiently compute  matrix-vector multiplications with $H$; see also \cite[Section 3.2]{metivier2017full}. 
  In this section we discuss preconditioning techniques for the system \eqref{rho}.
  Our proposed preconditioners are:
\begin{itemize}
\item \textbf{Preconditioner 1:} $$P_1^{-1}, \quad \text{where} \quad P_1 =H(\bm^{\rm FWI}(\cP_0,\alpha_0, \bm'), \cP_0, \alpha_0),$$
  i.e., the full Hesssian at some chosen design parameters $\cP_0$ and $\alpha_0$.

  During the bilevel algorithm the design parameters $\cP, \alpha$ (and hence $\bm^{\rm FWI}(\cP, \alpha, \bm')$)  may  move away from the initial choice $\cP_0, \alpha_0$ and $\bm^{\rm FWI}(\cP_0,\alpha_0, \bm')$ 
  and  the preconditioner may need to be recomputed
 using updated $\cP, \alpha$ to ensure its effectiveness.  
 Here we  recompute the preconditioner at the beginning of each new  frequency group.
The cost of computing this preconditioner, is relatively high -- costing   $M$ Helmholtz  solves, for each source, frequency, and training model.   
 \item \textbf{Preconditioner 2:}
\begin{equation}\label{P2}
P_2^{-1}, \quad \text{where} \quad P_2=\Gamma(\alpha_0, \mu).
\end{equation}
 This is cheap to compute as no PDE solves are required. In addition, this preconditioner is independent of the sensor positions $\cP$, training models $\bm'$, FWI reconstructions $\bm^{\rm FWI}$  and frequency. When optimising sensor positions alone,  the preconditioner therefore only needs to be computed once at the beginning of the bilevel algorithm. Even
 when optimising $\alpha$, this preconditioner does not need to be recomputed since it turns out to remain
 effective even when $\alpha$ is no longer near its initial guess. 
\end{itemize}
  
  Although we write the preconditioners above as the inverses of certain matrices, these are not
  computed in practice, rather the Cholesky factorisation of the relevant matrix is computed and used  to compute the action of the inverse.  %


  To test the preconditioners we consider the solution of \eqref{rho} in the following situation.   We take a training model  and configuration of sources and sensors as  shown in Figure \ref{fig:preconGT} and compute $\bm^{\rm FWI}$,  using synthetic data, avoiding  an inverse crime by computing the data and solution using different grids. 
  
\begin{figure}[h!]
\hspace*{2.5cm}
\includegraphics[scale=0.72]{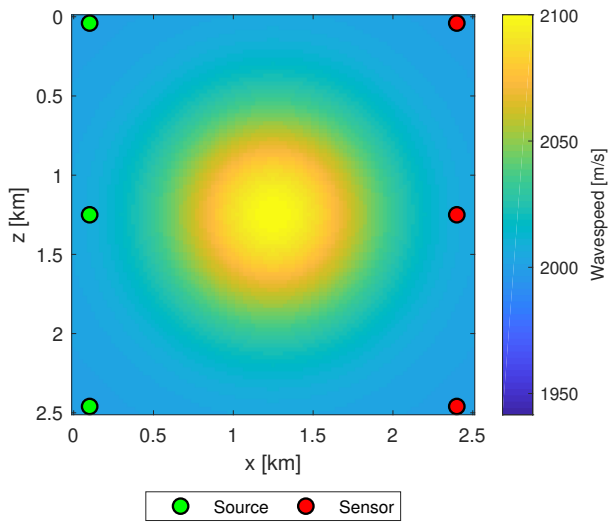}
\vspace{-0.75cm}
\caption{Ground truth model and acquisition setup used for preconditioining experiments. \label{fig:preconGT}}
\end{figure}

    We consider the CG/PCG method to have converged if  
$\|\textbf{r}_n\|_2/\|\textbf{r}_0\|_2\leq 10^{-6}$, where $\textbf{r}_n$ denotes the residual at the $n$th iteration and $\textbf{r}_0$ denotes the initial residual.

  \par The aim of this experiment is to demonstrate the reduction in 
the number of iterations for PCG to converge, compared to the number of CG iterations (denoted $N_i$ here). 
We denote the number of iterations taken using Preconditioner 1 as $N_i^{P_1}$ and the number of iterations taken using Preconditioner 2 as $N_i^{P_2}$. As we have explained, the preconditioner $P_1$ depends on the sensor positions. Therefore we test two versions of $P_1$ -- one where the sensor positions $\cP_0$ are close to the current sensor positions $\cP$ (i.e. close to those shown in Figure \ref{fig:preconGT}) and one where the sensor positions $\cP_0$ are far from $\cP$. We denote these preconditioners as $P_{1_{near}}$ and $P_{1_{far}}$ 
 and their iterations counts as $N_i^{{P_1}_{near}}$ and $N_i^{{P_1}_{far}}$, respectively.  We display these `near' and `far' sensor setups in Figure \ref{fig:precongoodbad} (a) and (b), respectively. 
\begin{figure}[h!]
\hspace*{0.25cm}
\begin{subfigure}[t]{0.48\textwidth}
\hspace{0.2cm}
\includegraphics[scale=0.48]{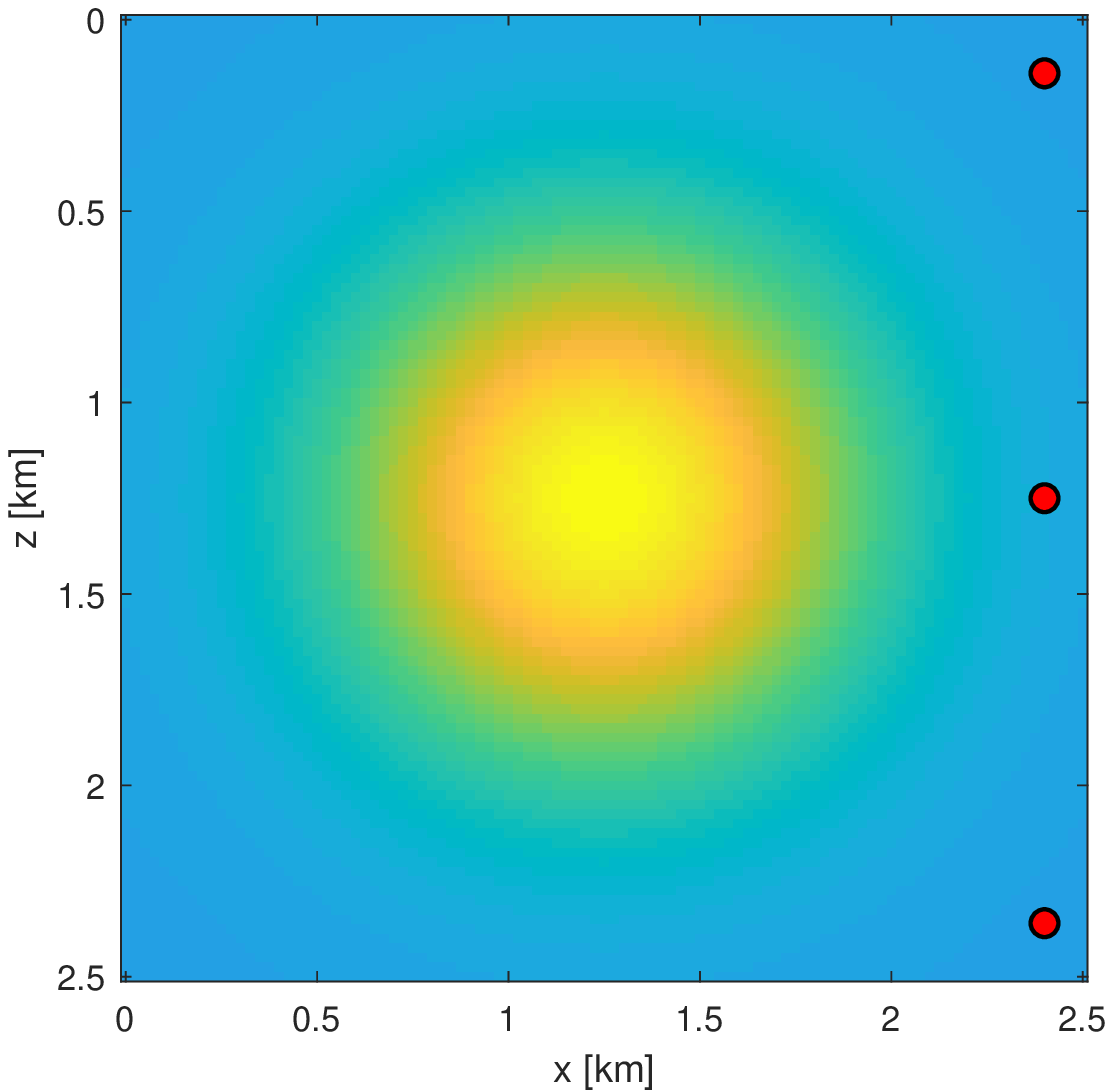}
\caption{Sensor positions `near' $\bp$ }
\end{subfigure}
\hspace{-0.9cm}
\begin{subfigure}[t]{0.48\textwidth}
\hspace*{0.15cm}
\includegraphics[scale=0.48]{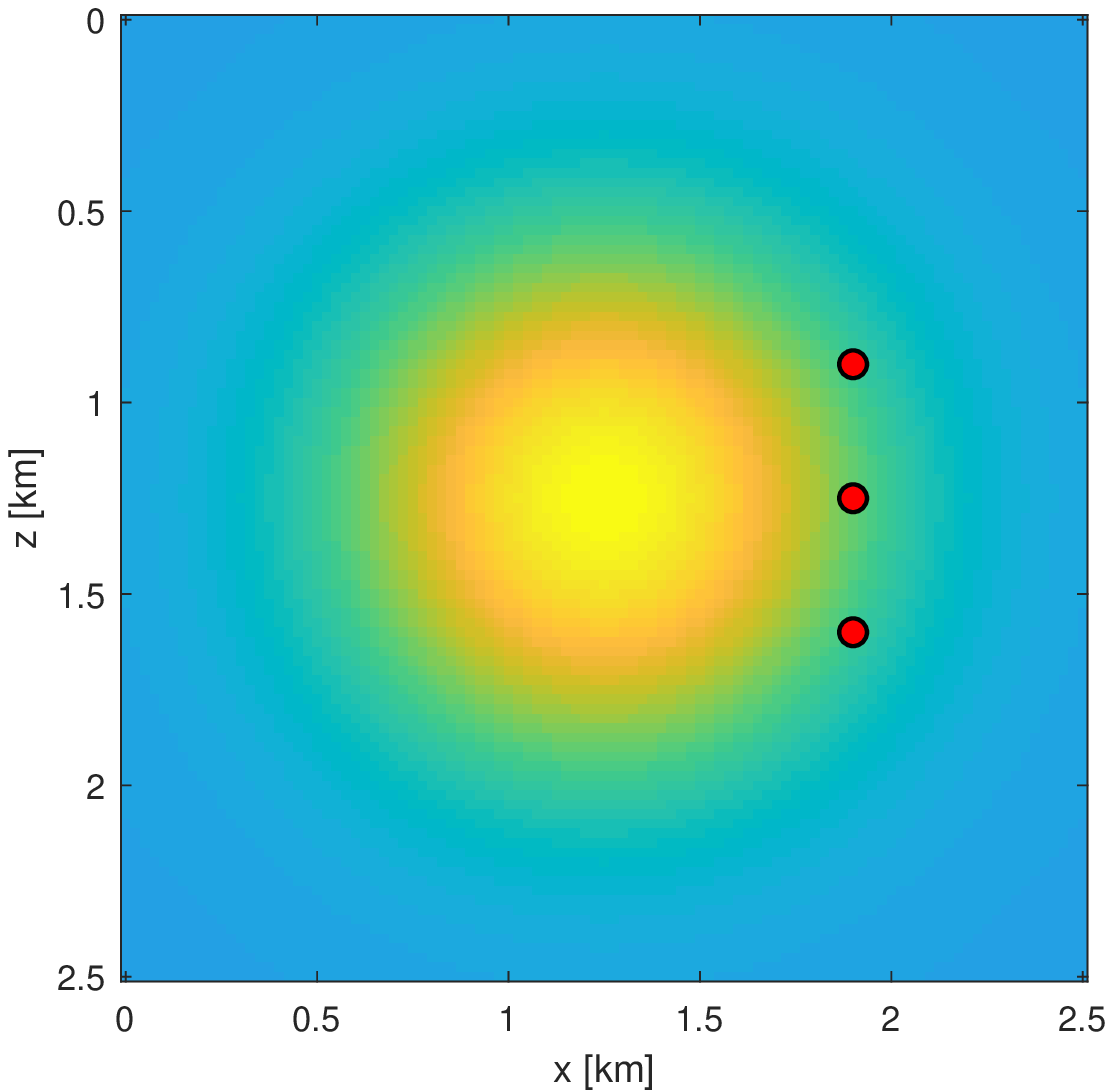}
\caption{Sensor positions `far' from $\bp$   }
\end{subfigure}
\caption{Sensor positions used to compute the preconditioner $P_1$. This preconditioner is used to solve \eqref{rho} for the problem shown in Figure \ref{fig:preconGT}. \label{fig:precongoodbad}}
\end{figure}
\par We vary the regularisation  parameter $\alpha$ and record the resulting number of CG/PCG iterations taken to solve \eqref{rho}. 
Table \ref{tab:alphaP} shows the number of iterations needed to solve \eqref{rho} when using the PCG method, as well as the percentage reduction in iterations (computed as the reduction in iterations divided by the original non-preconditioned number of iterations, expressed as a percentage and rounded to the nearest whole number). We see that preconditioner $P_1$ is very effective at reducing the number of iterations when the sensors $\cP_0$ are close to $\cP$. The number of iterations are reduced by between 85-96$\%$. When $\cP_0$ is not close to $\cP$ however, $P_1$ is not as effective. In this case, the PCG method is even worse than the CG method when $\alpha$ is small, but improves as $\alpha$ is increased, reaching approximately a 89$\%$ reduction in number of iterations at best. This motivates the update in this preconditioner as the sensors move far from their initial positions. The preconditioner $P_2$ produces a more consistent reduction in the number of iterations, ranging from 
71-91$\%$. 
\begin{table}[h!]
\begin{center}
\setstretch{1.15}
\begin{tabular}{|| c |c|| c| c || c | c || c |c ||} 
 \hline  
 $\alpha$ & $N_i$ & $N_i^{{P_1}_{near}}$& $\%$ &$N_i^{{P_1}_{far}}$& $\%$ & $N_i^{P_2}$& $\%$ \\ [0.5ex] 
 \hline\hline
 0.5 & 153  & 21 &  86 $\%$ & 181 & -18 $\%$  &36   & 76  $\%$ \\ 
 \hline
 1 & 132 & 17 &  87 $\%$  &  136 & - 3 $\%$ & 35 &  73 $\%$\\
 \hline
 5 & 127 & 11  &  91 $\%$ & 62 &  51 $\%$ & 29 &   77 $\%$\\
 \hline
 10 & 137 & 9 &  93 $\%$ & 46 &   66 $\%$ & 26 &  81 $\%$\\
 \hline
 20 & 143 & 8 &  94 $\%$  &32 &  78 $\%$  &21 &  85 $\%$\\ 
  \hline
 50 & 158  & 7 &   96 $\%$ &  24 &  85 $\%$ & 17&   89 $\%$\\ 
  \hline
 100 &162  & 6 &  96 $\%$ & 16 &    90 $\%$  &14 &  91 $\%$\\ [1ex] 
 \hline
\end{tabular}
\end{center} 
\caption{Effect of varying Tikhonov regularisation weight $\alpha$ on solving \eqref{rho} using PCG, using two versions of the preconditioner $P_1$ and the preconditioner $P_2$. The convex parameter is constant at $\mu=10^{-8}$. \label{tab:alphaP}}
\end{table}
\par These iteration counts must then be considered in the context of the overall cost of solving the bilevel problem in \cite[Section 5.2.2.1]{thesis}. In this cost analysis we show that, in general, $P_2$ is a more cost effective preconditioner than $P_1$ when $M$ is large.

\subsection{Parallelisation}
  \label{sect:par}
 Examination of Algorithm  \ref{alg:box} reveals that  its parallelisation   over training models is straightforward.   For each  $\bm' \in \mathcal{M}'$, the lower-level solutions $\bm^{\rm FWI}(\bp, \bm')$
  can be computed  independently. 
  Then, from the loop beginning at Step 2 of Algorithm \ref{alg:box}, the main work in computing the gradient of $\psi$ is also independent of $\bm'$, 
  with only the finally assembly of the gradient (by \eqref{sograda} \eqref{gradALT}) having  to take place outside of this parallelisation.  The algorithm was parallelised using the \verb+parfor+ function in Matlab. \cite[Section 5.3]{thesis} demonstrated, using strong and weak scaling, that the problem scaled well using up to $N_{m'}$ processes.

\section{Appendix  --  Computations with the gradient and Hessian of $\phi$}\label{sec:appendix}

  \subsection{Gradient of  $\phi$}
  \label{subsec:gradphi}  Recall the formula for the   gradient $\nabla \phi$ given in \eqref{graddirect_here}. 
 Since we use a variant of the BFGS quasi-Newton algorithm to minimise $\phi$, the cost of this algorithm is dominated by the computation of  $\phi$ and $\nabla \phi$. 
  While efficient methods for computing   these two quantities (using an `adjoint-state' argument) are known, we state them again briefly here  
                         since (i) we do not know references where this procedure is written down at the PDE
                         (non-discrete) level  and (ii) the development  motivates our  approach for computing $\nabla \psi$ given in Section \ref{sect:gradient}.
  A review of the adjoint-state method, and an alternative derivation of the gradient from a Lagrangian persepctive, are provided in \cite{plessix2006review}, see also \cite{metivier2017full}.
  
\begin{theorem}[\textbf{Formula for $\nabla \phi$}] \label{th:grad} \ 
  For  models $\bm, \bm'$, sensor positions  $\cP$, regularisation
  parameter $\alpha$ and  each $k = 1, \ldots , M$, 
       \begin{align}   
       \frac{\partial \phi }{\partial m_k}(\bm,\cP, \alpha, \bm')  
         \ & = \
         -  \Re \sum_{s \in \mathcal{S}} \sum_{\omega \in \mathcal{W}} \bigg( \cG_{\bm, \omega} (\beta_k u(\bm, \omega,s) ),
          \lambda(\bm, \cP, \omega, s, \bm')   \bigg)_ {\Omega \times \partial \Omega}  \noindent + \Gamma(\alpha,\mu) \bm , 
\label{I4}
     \end{align}
       where, for each  $\omega \in \mathcal{W}$  and $s\in \cS$, $\lambda $ is the adjoint solution:    
       \begin{align} 
        \lambda(\bm,\cP, \omega,s, \bm') =       \mathscr{S}_{\bm, \omega}^* \left( \begin{array}{l} \mathcal{R}(\cP)^*\beps(\bm, \cP, \omega,s, \bm')\\0 \end{array} \right).  
\label{lambda}       
       \end{align}          
 \end{theorem}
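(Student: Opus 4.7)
The plan is to start from Proposition \ref{prop:elem}, which already provides the expression
\[ \frac{\partial \phi}{\partial m_k}(\bm,\cP,\alpha,\bm') = -\Re \sum_{s,\omega} \left\langle \mathcal{R}(\cP)\frac{\partial u}{\partial m_k}(\bm,\omega,s), \beps(\bm,\cP,\omega,s,\bm') \right\rangle + \Gamma(\alpha,\mu)\bm, \]
together with the identity \eqref{oneMgrad2a} for $\partial u/\partial m_k$. The entire content of Theorem \ref{th:grad} is then to rewrite the inner product on the right-hand side in the adjoint-state form, so the naive cost of $M$ Helmholtz solves (one per $m_k$) becomes a single adjoint solve per $(s,\omega)$.

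First I would apply the duality identity \eqref{I1} to move $\mathcal{R}(\cP)$ onto the second argument, giving
\[ \left\langle \mathcal{R}(\cP)\frac{\partial u}{\partial m_k}, \beps \right\rangle = \left( \frac{\partial u}{\partial m_k},\, \mathcal{R}(\cP)^* \beps \right)_{\Omega}. \]
Next I would embed this scalar $\Omega$-inner product into the product inner product on $\Omega\times\partial\Omega$. Since the boundary component of the right factor will be set to zero, no information is lost, and the left factor can simultaneously be replaced by the full pair $(\partial u/\partial m_k,\, \partial u/\partial m_k\vert_{\partial\Omega})^\top$.

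Then I would substitute \eqref{oneMgrad2a} to express that pair as $\mathscr{S}_{\bm,\omega}\cG_{\bm,\omega}(\beta_k u, \beta_k u\vert_{\partial\Omega})^\top$, and invoke the Green identity \eqref{Green} to transfer $\mathscr{S}_{\bm,\omega}$ onto the second argument as $\mathscr{S}^*_{\bm,\omega}$. Recognising the resulting quantity as exactly $\lambda(\bm,\cP,\omega,s,\bm')$ by its definition \eqref{lambda} yields \eqref{I4} after restoring the sums over $s$ and $\omega$ and the regularisation term.

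There is no real obstacle here: the proof is a chain of algebraic identities already established earlier in the paper. The only subtlety worth a remark is the legitimacy of embedding into the product inner product — specifically, that $\mathcal{R}(\cP)^*\beps$ (a distribution supported on finitely many points in $\Omega$) can be paired with $\partial u/\partial m_k$ through $(\cdot,\cdot)_\Omega$. This is consistent with the extended pairing used in Definition \ref{def:part_delta} and with Remark \ref{rem:firstarg}(ii), provided one interprets the inner products in the appropriate duality sense; the same remark applies when using \eqref{Green} with the singular right-hand side $(\mathcal{R}(\cP)^*\beps, 0)^\top$ in the definition of $\lambda$. Once one accepts this (as is standard in the adjoint-state literature), the identity follows line-by-line from \eqref{I1}, \eqref{oneMgrad2a} and \eqref{Green}.
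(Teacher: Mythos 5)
Your proposal is correct and follows essentially the same route as the paper's proof: apply \eqref{I1}, substitute \eqref{oneMgrad2a}, embed into the $\Omega\times\partial\Omega$ inner product with zero boundary component, and use the Green identity \eqref{Green} to pass $\mathscr{S}_{\bm,\omega}$ to its adjoint, recognising the result as $\lambda$ from \eqref{lambda}. Your additional remark on interpreting the pairing with the point-supported distribution $\mathcal{R}(\cP)^*\beps$ in a duality sense is a sensible observation that the paper leaves implicit.
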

\begin{proof}
Here we adopt the convention in Notation \ref{not:conv} and  use \eqref{I1} to write \eqref{graddirect_here} as 
\begin{align}
\frac{\partial \phi }{\partial m_k}(\bm, \cP, \alpha) \ & = \ - \Re  \left\langle \mathcal{R}(\cP) \frac{\partial u }{\partial m_k}(\bm) , \beps(\bm, \cP) \right\rangle + \Gamma(\alpha, \mu)\bm  \nonumber \\
 \ &= \ - \Re \left(\frac{\partial u }{\partial m_k}(\bm) , \mathcal{R}(\cP)^* \beps(\bm, \cP) \right)_\Omega + \Gamma(\alpha, \mu) \bm.    
\label{I2}  
\end{align}
Using  \eqref{oneMgrad2a}, the first term on the right-hand side of \eqref{I2} becomes
\begin{align}
 & - 
                                                     \Re \bigg( (1,0) \, \mathscr{S}_{\bm, \omega} \cG_{\bm,\omega} (\beta_k u(\bm)), \,  
    \mathcal{R}(\cP)^* \beps(\bm, \cP) \bigg)_\Omega \nonumber \\
  & \quad \quad 
   = \  - 
    \Re \left( \mathscr{S}_{\bm, \omega} \cG_{\bm,\omega} (\beta_k u(\bm)), \,  
  \left(\begin{array}{l}
    \mathcal{R}(\cP)^* \beps(\bm, \cP)  \\ 0\end{array}\right)\right)_{\Omega\times \partial \Omega}\nonumber \\
  & \quad \quad = \  - 
      \Re \left( \cG_{\bm,\omega} (\beta_k u(\bm)), \,   \lambda(\bm, \cP) \right)_{\Omega\times \partial \Omega},
  \label{I5}
\end{align}
where we used \eqref{Green}. Combining \eqref{I2} and \eqref{I5}  yields the result.
\end{proof}
Thus, given  $\bm, \cP, \alpha, \bm'$  and assuming  $u(\bm',s,\omega)$  is known for all $s, \omega$, to find  $\nabla \phi$, we need only two Helmholtz  solves for each  $s$ and $\omega$, namely a  
solve to obtain $u(\bm, \omega,s)$ and an  adjoint
solve to obtain $\lambda(\bm, \cP, \omega, s, \bm')$.
 Algorithm  \ref{alg:box1}  presents the steps involved. 

\smallskip 
\begin{algorithm}[h!]
\setstretch{1.15}
\caption{Algorithm for Computing $\phi, \ \nabla \phi$} 
\begin{algorithmic}[1]
\State \textit{Inputs:}  $\cP$, 
     $\mathcal{W}$,  $\cS$, $\bm, \bm'$, 
       $\alpha, \mu$ and $\textbf{u}(\bm', s, \omega) = (1,0)\mathscr{S}_{\bm', \omega}(\delta_s)$
\State \textbf{for} $\omega \in \mathcal{W}$, $s \in \mathcal{S}$ \textbf{do}
\State \indent Compute the primal wavefield   $\textbf{u}(\bm, s, \omega) = (1,0)\mathscr{S}_{\bm, \omega}(\delta_s)$
\State \indent Compute $\beps(\bm, \cP, \omega,s,\bm')$ by \eqref{resdbi}
\State \indent Compute the adjoint wavefield  $\boldsymbol{\lambda}(\bm,\cP, \omega,s, \bm')$ from \eqref{lambda}
\State \textbf{end for}
\State Compute $\phi$ from \eqref{newphi}, \eqref{resdbi}. 
\State Compute $\nabla \phi$ from \eqref{I4}
\end{algorithmic}
\label{alg:box1}
\end{algorithm}

\medskip

\noindent{\bf Computing the gradient. } \  
  Using the discretisation scheme in Section \ref{subsec:num_for} and  restricting to the computation of a single term in  the sum on the right-hand side,  the numerical analogue of \eqref{I4} is as follows.
  
The vector $\blambda \in \mathbb{C}^M$ representing $\lambda(\bm, \cP, \omega, s, \bm')$ is computed by solving
$$ A(\bm, \omega)^* \blambda = \sum_{\bp\in \cP} \eps_{\bp}\bfe_{\bp}  \ , \quad \text{where} \quad \eps_{\bp} = (R(\cP) \left( \bu(\bm',\omega,s) - \bu(\bm, \omega,s)\right))_{\bp}, $$
and $R(\cP)$ denotes the sliding cubic approximation (as described in Section \ref{subsec:restrict}) and $\bfe_{\bp}$ represents the numerical realisation of the delta function situated at $\bp$. Then the inner product in \eqref{I4} is approximated by   \begin{align*}
&-\Re \left(\omega^2 d_k u_k \overline{\lambda_k}\right), &\text{if} \quad 
k \quad \text{is an interior node},\\
&- \Re \left( (\omega^2 d_k u_k  + b_k \frac{\ri \omega}{2 \sqrt{m_k}} u_k) \overline{\lambda_k}\right),   &\text{if} \quad k \quad \text{is a boundary node}. 
  \end{align*}

\subsection{ Matrix-vector multiplication with the Hessian}
\label{subsec:MVHess} Differentiating \eqref{graddirect_here} with respect to $m_j$,  we  obtain the    
 Hessian $H$  of $\phi$,  
\begin{align*}
  H(\bm, \cP, \alpha, \bm') =H^{(1)}(\bm, \cP)+H^{(2)}(\bm, \cP,\bm')+ \Gamma(\alpha, \mu), 
\end{align*}
with $H^{(1)}$ and $H^{(2)}$ defined by
\begin{align}
\left(H^{(1)}(\bm, \cP)\right)_{j, k}&\ =\ \Re \sum_{s \in \mathcal{S}} \sum_{\omega \in \mathcal{W}}  \left\langle \mathcal{R}(\cP) \frac{\partial u }{\partial m_j}(\bm, \omega, s) \, , \,    \mathcal{R}(\cP) \frac{\partial u }{\partial m_{k}}(\bm, \omega, s) \right \rangle , 
\label{h1}\\
\left(H^{(2)}(\bm, \cP, \bm')\right)_{j,k}&\ =\ - \Re \sum_{s \in \mathcal{S}} \sum_{\omega \in \mathcal{W}}    \left\langle  \mathcal{R}(\cP) \frac{\partial^2 u }{\partial m_k\partial m_{j} }(\bm, \omega, s)\, , \,  \beps(\bm, \cP, \omega, s, \bm') \right\rangle . 
\label{h2}
\end{align}
Observe that  $H^{(1)}$ is symmetric  positive semidefinite, while 
                                            $H^{(2)}$ is symmetric  but possibly indefinite.
                                            
In the following two lemmas we obtain efficient formulae for computing $H^{(1)} \tbm$ and $H^{(2)} \tbm$   for any $\tbm \in \mathbb{R}^M$. These make use of `adjoint state' arguments. Analogous formulae in the discrete case are given in \cite{metivier2017full}.                   Before we begin, for any  $\tbm=(\tm_1,\dots,\tm_M) \in \mathbb{R}^M$,
we define 
\begin{align} \label{deftm} \tm = \sum_k \tm_k \beta_k. \end{align}                              
 
  \begin{lemma}[Adjoint-state formula for multiplication by $H^{(1)}(\bm, \cP)$]\label{thm:H1}\ \ 
    For any $\bm, \bm' \in \mathbb{R}^M$, $\omega \in \mathcal{W}$ and $s \in \cS$, let
    $u(\bm, \omega,s)$ be the wavefield defined by \eqref{defu},   and set 
\begin{align} \label{defv}  v(\bm, \omega, s, \tbm) = (1 ,0)\,  \mathscr{S}_{\bm, \omega}\,  \cG_{\bm, \omega} \left(\begin{array}{l} \tm u(\bm, \omega,s)\\ \tm u(\bm, \omega,s)\vert_{\partial \Omega}\end{array}  \right) .  \end{align} Then, for each 
$j = 1,\ldots,M$, 
\begin{align} 
    (H^{(1)} (\bm, \cP, \bm') \tbm)_j & \quad =     \Re  \sum_{s \in \mathcal{S}}
 \sum_{\omega \in \mathcal{W}}  \left(\cG_{\bm, \omega}\left(\begin{array}{l} \beta_j u(\bm, \omega, s) \\ \beta_j u(\bm, \omega, s)\vert_{\partial \Omega} \end{array} \right), \mathscr{S}_{\bm, \omega}^* \left(\begin{array}{l}\cR(\cP)^* \cR(\cP) v(\bm,  \omega,s, \tbm)\\0\end{array} \right)  \right)_{\Omega\times \partial \Omega} . 
 \label{H1v}
 \end{align}
    \end{lemma}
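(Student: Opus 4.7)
The plan is to  compute the action $(H^{(1)}(\bm,\cP)\tbm)_j$ directly from the definition \eqref{h1}, then successively rewrite it using the three main ingredients already established in the excerpt: the derivative formula for $u$ in \eqref{oneMgrad2a}–\eqref{oneMgrad2b}, the adjoint relation \eqref{I1} for $\cR(\cP)$, and Green's identity \eqref{Green} for the solution operator.

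First I would compute
\begin{align*}
(H^{(1)}(\bm,\cP)\tbm)_j \;=\; \Re\sum_{s\in\cS}\sum_{\omega\in\mathcal{W}} \left\langle \cR(\cP)\frac{\partial u}{\partial m_j}(\bm,\omega,s),\; \cR(\cP)\sum_{k=1}^M \tm_k\,\frac{\partial u}{\partial m_k}(\bm,\omega,s)\right\rangle.
\end{align*}
Then, using \eqref{oneMgrad2b} from Remark \ref{rem:note} with $\bsigma = \tbm$, the inner sum collapses to the function $v(\bm,\omega,s,\tbm)$ defined in \eqref{defv}. So the expression becomes
\begin{align*}
(H^{(1)}\tbm)_j \;=\; \Re\sum_{s,\omega}\Big\langle \cR(\cP)\tfrac{\partial u}{\partial m_j}(\bm,\omega,s),\; \cR(\cP) v(\bm,\omega,s,\tbm)\Big\rangle.
\end{align*}

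Next I would apply the adjoint identity \eqref{I1} to rewrite the finite-dimensional inner product as an $L^2(\Omega)$ inner product, which produces $\bigl(\partial u/\partial m_j,\, \cR(\cP)^*\cR(\cP)v\bigr)_\Omega$. I then embed this into the product space $L^2(\Omega)\times L^2(\partial\Omega)$ by appending a zero on the boundary component, and substitute the explicit formula \eqref{oneMgrad2a} for the first argument, which introduces $\mathscr{S}_{\bm,\omega}\cG_{\bm,\omega}$ acting on the pair $(\beta_j u,\beta_j u|_{\partial\Omega})^\top$.

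The final step is to invoke Green's identity \eqref{Green} to move $\mathscr{S}_{\bm,\omega}$ across the inner product, turning it into the adjoint solution operator $\mathscr{S}_{\bm,\omega}^*$ acting on the right-hand factor $(\cR(\cP)^*\cR(\cP) v,0)^\top$; this yields exactly \eqref{H1v}. The only mildly tricky bookkeeping is keeping the product-space structure  $L^2(\Omega)\times L^2(\partial\Omega)$ consistent when passing from scalar pairings to the one with $\mathscr{S}_{\bm,\omega}$, and in particular ensuring that the padding by zero on $\partial\Omega$ is the correct one so that Green's identity applies cleanly. Apart from this, the argument is a direct chain of substitutions and should present no deeper obstacle.
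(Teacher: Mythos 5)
Your proposal is correct and follows essentially the same route as the paper's proof: express the $k$-sum $\sum_k \tm_k\,\partial u/\partial m_k$ as the single wavefield $v$ (the paper re-derives this as its equation \eqref{I8}, which is just Remark \ref{rem:note} applied with $\bsigma=\tbm$), then apply the adjoint identity \eqref{I1}, substitute \eqref{oneMgrad2a}, and finally move $\mathscr{S}_{\bm,\omega}$ across the product-space inner product via Green's identity \eqref{Green} exactly as in the derivation of \eqref{I5}. No substantive difference.
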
 

                                      \begin{proof}
                                        Using the convention in Notation \ref{not:conv},
                                        the definition of $\tm$,  the linearity of $\mathscr{S}_{\bm, \omega}$  and $\cG_{\bm, \omega}$ and then   \eqref{oneMgrad2a}, we can write  
                                        \begin{align}  v(\bm, \tbm) = (1,0) \, \sum_{k}  \,  \mathscr{S}_{\bm,\omega} \, \cG_{\bm, \omega}  \left( \begin{array}{l} \beta_k u(\bm)  \\ \beta_k u(\bm)\vert_{\partial \Omega}\end{array} \right) \, \tm_{k} 
                                          = \sum_{k}  \frac{\partial u }{\partial m_k}(\bm)\, \tm_{k}.         \label{I8} 
    \end{align}
Then, using \eqref{h1}, \eqref{I8}, and then \eqref{I1},     we obtain
 \begin{align*}
   (H^{(1)}(\bm, \cP, \bm') \tbm)_j &= \Re \sum_k \left\langle \mathcal{R}(\cP) \frac{\partial u }{\partial m_j}(\bm)\, ,\,    \mathcal{R}(\cP) \frac{\partial u }{\partial m_{k}}(\bm) \right \rangle  \tm_k\\
                                    &= \Re  \left\langle \mathcal{R}(\cP) \frac{\partial u }{\partial m_j}(\bm) \, ,   \,  \mathcal{R}(\cP) v(\bm, \bm')\right\rangle = \Re  \left( \frac{\partial u }{\partial m_j}(\bm) \, ,   \,  \mathcal{R}(\cP)^* \mathcal{R}(\cP) v(\bm, \bm')\right)_{\Omega}.
 \end{align*}
 Then substituting for $\partial u / \partial m_j $ using \eqref{oneMgrad2a}, and  proceeding analogously to  \eqref{I5}, we have
   \begin{align*}
     (H^{(1)}(\bm, \cP, \bm') \tbm)_j & = \Re
    \left( 
    \cG_{\bm, \omega}\left(\begin{array}{l} \beta_j u(\bm)\\ \beta_j u(\bm) \vert_{\partial \Omega}\end{array}  \right) \, , \, \mathscr{S}_{m,\omega}^* \left(\begin{array}{l} \mathcal{R}(\cP)^* \mathcal{R}(\cP) v(\bm, \bm')\\
      0 \end{array} \right) \right)_{\Omega \times \partial \Omega}.  
                            \end{align*}
  Recalling Notation \ref{not:conv}, this completes the proof. 
\end{proof}

    This lemma shows that (for each $\omega,s$) 
   computing $H^{(1)}\tbm$ requires only three Helmholtz solves, namely those required to compute  $u$, $v$  and, in addition,  the second argument in the inner products  \eqref{H1v}. 
    Computing $H^{(2)}\tbm$
     is a bit more complicated.  For this we need the following  formula for the second  derivatives of $u$ with respect to the model.  
       \begin{align} 
    \left( \begin{array}{l} \frac{\partial^2 u}{\partial m_j \partial m_k}\\ \frac{\partial^2 u}{\partial m_j \partial m_k}\vert_{\partial \Omega}\end{array} \right)  & =    \mathscr{S}_{\bm, \omega} \, \left[ \cG_{\bm, \omega} \left(\begin{array}{l} u_{j,k}\\ u_{j,k}\vert_{\partial \Omega} \end{array} \right) - \left(
\begin{array}{l} 0\\ \left(\frac{\ri \omega}{4 m^{3/2}}\right) \beta_j\beta_k u\vert_{\partial \Omega} \end{array} \right)\right],                                                                                             
  \label{Jac_exp} 
  \end{align}
  where   \begin{align}\label{defujk}  u_{j,k} (\bm,\omega, s)=  \beta_j \frac{\partial u}{\partial m_k}(\bm,\omega, s) + \beta_k \frac{\partial u}{\partial m_j}(\bm,\omega, s).  \
                                                           \end{align}
The formulae \eqref{Jac_exp}, \eqref{defujk} are  obtained by writing out \eqref{oneMgrad2a} explicitly and differentiating with respect to $m_j$.   
Analogous second derivative  terms appear  in, e.g., \cite{pratt1998gauss} and \cite{metivier2012second}. However these are presented  in the context of a forward problem consisting of solution of a linear algebraic system and so the detail of the  PDEs being  solved at each step is less explicit than here.  
  \begin{lemma}[Adjoint-state formula for $H^{(2)}(\bm, \cP, \bm') \,  \tbm$]\label{thm:H2}
\noindent For each  $j = 1, \ldots , M$, 
  \begin{align}  
    \left(H^{(2)}(\bm,\cP,\bm') \tbm \right)_{j}           & = -  \Re
                                      \sum_{s \in \mathcal{S}} \sum_{\omega \in \mathcal{W}}
                                                             \left[     \left(\cG_{\bm,\omega} \left(\begin{array}{l}\beta_j v(\bm, \omega,s,\bm')\\ \beta_j v(\bm, \omega,s,\bm') \vert_{\partial \Omega} \end{array} \right), \lambda(\bm, \cP,\omega,s,\bm')\right)_{\Omega \times \partial \Omega}\right. \nonumber 
    \\
&     \quad \quad \quad \quad +
         \left.                                                       \left(\cG_{\bm,\omega}
         \left(\begin{array}{l} \beta_j u(\bm, \omega,s) \\ \beta_j u(\bm, \omega,s)\vert_{\partial \Omega} \end{array} \right), \mathscr{S}_{\bm,\omega}^* \left( \tm \cG_{\bm, \omega} \lambda(\bm, \cP,\omega,s,\bm')\right) \right)_{\Omega \times \partial \Omega}\right. \nonumber \\
    &  \quad \quad \quad \quad \left. -  \left( \frac{\ri \omega}{4 m^{3/2}} \beta_j \tm u (\bm, \omega,s) \, , \, \lambda(\bm, \cP, \omega,s,\bm')  \right)_{\partial \Omega}  \right],
 \label{I6}
 \end{align}  
 where $\lambda$ is defined in (\ref{lambda}) and $v$ is defined in \eqref{defv}. 
\end{lemma}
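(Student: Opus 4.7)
The plan is to follow the same adjoint-state template as used in the proof of Lemma \ref{thm:H1}, but applied to the second-derivative formula \eqref{Jac_exp}. Specifically, I would start from the definition \eqref{h2} of $H^{(2)}_{j,k}$, multiply by $\tm_k$ and sum over $k$, and then use \eqref{I1} to lift the Euclidean inner product involving $\mathcal{R}(\cP)$ to an $L^2(\Omega)$ inner product with $\mathcal{R}(\cP)^*\beps$. Following the pattern in the proof of Theorem \ref{th:grad}, I would then pair this $L^2(\Omega)$ entry with a zero on $\partial\Omega$ and write everything as an $\Omega\times\partial\Omega$ inner product so that \eqref{Green} can be invoked.

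Next, I would apply formula \eqref{Jac_exp} for $\partial^2 u/(\partial m_j\partial m_k)$ (summed against $\tm_k$) to get an expression of the form $\mathscr{S}_{\bm,\omega}\bigl[\cG_{\bm,\omega}(\sum_k \tm_k u_{j,k},\ \sum_k \tm_k u_{j,k}|_{\partial\Omega})^\top - (0,\ \frac{\ri\omega}{4m^{3/2}}\beta_j\bigl(\sum_k \tm_k\beta_k\bigr)u|_{\partial\Omega})^\top\bigr]$. The key algebraic simplification is the evaluation of $\sum_k \tm_k u_{j,k}$: using \eqref{defujk}, this splits as $\beta_j \sum_k \tm_k(\partial u/\partial m_k) + (\sum_k \tm_k\beta_k)(\partial u/\partial m_j)$, and the first sum is exactly $\beta_j v(\bm,\omega,s,\tbm)$ by \eqref{I8} (equivalently, Remark \ref{rem:note} combined with \eqref{defv}), while $\sum_k \tm_k\beta_k = \tm$ by \eqref{deftm}. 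Thus the bracketed argument of $\mathscr{S}_{\bm,\omega}$ becomes the sum of three recognisable pieces: $\cG_{\bm,\omega}(\beta_j v, \beta_j v|_{\partial\Omega})^\top$, $\cG_{\bm,\omega}(\tm\,\partial u/\partial m_j,\tm\,\partial u/\partial m_j|_{\partial\Omega})^\top$, and the boundary correction $-(0,\frac{\ri\omega}{4m^{3/2}}\beta_j \tm u|_{\partial\Omega})^\top$.

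At this point I would apply \eqref{Green} to move $\mathscr{S}_{\bm,\omega}$ across as $\mathscr{S}^*_{\bm,\omega}$ acting on $(\mathcal{R}(\cP)^*\beps,0)^\top$, which by the definition \eqref{lambda} is exactly $\lambda(\bm,\cP,\omega,s,\bm')$. The first piece yields the first line of \eqref{I6}, the boundary-correction piece becomes the final surface integral in \eqref{I6} (its $\mathscr{S}_{\bm,\omega}$ similarly moves across to produce $\lambda$, leaving only its boundary component), and the third piece is precisely the second line of \eqref{I6}. The main obstacle, and the step requiring most care, is the rewriting of the middle piece: I would substitute $(\partial u/\partial m_j,\partial u/\partial m_j|_{\partial\Omega})^\top = \mathscr{S}_{\bm,\omega}\cG_{\bm,\omega}(\beta_j u,\beta_j u|_{\partial\Omega})^\top$ from \eqref{oneMgrad2a}, then exploit the fact that $\tm$ is a real-valued scalar function (so it commutes with $\cG_{\bm,\omega}$ componentwise, and can be moved across the inner product) to convert $(\cG_{\bm,\omega}(\tm\,\partial u/\partial m_j,\cdot)^\top,\lambda)_{\Omega\times\partial\Omega}$ into $(\cG_{\bm,\omega}(\beta_j u,\beta_j u|_{\partial\Omega})^\top,\mathscr{S}^*_{\bm,\omega}(\tm\,\cG_{\bm,\omega}\lambda))_{\Omega\times\partial\Omega}$ via a final application of \eqref{Green}. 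Taking $-\Re$ of the three resulting pieces and restoring the sums over $s\in\mathcal{S}$ and $\omega\in\mathcal{W}$ gives \eqref{I6}.
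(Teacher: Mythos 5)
Your proposal follows essentially the same route as the paper's proof: lifting \eqref{h2} to an $\Omega\times\partial\Omega$ inner product via \eqref{I1}, substituting \eqref{Jac_exp}, using the key identity $\sum_k \tm_k u_{j,k} = \beta_j v + \tm\,\partial u/\partial m_j$ (the paper's \eqref{I11}), invoking \eqref{Green} to produce $\lambda$, and treating the middle term with \eqref{oneMgrad2a} plus one further application of \eqref{Green}. The only point worth tightening is your remark that $\tm$ ``commutes with $\cG_{\bm,\omega}$ and can be moved across the inner product'': the boundary component of $\cG_{\bm,\omega}$ is multiplication by $\ri\omega/(2\sqrt{m})$, so moving it across formally produces $\cG^*_{\bm,\omega}$ rather than $\cG_{\bm,\omega}$ (as in the paper's own intermediate step \eqref{three_terms}), and this should be reconciled with the unstarred $\cG_{\bm,\omega}$ appearing in \eqref{I6}.
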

 \begin{proof}
   Using Notation \ref{not:conv}  
 and  \eqref{I1}, we can write $(H^{(2)}(\bm, \cP))_{j,k}$ in (\ref{h2}) as 
\begin{align}
\left(H^{(2)}(\bm, \cP)\right)_{j,k}=  - \Re  \left\langle  \mathcal{R}(\cP) \frac{\partial^2 u }{\partial m_k\partial m_{j} }(\bm),  \beps(\bm, \cP) \right\rangle   
& =  - \Re
           \left( \frac{\partial^2 u}{\partial m_k \partial m_{j}} (\bm) , \mathcal{R}(\bp)^* \beps (\bm, \cP) \right)_\Omega . 
           \label{I7}
\end{align}
Then, substituting (\ref{Jac_exp}) into \eqref{I7} and recalling \eqref{lambda}, we obtain
\begin{align}  
 (H^{(2)} (\bm, \cP) )_{j,k}        & = - \Re \left( \cG_{\bm, \omega} \left(\begin{array}{l} u_{j,k}\\ u_{j,k}\vert_{\partial \Omega} \end{array} \right) - \left(
\begin{array}{l} 0\\ \left(\frac{\ri \omega}{4 m^{3/2}}\right) \beta_j\beta_k u\end{array} \right)\, , \, \lambda(\bm, \cP)    \right)_{\Omega \times \partial \Omega} .  \label{I10}  
\end{align}
Before proceeding with \eqref{I10} we first note that,  by \eqref{defujk}, \eqref{deftm}, and then \eqref{I8}, we have     
\begin{align}\label{I11}
  \sum_k u_{j,k} \tm_k = \beta_j \sum_k \frac{\partial u}{\partial m_k} (\bm) \tm_k  + \tm \frac{\partial u} {\partial m_j}(\bm) 
  = \ \beta_j v(\bm, \bm')  + \tm \frac{\partial u}{\partial m_j} (\bm) . 
  \end{align}
Then,  by \eqref{I10}, \eqref{I11} and linearity of $\cG_{\bm, \omega}$,
\begin{align}   (H^{(2)} (\bm, \cP)\tbm)_{j} 
                  \ & = \  - \Re \left(\cG_{\bm,\omega} \left(\begin{array}{l} \beta_j v(\bm, \bm')\\ \beta_j v(\bm, \bm')\vert_{\partial \Omega} \end{array}  \right) , \lambda(\bm, \cP)\right)_{\Omega \times \partial \Omega}\nonumber  \\
                    & \quad  - \Re \left(\cG_{\bm, \omega} \left(\begin{array}{l} \tm \frac{\partial u}{\partial m_j}(\bm)\\\tm \frac{\partial u}{\partial m_j}(\bm)\vert_{\partial \Omega}\end{array}\right) \, , \, \lambda(\bm, \cP) \right)_{\Omega \times \partial \Omega}\nonumber  \\
  & \quad + \Re \left(\left( \begin{array}{l} 0\\ \frac{\ri \omega}{4 m^{3/2}} \beta_j\tm u\vert_{\partial \Omega} \end{array} \right) \, , \, \lambda(\bm, \cP)\right)_{\Omega \times \partial \Omega}.
\label{three_terms}\end{align}
The first and third terms in \eqref{three_terms} correspond to the first and third terms
in \eqref{I6}. The second term in \eqref{three_terms} can be written
\begin{align*}
- \Re \left( \left(\begin{array}{l} \frac{\partial u}{\partial m_j}(\bm) \\ \frac{\partial u}{\partial m_j}(\bm)\vert_{\partial \Omega} \end{array} \right)\, , \, \tm \cG_{\bm,\omega}^* \lambda(\bm, \cP)\right)_{\Omega \times \partial \Omega}
= - \Re \left( \mathscr{S}_{\bm, \omega} \cG_{\bm, \omega} \left(\begin{array}{l} \beta_k u(\bm) \\ \beta_k u (\bm)\vert_{\partial \Omega} \end{array} \right)\, , \, \tm \cG_{\bm,\omega}^* \lambda(\bm, \cP)\right)_{\Omega \times \partial \Omega}
\end{align*}
(where we also used \eqref{oneMgrad2a}), and this corresponds to the second term in \eqref{I6}, completing the proof.   
 \end{proof} 
                         
\par                       As noted in \cite[Section 3.3]{metivier2017full}, the solution of a Hessian system with a matrix-free conjugate
gradient algorithm requires the solution to $2 + 2 N_i$ PDEs, where $N_i$ is the number of conjugate gradient iterations performed.

                         \begin{discussion}[Cost of matrix-vector multiplication with $H$]
\label{disc:Hess}                           Lemmas \ref{thm:H1} and \ref{thm:H2} show that, to compute the product $H(\bm, \cP, \bm') \tbm$ for any $\tbm \in \mathbb{R}^M$ the Helmholtz solves required are (i) computation of $u$ in \eqref{defu}, (ii) computation of $\lambda $ in \eqref{lambda}, (iii) computation of $v$ in \eqref{defv}, and finally (iv) the more-complicated adjoint solve
\begin{align*}
z := \mathscr{S}_{\bm, \omega}^* \left[
 \left( \begin{array}{l} \cR(\cP)^*\cR(\cP) v(\bm,\omega,s,\tbm)\\0 \end{array} \right) -
   \tm \cG_{\bm, \omega} \lambda(\bm, \cP,\omega,s,\bm') 
  \right] 
\end{align*}
The remainder of the calculations in \eqref{H1v} and \eqref{I6} require only inner products and no Helmholtz solves. 
                         \end{discussion}

\end{document}